\newtheorem{theorem}{Theorem}
\newtheorem{lemma}[]{Lemma}
\newtheorem{example}[]{Example}
\newtheorem{proposition}[lemma]{Proposition}
\newtheorem{corollary}[theorem]{Corollary}
\DeclareMathOperator*{\argmin}{argmin}
\DeclareMathOperator*{\sargmin}{sargmin}
\begin{document}
	
\title{Classification with imperfect training labels}
\author{Timothy I. Cannings$^\ast$, Yingying Fan$^\dag$ and Richard J. Samworth$^\ddag$ \\  \textit{$^\ast$University of Edinburgh, $^\dag$University of Southern California} \\ and \textit{$^\ddag$University of Cambridge}}
	\date{}
	\maketitle
	\begin{abstract}
	We study the effect of imperfect training data labels on the performance of classification methods.   In a general setting, where the probability that an observation in the training dataset is mislabelled may depend on both the feature vector and the true label, we bound the excess risk of an arbitrary classifier trained with imperfect labels in terms of its excess risk for predicting a noisy label.  This reveals conditions under which a classifier trained with imperfect labels remains consistent for classifying uncorrupted test data points.  Furthermore, under stronger conditions, we derive detailed asymptotic properties for the popular $k$-nearest neighbour ($k$nn), support vector machine (SVM) and linear discriminant analysis (LDA) classifiers.  One consequence of these results is that the $k$nn and SVM classifiers are robust to imperfect training labels, in the sense that the rate of convergence of the excess risks of these classifiers remains unchanged; in fact, our theoretical and empirical results even show that in some cases, imperfect labels may improve the performance of these methods.  On the other hand, the LDA classifier is shown to be typically inconsistent in the presence of label noise unless the prior probabilities of each class are equal.  Our theoretical results are supported by a simulation study.
	\end{abstract}

\section{Introduction}
Supervised classification is one of the fundamental problems in statistical learning.  In the basic, binary setting, the task is to assign an observation to one of two classes, based on a number of previous training observations from each class.   Modern applications include, among many others, diagnosing a disease using genomics data \citep{Wright:2015}, determining a user's action from smartphone telemetry data \citep{Lara:2013}, and detecting fraud based on historical financial transactions \citep{Bolton:2002}. 

In a classification problem it is often the case that the class labels in the training data set are inaccurate.  For instance, an error could simply arise due to a coding mistake when the data were recorded.  In other circumstances, such as the disease diagnosis application mentioned above, errors may occur due to the fact that, even to an expert, the true labels are hard to determine, especially if there is insufficient information available.  Moreover, in modern Big Data applications with huge training data sets, it may be impractical and expensive to determine the true class labels, and as a result the training data labels are often assigned by an imperfect algorithm.  Services such as the \emph{Amazon Mechanical Turk}, (see~\url{https://www.mturk.com}), allow practitioners to obtain training data labels relatively cheaply via crowdsourcing.  Of course, even after aggregating a large crowd of workers' labels, the result may still be inaccurate.  \citet{Chen:2015} and \citet{Zhang:2016} discuss crowdsourcing in more detail, and investigate strategies for obtaining the most accurate labels given a cost constraint.  

The problem of label noise was first studied by \citet{Lachenbruch:1966}, who investigated the effect of imperfect labels in two-class linear discriminant analysis.  Other early works of note include \citet{Lachenbruch:1974}, \citet{Angluin:1988} and \citet{Lugosi:1992}.  

\citet{Frenay:2014a} and \citet{Frenay:2014b} provide recent overviews of work on the topic. In the simplest, homogeneous setting, each observation in the training dataset is mislabelled independently with some fixed probability.  \citet{vanRooyen:2015a} study the effects of homogeneous label errors on the performance of empirical risk minimization (ERM) classifiers, while \citet{Long:2010} consider boosting methods in this same homogeneous noise setting.  Other recent works focus on class-dependent label noise, where the probability that a training observation is mislabelled depends on the true class label of that observation;  see \citet{Stempfel:2009}, \citet{Natarajan:2013}, \citet{Scott:2013}, \citet{Blanchard:2016}, \citet{Liu:2016} and \citet{Patrini:2016}.  An alternative model assumes the noise rate depends on the feature vector of the observation.  \citet{Manwani:2013} and \citet{Ghosh:2015} investigate the properties of ERM classifiers in this setting; see also \citet{Awasthi:2015}.  \citet{Menon:2016} propose a \textit{generalized boundary consistent} label noise model, where observations near the optimal decision boundary are more likely to be mislabelled, and study the effects on the properties of the receiver operator characteristics curve. 

In the more general setting, where the probability of mislabelling is both feature- and class-dependent, \citet{Bootkrajang:2012,Bootkrajang:2014} and \citet{Bootkrajang:2016} study the effect of label noise on logistic regression classifiers, while \citet{Li:2017}, \citet{Patrini:2017} and \citet{Rolnick:2017} consider neural network classifiers.   On the other hand, \citet{Cheng:2017} investigate the performance of an ERM classifier in the feature- and class-dependent noise setting when the true class conditional distributions have disjoint support.  

Our first goal in the present paper is to provide general theory to characterize the effect of feature- and class-dependent heterogeneous label noise for an arbitrary classifier.  We first specify general conditions under which the optimal prediction of a true label and a noisy label are the same for every feature vector.  Then, under slightly stronger conditions, we relate the misclassification error when predicting a true label to the corresponding error when predicting a noisy label.  More precisely, we show that the excess risk, i.e.~the difference between the error rate of the classifier and that of the optimal, Bayes classifier, is bounded above by the excess risk associated with predicting a noisy label multiplied by a constant factor that does not depend on the classifier used; see Theorem~\ref{thm:hetnoise}.  Our results therefore provide conditions under which a classifier trained with imperfect labels remains consistent for classifying uncorrupted test data points.

As applications of these ideas, we consider three popular approaches to classification problems, namely the $k$-nearest neighbour ($k$nn), support vector machine (SVM) and linear discriminant analysis (LDA) classifiers.   In the perfectly labelled setting, the $k$nn classifier is consistent for any data generating distribution and the SVM classifier is consistent when the distribution of the feature vectors is compactly supported.  Since the label noise does not change the marginal feature distribution, it follows from our results mentioned in the previous paragraph that these two methods are still consistent when trained with imperfect labels that satisfy our assumptions, which, in the homogeneous noise case, even allow up to 1/2 of the training data to be labelled incorrectly.  On the other hand, for the LDA classifier with Gaussian class-conditional distributions, we derive the asymptotic risk in the homogeneous label noise case.  This enables us to deduce that the LDA classifier is typically not consistent when trained with imperfect labels, unless the class prior probabilities are equal to 1/2.

Our second main contribution is to provide greater detail on the asymptotic performance of the $k$nn and SVM classifiers in the presence of label noise, under stronger conditions on the data generating mechanism and noise model.  In particular, for the $k$nn classifier, we derive the asymptotic limit for the ratio of the excess risks of the classifier trained with imperfect and perfect labels, respectively.   This reveals the nice surprise that using imperfectly-labelled training data can in fact improve the performance of the $k$nn classifier in certain circumstances. To the best of our knowledge, this is the first formal result showing that label noise can help with classification.  For the SVM classifier, we provide conditions under which the rate of convergence of the excess risk is unaffected by label noise, and show empirically that this method can also benefit from label noise in some cases. 

In several respects, our theoretical analysis acts a counterpoint to the folklore in this area.  For instance, \citet{Okamoto:1997} analysed the performance of the $k$nn classifier in the presence of label noise.  They considered relatively small problem sizes and small values of~$k$, where the $k$nn classifier performs poorly when trained with imperfect labels; on the other hand, our Theorem~\ref{thm:knnhet} reveals that for larger values of $k$, which diverge with $n$, the asymptotic effect of label noise is relatively modest, and may even improve the performance of the classifier.   As another example, \citet{Manwani:2013} and \citet{Ghosh:2015} claim that SVM classifiers perform poorly in the presence of label noise; our Theorem~\ref{thm:SVMhet} presents a different picture, however, at least as far as the rate of convergence of the excess risk is concerned.  Finally, in two-class Gaussian discriminant analysis, \citet{Lachenbruch:1966} showed that LDA is robust to homogeneous label noise when the two classes are equally likely \citep[see also][Section III-A]{Frenay:2014b}. We observe, though, that this robustness is very much the exception rather than the rule: if the prior probabilities are not equal, then the LDA classifier is almost invariably not consistent when trained with imperfect labels; cf.~Theorem~\ref{thm:LDAhomo}.

Although it is not the focus of this paper, we mention briefly that another line of work on label noise investigates techniques for identifying mislabelled observations and either relabelling them, or simply removing them from the training data set. Such methods are sometimes referred to as data \textit{cleansing} or \textit{editing} techniques;  see for instance \citet{Wilson:1972}, \citet{Wilson:2000} and \citet{Cheng:2017}; as well as \citet[Section~3.2]{Frenay:2014a}, who provide a general overview of popular methods for editing training data sets.  Other authors focus on estimating the noise rates and recovering the clean class-conditional distributions  \citep{Blanchard:2016,Northcutt:2017}. 

The remainder of this paper is organized as follows.  In Section~\ref{sec:setting} we introduce our general statistical setting, while in Section~\ref{sec:finite}, we present bounds on the excess risk of an arbitrary classifier trained with imperfect labels under very general conditions.  In Section~\ref{sec:asymptotic}, we derive the asymptotic properties of the $k$nn, SVM and LDA classifiers when trained with noisy labels.  Our empirical experiments, given in Section~\ref{sec:sims}, show that this asymptotic theory corresponds well with finite-sample performance. Finally in the appendix we present the proofs underpinning our theoretical results, as well as an illustrative example involving the 1-nearest neighbour classifier.

The following notation is used throughout the paper.  We write $\|\cdot\|$ for the Euclidean norm on $\mathbb{R}^d$, and for $r > 0$ and $z \in \mathbb{R}^d$, write $B_z(r) = \{x \in \mathbb{R}^d:\|x-z\| < r\}$ for the open Euclidean ball of radius $r$ centered at $z$, and let $a_d = \pi^{d/2}/\Gamma(1+d/2)$ denote the $d$-dimensional volume of $B_0(1)$.  If $A \in \mathbb{R}^{d \times d}$, we write $\|A\|_{\mathrm{op}}$ for its operator norm.  For a sufficiently smooth real-valued function $f$ defined on $D \subseteq \mathbb{R}^m$, and for $x \in D$, we write $\dot{f}(x) = (f_1(x),\ldots,f_m(x))^T$ and $\ddot{f}(x) = (f_{jk}(x))_{j,k=1}^m$ for its gradient vector and Hessian matrix at $x$ respectively.  Finally, we write $\triangle$ for symmetric difference, so that $\mathcal{A} \triangle \mathcal{B} = (\mathcal{A}^c \cap \mathcal{B}) \cup (\mathcal{A} \cap \mathcal{B}^c)$.  

We conclude this section with a preliminary study to demonstrate our new results for the $k$nn, SVM and LDA classifiers in the homogeneous noise case.

\begin{example}
\label{Ex:prelim}
\emph{In this motivating example, we demonstrate the surprising effects of imperfect labels on the performance of the $k$nn, SVM and LDA classifiers.  We generate $n$ independent training data pairs, where the prior probabilities of classes 0 and 1 are $9/10$ and $1/10$ respectively; class 0 and 1 observations have bivariate normal distributions with means $\mu_0 = (-1,0)^T$ and $\mu_1 = (1,0)^T$ respectively, and common identity covariance matrix.  We then introduce label noise in the training data set by flipping the true training data labels independently with probability $\rho =$ 0.3.  One example of a data set of size $n=1000$ from this model, both before and after label noise is added, is shown in Figure~\ref{fig:dataplot}.}

\begin{figure}[ht!]
	\centering
	\includegraphics[width=\textwidth]{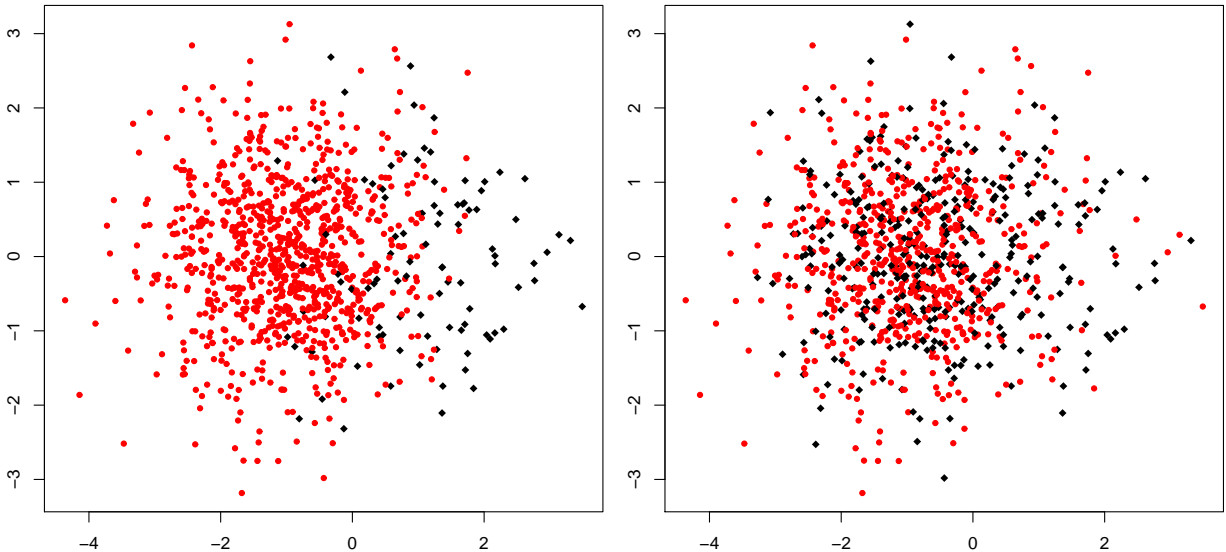} \caption{{\small One training dataset from the model in Example~\ref{Ex:prelim} for $n = 1000$, without label noise (left) and with label noise (right). We plot class~0 in red and class~1 in black.}}
	\label{fig:dataplot}
\end{figure}

\begin{figure}
	\centering
\includegraphics[width=0.6\textwidth]{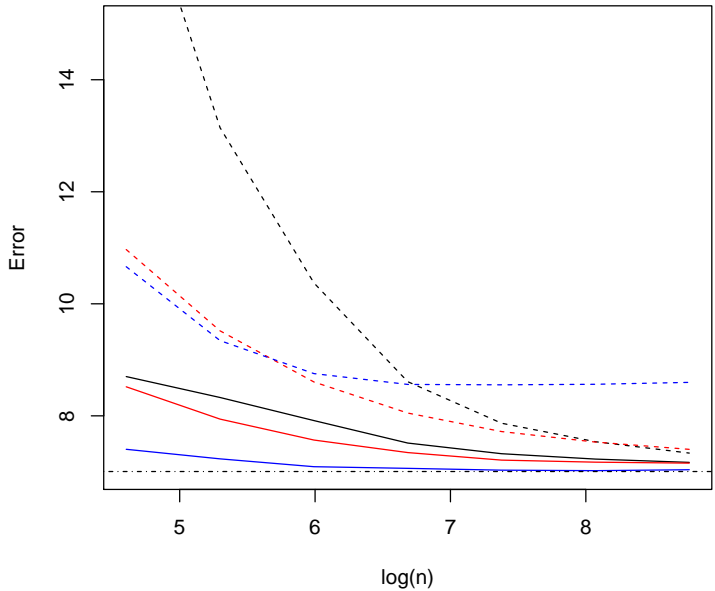}
\caption{{\small Risks ($\%$) of the $k$nn (black), SVM (red) and LDA (blue) classifiers trained using perfect (solid lines) and imperfect labels (dotted lines).  The dot-dashed line shows the Bayes risk, which is 7.0\%.}}
\label{fig:errorplot}
\end{figure}

\emph{In Figure~\ref{fig:errorplot},  we present the percentage error rates, both with and without label noise, of the $k$nn, SVM and LDA classifiers.  The error rates were estimated by the average over 1000 repetitions of the experiment of the percentage of misclassified observations on a test set, without label noise, of size 1000.   We set $k = k_{n} = \lfloor n^{2/3}/2 \rfloor$ for the $k$nn classifier, and set the tuning parameter $\lambda = 1$ for the SVM classifier; see~\eqref{eq:SVMK}.}

\emph{In this simple setting where the decision boundary of the Bayes classifier is a hyperplane, all three classifiers perform very well with perfectly labelled training data, especially LDA, whose derivation was motivated by Gaussian class-conditional distributions with common covariance matrix.  With mislabelled training data, the performance of all three classifiers is somewhat affected, but the $k$nn and SVM classifiers are relatively robust to the label noise, particularly for large $n$.  Indeed, we will show that these classifiers remain consistent in this setting.  The gap between the performance of the LDA classifier and that of the Bayes classifier, however, persists even for large $n$; this again is in line with our theory developed in Theorem~\ref{thm:LDAhomo}, where we derive the asymptotic risk of the LDA classifier trained with homogeneous label errors.  The limiting risk is given explicitly in terms of the noise rate $\rho$, the prior probabilities, and the Mahalanobis distance between the two class-conditional distributions.}
\end{example}

\section{Statistical setting}
\label{sec:setting} 
Let $\mathcal{X}$ be a measurable space.  In the basic binary classification problem, we observe independent and identically distributed training data pairs $(X_1, Y_1), \ldots, (X_n, Y_n)$ taking values in $\mathcal{X} \times \{0,1\}$ with joint distribution~$P$.  The task is to predict the class $Y$ of a new observation $X$, where $(X,Y) \sim P$ is independent of the training data. 

Define the \emph{prior probabilities} $\pi_1 = \mathbb{P}(Y = 1) = 1- \pi_0 \in (0,1)$ and \emph{class-conditional distributions} $X \mid \{Y=r\} \sim P_r$ for $r=0,1$.  The \emph{marginal feature distribution} of $X$ is denoted~$P_X$ and we define the \emph{regression function} $\eta(x) = \mathbb{P}\bigl( Y = 1 \mid X = x)$.   A \emph{classifier} $C$ is a measurable function from $\mathcal{X}$ to $\{0,1\}$, with the interpretation that a point $x \in \mathcal{X}$ is assigned to class~$C(x)$.   

The \emph{risk} of a classifier $C$ is $R(C) = \mathbb{P}\{C(X) \neq Y\}$; it is minimized by the \emph{Bayes classifier}
\[
C^{\mathrm{Bayes}}(x) = \left\{ \begin{array}{ll} 1& \text{\quad if } \eta(x) \geq 1/2
\\ 0 & \text{\quad otherwise.} \end{array} \right.
\]
However, since $\eta$ is typically unknown, in practice we construct a classifier $C_{n}$, say, that depends on the $n$ training data pairs.  We say $(C_n)$ is \emph{consistent} if $R(C_{n}) - R(C^{\mathrm{Bayes}}) \rightarrow 0$ as $n \rightarrow \infty$.  When we write $R(C_n)$ here, we implicitly assume that $C_{n}$ is a measurable function from $(\mathcal{X} \times \{0,1\})^n\times\mathcal{X}$ to $\{0,1\}$, and the probability is taken over the joint distribution of $(X_1,Y_1),\ldots,(X_n,Y_n), (X,Y)$.   It is convenient to set $\mathcal{S} = \{x \in \mathcal{X}:\eta(x) = 1/2\}$.

In this paper, we study settings where the true class labels $Y_1, \ldots, Y_n$ for the training data are not observed.  Instead we see $\tilde{Y}_1, \ldots, \tilde{Y}_n$, where the noisy label $\tilde{Y}_{i}$ still takes values in $\{0,1\}$, but may not be the same as $Y_i$.  The task, however, is still to predict the true class label $Y$ associated with the test point $X$.   We can therefore consider an augmented model where $(X, Y, \tilde{Y}), (X_1, Y_1, \tilde{Y}_1), \ldots, (X_n, Y_n, \tilde{Y}_n)$ are independent and identically distributed triples taking values in $\mathcal{X} \times \{0,1\} \times \{0,1\}$.  

At this point the dependence between $Y$ and $\tilde{Y}$ is left unrestricted, but we introduce the following notation:  define measurable functions $\rho_0, \rho_1 : \mathcal{X} \rightarrow [0,1]$ by $\rho_{r}(x) = \mathbb{P}(\tilde{Y} \neq Y \mid X = x, Y = r)$.  Thus, letting $Z \mid \{X = x, Y = r\} \sim \mathrm{Bin}(1, 1- \rho_r(x))$ for $r=0,1$, we can write $\tilde{Y} =  Z Y+ (1-Z)(1-Y)$. We refer to the case where $\rho_0(x) = \rho_1(x) = \rho$ for all $x \in \mathcal{X}$ as \emph{$\rho$-homogeneous noise}.  Further, let $\tilde{P}$ denote the joint distribution of $(X,\tilde{Y})$, and let $\tilde{\eta}(x) = \mathbb{P}(\tilde{Y} = 1 \mid X = x)$ denote the regression function for $\tilde{Y}$, so that
\begin{align}
\label{eq:tildeeta}
\tilde{\eta}(x) &= \eta(x) \mathbb{P}(\tilde{Y} = 1 \mid X = x, Y = 1) + \{1- \eta(x)\}\mathbb{P}(\tilde{Y} = 1 \mid X = x, Y = 0) \nonumber
\\& = \eta(x) \{1 - \rho_1(x)\} + \{1 - \eta(x)\}\rho_0(x).
\end{align}
We also define the \textit{corrupted Bayes classifier}
\[
\tilde{C}^{\mathrm{Bayes}}(x) = \left\{ \begin{array}{ll} 1& \text{\quad if } \tilde{\eta}(x) \geq 1/2
\\ 0 & \text{\quad otherwise,} \end{array} \right.
\]
which minimizes the \textit{corrupted risk} $\tilde{R}(C) = \mathbb{P}\{C(X) \neq \tilde{Y}\}$.

\section{Excess risk bounds for arbitrary classifiers}
\label{sec:finite}
A key property in this work will be that the Bayes classifier is preserved under label noise; more specifically, in Theorem~\ref{thm:hetnoise}(i) below, we will provide conditions under which  
\begin{equation}
\label{eq:symmetric}
P_{X}\bigl( \{x \in \mathcal{S}^c : \tilde{C}^{\mathrm{Bayes}}(x) \neq C^{\mathrm{Bayes}}(x)\} \bigr) = 0.
\end{equation}
In Theorem~\ref{thm:hetnoise}(ii), we go on to show that, under slightly stronger conditions on the label error probabilities and for an arbitrary classifier $C$, we can bound the excess risk $R(C)  - R(C^{\mathrm{Bayes}})$ of predicting the true label by a multiple of the excess risk of predicting a noisy label $\tilde{R}(C) - \tilde{R}(\tilde{C}^{\mathrm{Bayes}})$, where this multiple does not depend on the classifier $C$.   This latter result is particularly useful when the classifier $C$ is trained using the imperfect  labels, that is with the training data $(X_{1}, \tilde{Y}_{1}), \ldots, (X_{n}, \tilde{Y}_{n})$, because, as will be shown in the next section, we are able to provide further control of $\tilde{R}(C) - \tilde{R}(\tilde{C}^{\mathrm{Bayes}})$ for specific choices of $C$.   

It is convenient to let $\mathcal{B} = \{x \in \mathcal{S}^c : \rho_0 (x) + \rho_1(x) < 1 \}$, and let
\[
\mathcal{A} =\biggl\{x \in \mathcal{B}:\frac{\rho_1(x) - \rho_0(x)}{\{2\eta(x)-1\}\{1 - \rho_0(x) - \rho_1(x)\}} < 1\biggr\}.
\]
\begin{theorem}
	\label{thm:hetnoise}
	(i) We have 
	\begin{equation}
	\label{eq:noisecond}
	P_X\bigl(\mathcal{A} \, \triangle \, \{x \in \mathcal{B}: \tilde{C}^{\mathrm{Bayes}}(x) = C^{\mathrm{Bayes}}(x) \} \bigr) = 0.
	\end{equation}
In particular, if $P_{X}(\mathcal{A}^c \cap \mathcal{S}^c) = 0$, then~\eqref{eq:symmetric} holds.
	
	(ii) Now suppose, in fact, that there exist $\rho^* < 1/2$ and $a^* < 1$ such that 
	$P_{X}(\{x \in \mathcal{S}^c : \rho_0 (x) + \rho_1(x) > 2\rho^* \}) = 0$, and 
	\[
	P_X \biggl(\biggl\{ x\in \mathcal{B}: \frac{\rho_1(x) - \rho_0(x)}{\{2\eta(x)-1\}\{1 - \rho_0(x) - \rho_1(x)\}} > a^* \biggr\}\biggr) = 0.
	\]
	Then, for any classifier $C$,
	\[
	R(C)  - R(C^{\mathrm{Bayes}})  \leq  \frac{\tilde{R}(C) - \tilde{R}(\tilde{C}^{\mathrm{Bayes}})}{(1- 2\rho^*)(1-a^*) }.
	\]
\end{theorem}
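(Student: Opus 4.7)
The strategy is to reduce both parts to a single pointwise algebraic identity for $2\tilde\eta(x) - 1$, then to apply the standard pointwise representation of excess risk. Expanding~\eqref{eq:tildeeta} gives
$$2\tilde\eta(x) - 1 = \{2\eta(x) - 1\}\{1 - \rho_0(x) - \rho_1(x)\} - \{\rho_1(x) - \rho_0(x)\},$$
and, on $\mathcal{B}\cap\mathcal{S}^c$, where $1-\rho_0-\rho_1>0$ and $2\eta-1\neq 0$, I would factor this as
$$2\tilde\eta(x) - 1 = \{2\eta(x) - 1\}\{1 - \rho_0(x) - \rho_1(x)\}\{1 - r(x)\},$$
where $r(x)$ denotes the ratio appearing in the definition of $\mathcal{A}$.

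To prove (i), since $1-\rho_0-\rho_1>0$ on $\mathcal{B}$, the sign of $2\tilde\eta(x)-1$ agrees with that of $2\eta(x)-1$ precisely when $r(x)<1$. This yields $\mathcal{A}\subseteq\{x\in\mathcal{B}:\tilde C^{\mathrm{Bayes}}(x)=C^{\mathrm{Bayes}}(x)\}$ and $\{x\in\mathcal{B}:r(x)>1\}\subseteq\{x\in\mathcal{B}:\tilde C^{\mathrm{Bayes}}(x)\neq C^{\mathrm{Bayes}}(x)\}$. Any contribution to the symmetric difference in~\eqref{eq:noisecond} must therefore lie in the boundary tie set $\{r=1\}=\{\tilde\eta=1/2\}$, which is $P_X$-null in the regime of interest (and certainly under the strict-inequality hypotheses of part (ii)). The ``in particular'' clause then follows: $P_X(\mathcal{A}^c\cap\mathcal{S}^c)=0$ forces Bayes agreement $P_X$-a.e.\ on $\mathcal{S}^c$, which is~\eqref{eq:symmetric}.

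For (ii), the hypotheses give $\rho_0+\rho_1\leq 2\rho^*<1$ and $r(x)\leq a^*<1$ $P_X$-a.e.\ on $\mathcal{S}^c$, so $P_X(\mathcal{A}^c\cap\mathcal{S}^c)=0$ and, by part (i), $\tilde C^{\mathrm{Bayes}}=C^{\mathrm{Bayes}}$ $P_X$-a.e.\ on $\mathcal{S}^c$. The factorisation then delivers the pointwise lower bound
$$|2\tilde\eta(x)-1| \geq (1-2\rho^*)(1-a^*)\,|2\eta(x)-1|$$
$P_X$-a.e.\ on $\mathcal{S}^c$ (using $1-r(x)\geq 1-a^*$, valid whether $r(x)$ is close to $a^*$ or very negative). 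I would then combine this with the standard identity $R(C)-R(C^{\mathrm{Bayes}})=\int \mathbf{1}\{C\neq C^{\mathrm{Bayes}}\}|2\eta-1|\,dP_X$ (whose integrand vanishes on $\mathcal{S}$), swapping $C^{\mathrm{Bayes}}$ for $\tilde C^{\mathrm{Bayes}}$ in the indicator (valid a.e.\ on $\mathcal{S}^c$ by part (i)), to obtain
$$R(C)-R(C^{\mathrm{Bayes}}) \leq \frac{1}{(1-2\rho^*)(1-a^*)}\int \mathbf{1}\{C\neq\tilde C^{\mathrm{Bayes}}\}|2\tilde\eta-1|\,dP_X = \frac{\tilde R(C)-\tilde R(\tilde C^{\mathrm{Bayes}})}{(1-2\rho^*)(1-a^*)}.$$

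The main subtlety is bookkeeping at the boundary tie set in part (i); once that is handled, the rest is routine algebra and a standard excess-risk identity.
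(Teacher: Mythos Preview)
Your proposal is correct and follows essentially the same route as the paper: the identical factorisation of $2\tilde\eta(x)-1$ on $\mathcal{B}$ drives part~(i), and the same pointwise lower bound $|2\tilde\eta-1|\geq(1-2\rho^*)(1-a^*)|2\eta-1|$ drives part~(ii). The only cosmetic difference is that for (ii) the paper packages the final step through its Proposition~\ref{prop:prelim} (choosing $\kappa=\{(1-2\rho^*)(1-a^*)\}^{-1}$ so that $P_X(A_\kappa^c)=0$), whereas you apply the excess-risk identity directly; these unwind to the same computation, though the proposition route has the side benefit of also producing the alternative bound $\mathbb{P}\{C(X)\neq\tilde C^{\mathrm{Bayes}}(X)\}$ that the paper uses elsewhere.
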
 
In Theorem~\ref{thm:hetnoise}(i), the condition $P_X(\mathcal{A}^c \cap \mathcal{S}^c) = 0$ restricts the difference between the two mislabelling probabilities at $P_X$-almost all $x \in \mathcal{S}^c$, with stronger restrictions where $\eta(x)$ is close to $1/2$ and where $\rho_{0}(x) + \rho_1(x)$ is close to 1.  Moreover, since $\mathcal{A}\subseteq\mathcal{B}$, we also have $P_X(\mathcal{B}^c \cap \mathcal{S}^c) = 0$, which limits the total amount of label noise at each point; cf.~\citet[Assumption~1]{Menon:2016}.  In particular, it ensures that
\[
\mathbb{P}(\tilde{Y} \neq Y \mid X = x) = \eta(x) \rho_{1}(x) + \{1 - \eta(x)\} \rho_{0}(x) < 1, 
\] 
for $P_{X}$-almost all $x \in \mathcal{S}^c$.  In part~(ii), the requirement on $a^*$ imposes a slightly stronger restriction on the same weighted difference between the two mislabelling probabilities compared with part~(i).

The conditions in Theorem~\ref{thm:hetnoise} generalize those given in the existing literature by allowing a wider class of noise mechanisms.  For instance, in the case of $\rho$-homogeneous noise, we have $P_X(\mathcal{A}^c \cap \mathcal{S}^c) = 0$ provided only that $\rho < 1/2$.  In fact, in this setting, we may take $a^* = 0$ \citep[Theorem~1]{Ghosh:2015}.  More generally, we may also take $a^{*} = 0$ if the noise depends only on the feature vector and not the true class label, i.e.~$\rho_{0}(x) = \rho_{1}(x)$ for all $x$ \citep[Proposition~4]{Menon:2016}. 

The proof of Theorem~\ref{thm:hetnoise}(ii) relies on the following proposition, which provides a bound on the excess risk for predicting a true label, assuming only that~\eqref{eq:symmetric} holds.  
\begin{proposition}
\label{prop:prelim}
Assume that \eqref{eq:symmetric} holds. Further, for $\kappa > 0$, let 
\[
A_{\kappa} = \Bigl\{x \in \mathcal{X} : |2\eta(x) - 1| \leq  \kappa |2\tilde{\eta}(x) - 1|\Bigr\}.
\]
 Then, for any classifier $C$, 
\begin{equation}
\label{eq:minbound}
R(C)\! - \!R(C^{\mathrm{Bayes}}) \leq \min\Bigl[\mathbb{P}\{C(X)\! \neq\! \tilde{C}^{\mathrm{Bayes}}(X)\}, \inf_{\kappa > 0} \Bigl\{\kappa \{\tilde{R}(C) \!-\! \tilde{R}(\tilde{C}^{\mathrm{Bayes}})\} \!+\! P_X(A_{\kappa}^c) \Bigr\} \Bigr].
\end{equation} 
\end{proposition}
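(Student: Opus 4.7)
The plan is to exploit the classical excess risk identity for binary classification, which states that for any classifier $C$,
\[
R(C) - R(C^{\mathrm{Bayes}}) = \mathbb{E}\bigl[|2\eta(X) - 1| \, \mathbbm{1}_{\{C(X) \neq C^{\mathrm{Bayes}}(X)\}}\bigr],
\]
together with the analogous identity for $\tilde{R}(C) - \tilde{R}(\tilde{C}^{\mathrm{Bayes}})$ with $\tilde{\eta}$ in place of $\eta$. A crucial initial observation is that $|2\eta(x) - 1|$ vanishes on $\mathcal{S}$, so the expectation above may be restricted to $x \in \mathcal{S}^c$ at no cost. Combined with the hypothesis~\eqref{eq:symmetric}, which lets us replace the event $\{C(X) \neq C^{\mathrm{Bayes}}(X), X \in \mathcal{S}^c\}$ by $\{C(X) \neq \tilde{C}^{\mathrm{Bayes}}(X), X \in \mathcal{S}^c\}$ up to a $P_X$-null set, this reduces everything to manipulations of the latter event.

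For the first term in the minimum, I would simply bound $|2\eta(X) - 1| \leq 1$ inside the identity and discard the restriction to $\mathcal{S}^c$ after the substitution above, obtaining
\[
R(C) - R(C^{\mathrm{Bayes}}) \leq \mathbb{P}\bigl(C(X) \neq \tilde{C}^{\mathrm{Bayes}}(X), X \in \mathcal{S}^c\bigr) \leq \mathbb{P}\bigl\{C(X) \neq \tilde{C}^{\mathrm{Bayes}}(X)\bigr\}.
\]

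For the second term, fix $\kappa > 0$ and split the expectation in the excess risk identity according to whether $X \in A_\kappa$ or $X \in A_\kappa^c$. On $A_\kappa$, by definition $|2\eta(X) - 1| \leq \kappa |2\tilde{\eta}(X) - 1|$, so after using~\eqref{eq:symmetric} to swap Bayes classifiers and dropping the indicators $\mathbbm{1}_{A_\kappa}$ and $\mathbbm{1}_{\mathcal{S}^c}$, this piece is at most $\kappa \{\tilde{R}(C) - \tilde{R}(\tilde{C}^{\mathrm{Bayes}})\}$ via the $\tilde{\eta}$-version of the excess risk identity. On $A_\kappa^c$, bound $|2\eta(X) - 1| \leq 1$ and the other indicator by $\mathbbm{1}_{A_\kappa^c}$ to get the contribution $P_X(A_\kappa^c)$. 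Summing the two pieces yields
\[
R(C) - R(C^{\mathrm{Bayes}}) \leq \kappa\{\tilde{R}(C) - \tilde{R}(\tilde{C}^{\mathrm{Bayes}})\} + P_X(A_\kappa^c),
\]
and taking the infimum over $\kappa > 0$ and then the minimum with the first bound completes the proof.

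There is no real obstacle here: the only subtlety is remembering to use the vanishing of $|2\eta - 1|$ on $\mathcal{S}$ so that hypothesis~\eqref{eq:symmetric} (which only controls $\tilde{C}^{\mathrm{Bayes}} = C^{\mathrm{Bayes}}$ on $\mathcal{S}^c$) can be invoked cleanly to transfer between excess risks. All other steps are elementary measure-theoretic manipulations.
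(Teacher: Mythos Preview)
Your proposal is correct and follows essentially the same approach as the paper: both arguments rest on the excess risk identity $R(C)-R(C^{\mathrm{Bayes}})=\mathbb{E}\bigl[|2\eta(X)-1|\mathbbm{1}_{\{C(X)\neq C^{\mathrm{Bayes}}(X)\}}\bigr]$ (and its $\tilde\eta$ analogue), use the vanishing of $|2\eta-1|$ on $\mathcal{S}$ together with~\eqref{eq:symmetric} to swap $C^{\mathrm{Bayes}}$ for $\tilde C^{\mathrm{Bayes}}$, and then split over $A_\kappa$ and $A_\kappa^c$. The only cosmetic difference is that the paper first proves a more general bound (Proposition~\ref{prop:prelim2}) without assuming~\eqref{eq:symmetric}, obtaining the slightly sharper remainder $\mathbb{E}\bigl(|2\eta(X)-1|\mathbbm{1}_{\{X\in\mathcal{D}\setminus A_\kappa\}}\bigr)$ in place of $P_X(A_\kappa^c)$, and then specializes; your direct route to Proposition~\ref{prop:prelim} is equally valid.
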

Our main focus in this work is on settings where $\tilde{C}^{\mathrm{Bayes}}$ and $C^{\mathrm{Bayes}}$ agree, i.e.~\eqref{eq:symmetric} holds, because this is where we can hope for classifiers to be robust to label noise.  However, in this instance, we present a more general version of Proposition~\ref{prop:prelim} as Proposition~\ref{prop:prelim2} in the appendix; this bounds the excess risk of an arbitrary classifier without the assumption that~\eqref{eq:symmetric} holds.  We see in that result, there is an additional contribution to the risk bound of $R(\tilde{C}^{\mathrm{Bayes}}) - R(C^{\mathrm{Bayes}}) \geq 0$.  See also, for instance, \citet{Natarajan:2013}, who study asymmetric homogeneous noise, where $\rho_0(x) = \rho_0 \neq \rho_1 = \rho_1(x)$, with $\rho_0$ and $\rho_1$ known.

We can regard $|2\eta(x) - 1|$ as a measure of the ease of classifying $x$.  Hence, in Proposition~\ref{prop:prelim}, we can interpret $A_{\kappa}$ as the set of points $x$ where the relative difficulty of classifying $x$ in the corrupted problem compared with its uncorrupted version is controlled.  The level of this control can then be traded off against the measure of the exceptional set $A_\kappa^c$.  

To provide further understanding of Proposition~\ref{prop:prelim}, observe that in general, we have
\begin{align*}
\tilde{R}(C) - \tilde{R}(\tilde{C}^{\mathrm{Bayes}}) & = \int_{\mathcal{X}} \bigl[\mathbb{P}\{C(x) = 0\} - \mathbbm{1}_{\{\tilde{\eta}(x) < 1/2\}}\bigr] \{2\tilde{\eta}(x) - 1\} \, dP_{X}(x)
\\ & \leq \mathbb{P}\{C(X) \neq \tilde{C}^{\mathrm{Bayes}}(X)\}. 
\end{align*}
Thus, if $P_X(A_{1}^c) = 0$, then the second term in the minimum in~\eqref{eq:minbound} gives a better bound than the first.  However, typically in practice, we would have that $P_X(A_{1}^c) \neq 0$, and indeed, in Example~\ref{ex:1nn} in the appendix, we show that for the 1-nearest neighbour classifier with homogeneous noise, either of the two terms in the minimum in \eqref{eq:minbound} can be smaller, depending on the noise level.  As a consequence of Proposition~\ref{prop:prelim}, we have the following corollary.
\begin{corollary}
\label{cor:consistent}
 Suppose that $(\tilde{C}_n)$ is a sequence of classifiers satisfying $\tilde{R}(\tilde{C}_n) \rightarrow \tilde{R}(\tilde{C}^{\mathrm{Bayes}})$ and assume that \eqref{eq:symmetric} holds.  Further, let $\tilde{\mathcal{S}} = \{x \in \mathcal{X}:\tilde{\eta}(x) = 1/2\}$.  Then
  \[
    \limsup_{n \rightarrow \infty} R(\tilde{C}_n) - R(C^{\mathrm{Bayes}}) \leq P_X(\tilde{\mathcal{S}} \setminus \mathcal{S}).
  \]
In particular, if $P_X(\tilde{\mathcal{S}} \setminus \mathcal{S}) = 0$, then $R(\tilde{C}_n) \rightarrow R(C^{\mathrm{Bayes}})$ as $n \rightarrow \infty$.  
\end{corollary}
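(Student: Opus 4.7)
The plan is to apply Proposition~\ref{prop:prelim} directly, with the $\kappa$-indexed bound, and then let $\kappa \to \infty$ to squeeze out the exceptional set. Since~\eqref{eq:symmetric} is assumed, Proposition~\ref{prop:prelim} gives, for every $\kappa > 0$ and every $n$,
\[
R(\tilde{C}_n) - R(C^{\mathrm{Bayes}}) \leq \kappa \bigl\{\tilde{R}(\tilde{C}_n) - \tilde{R}(\tilde{C}^{\mathrm{Bayes}})\bigr\} + P_X(A_\kappa^c).
\]
Fixing $\kappa$ and using the hypothesis $\tilde{R}(\tilde{C}_n) \to \tilde{R}(\tilde{C}^{\mathrm{Bayes}})$, the first term vanishes as $n \to \infty$, so
\[
\limsup_{n \to \infty} \bigl\{R(\tilde{C}_n) - R(C^{\mathrm{Bayes}})\bigr\} \leq P_X(A_\kappa^c).
\]

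Next I would identify the limit of $P_X(A_\kappa^c)$ as $\kappa \to \infty$. The sets $A_\kappa = \{x : |2\eta(x) - 1| \leq \kappa |2\tilde{\eta}(x) - 1|\}$ are nondecreasing in $\kappa$, so $(A_\kappa^c)_{\kappa > 0}$ is nonincreasing. A point $x$ belongs to $\bigcap_{\kappa > 0} A_\kappa^c$ if and only if $|2\eta(x) - 1| > \kappa |2\tilde{\eta}(x) - 1|$ for every $\kappa > 0$, which forces $|2\tilde{\eta}(x) - 1| = 0$ and $|2\eta(x) - 1| > 0$; equivalently, $x \in \tilde{\mathcal{S}} \setminus \mathcal{S}$. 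By continuity of $P_X$ from above,
\[
\lim_{\kappa \to \infty} P_X(A_\kappa^c) = P_X\bigl(\tilde{\mathcal{S}} \setminus \mathcal{S}\bigr).
\]
Given any $\epsilon > 0$, choose $\kappa$ large enough that $P_X(A_\kappa^c) \leq P_X(\tilde{\mathcal{S}} \setminus \mathcal{S}) + \epsilon$; combining with the display above and letting $\epsilon \downarrow 0$ yields the main claim.

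For the final sentence of the corollary, I would simply note that $R(C) \geq R(C^{\mathrm{Bayes}})$ for any classifier $C$, so $\liminf_n \{R(\tilde{C}_n) - R(C^{\mathrm{Bayes}})\} \geq 0$; if $P_X(\tilde{\mathcal{S}} \setminus \mathcal{S}) = 0$ then the $\limsup$ bound gives $\lim_n R(\tilde{C}_n) = R(C^{\mathrm{Bayes}})$. There is no real obstacle here: the only subtlety is the continuity-from-above argument identifying $\bigcap_\kappa A_\kappa^c$ with $\tilde{\mathcal{S}} \setminus \mathcal{S}$, which requires the small case analysis on whether $|2\tilde{\eta}(x) - 1|$ vanishes; everything else is a one-line consequence of Proposition~\ref{prop:prelim} and the convergence hypothesis.
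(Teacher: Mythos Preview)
Your proof is correct and follows essentially the same route as the paper's: apply the $\kappa$-bound from Proposition~\ref{prop:prelim} and let $\kappa\to\infty$, identifying $\bigcap_{\kappa>0} A_\kappa^c$ with $\tilde{\mathcal{S}}\setminus\mathcal{S}$. The only cosmetic difference is that the paper diagonalises by choosing $\kappa_n = 1/\epsilon_n \to \infty$ along the sequence, whereas you take the iterated limit (first $n\to\infty$ with $\kappa$ fixed, then $\kappa\to\infty$); both arrive at the same conclusion.
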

The condition $\tilde{R}(\tilde{C}_n) \rightarrow \tilde{R}(\tilde{C}^{\mathrm{Bayes}})$ asks that the classifier is consistent for predicting a corrupted test label.  In Section~\ref{sec:asymptotic} we will see that appropriate versions of the corrupted $k$nn and SVM classifiers satisfy this condition, provided, in the latter case, that the feature vectors have compact support.  To understand the strength of Corollary~\ref{cor:consistent}, consider the special case of $\rho$-homogeneous noise, and a classifier $\tilde{C}_n$ that is consistent for predicting a noisy label when trained with corrupted data.  Then $\tilde{\mathcal{S}} = \mathcal{S}$ by~\eqref{eq:tildeeta}, so provided only that $\rho < 1/2$, Corollary~\ref{cor:consistent} ensures that $\tilde{C}_n$ remains consistent for predicting a true label when trained using the corrupted data.   

\section{Asymptotic properties} 
\label{sec:asymptotic}
\subsection{The $k$-nearest neighbour classifier}
\label{sec:knn}
We now specialize to the case $\mathcal{X} = \mathbb{R}^d$.  The $k$nn classifier assigns the test point $X$ to a class based on a majority vote over the class labels of the $k$ nearest points among the training data.  More precisely, given $x \in \mathbb{R}^d$, let $(X_{(1)}, Y_{(1)}), \ldots, (X_{(n)}, Y_{(n)})$ be the reordering of the training data pairs such that
\[
\|X_{(1)} - x \| \leq \ldots \leq \|X_{(n)} - x \|,
\]
where ties are broken by preserving the original ordering of the indices.  For $k \in \{1, \ldots, n\}$, the \emph{$k$-nearest neighbour classifier} is
\[
C^{k\mathrm{nn}}(x) = C_n^{k\mathrm{nn}}(x) = \left\{ \begin{array}{ll} 1& \text{\quad if } \frac{1}{k} \sum_{i = 1} ^{k} \mathbbm{1}_{\{Y_{(i)} =1 \} } \geq 1/2
\\ 0 & \text{\quad otherwise.} \end{array} \right.
\]

This simple and intuitive method has received considerable attention since it was introduced by \citet{Fix:1951,Fix:1989}.  \citet{Stone:1977} showed that the $k$nn classifier is universally consistent, i.e., $R(C^{k\mathrm{nn}}) \rightarrow R(C^{\mathrm{Bayes}})$ for any distribution $P$, as long as $k = k_n \rightarrow \infty$ and $k/n \rightarrow 0$ as $n \rightarrow \infty$.    For a substantial overview of the early work on the theoretical properties of the $k$nn classifier, see \citet{PTPR:1996}.  Further recent studies include \citet{Kulkarni:1995}, \citet{Audibert:2007}, \citet{Hall:2008}, \citet{Biau:2010}, \citet{Samworth:2012}, \citet{Chaudhuri:2014}, \citet{Gadat:16}, \citet{Celisse:2018} and \citet{CBS:2017}.  

Here we study the properties of the \emph{corrupted $k$-nearest neighbour classifier}
 \[
\tilde{C}^{k\mathrm{nn}}(x) = \tilde{C}_n^{k\mathrm{nn}}(x)  = \left\{ \begin{array}{ll} 1& \text{\quad if } \frac{1}{k}  \sum_{i = 1} ^{k} \mathbbm{1}_{\{\tilde{Y}_{(i)} =1 \} } \geq 1/2
\\ 0 & \text{\quad otherwise,} \end{array} \right.
\]
where $\tilde{Y}_{(i)}$ denotes the corrupted label of $(X_{(i)},Y_{(i)})$.  Since the $k$nn classifier is universally consistent, we have $\tilde{R}(\tilde{C}^{k\mathrm{nn}}) \rightarrow \tilde{R}(\tilde{C}^{\mathrm{Bayes}})$ for any choice of $k$ satisfying Stone's conditions.  Thus, by Corollary~\ref{cor:consistent}, if~\eqref{eq:symmetric} holds and $P_X(\tilde{\mathcal{S}} \setminus \mathcal{S}) = 0$, then the corrupted $k$nn classifier remains universally consistent.  In particular, in the special case of $\rho$-homogeneous noise, provided only that $\rho < 1/2$, this result tells us that the corrupted $k$nn classifier remains universally consistent. 

We now show that, under further regularity conditions on the data distribution $P$ and the noise mechanism, it is possible to give a more precise description of the asymptotic error properties of the corrupted $k$nn classifier.  Since our conditions on $P$, which are slight simplifications of those used in \citet{CBS:2017} to analyse the uncorrupted $k$nn classifier, are a little technical, we give an informal summary of them here, deferring formal statements of our assumptions A1--A4 to just before the proof of Theorem~\ref{thm:knnhet} in Section~\ref{sec:knnproofs}.  First, we assume that each of the class-conditional distributions has a density with respect to Lebesgue measure such that the marginal feature density $\bar{f}$ is continuous and positive.  It turns out that the dominant terms in the asymptotic expansion of the excess risk of $k$nn classifiers are driven by the behaviour of $P$ in a neighbourhood $\mathcal{S}^\epsilon$ of the set $\mathcal{S}$, which consists of points that are difficult to classify correctly, so we ask for further regularity conditions on the restriction of $P$ to $\mathcal{S}^\epsilon$.  In particular, we ask for both $\bar{f}$ and $\eta$ to have two well-behaved derivatives in $\mathcal{S}^\epsilon$, and for $\dot{\eta}$ to be bounded away from 0 on~$\mathcal{S}$.  This amounts to asking that the class-conditional densities, when weighted by the prior probabilities of each class, cut at an angle, and ensures that the set $\mathcal{S}$ is a $(d-1)$-dimensional orientable manifold.  Away from the set $\mathcal{S}^\epsilon$, we only require weaker conditions on $P_{X}$, and for $\eta$ to be bounded away from $1/2$.  Finally, we ask for two $\alpha$th moment conditions to hold, namely that $\int_{\mathbb{R}^d}  \|x\|^{\alpha} \, dP_{X}(x) < \infty$ and $\int_{\mathcal{S}}  \bar{f}(x_0)^{d/(\alpha+d)} \, d\mathrm{Vol}^{d-1}(x_0) < \infty$, where $d\mathrm{Vol}^{d-1}$ denotes the $(d-1)$-dimensional volume form on $\mathcal{S}$. 

For $\beta \in (0,1/2)$, let  $K_{\beta} = \{\lceil (n-1)^\beta \rceil, \ldots, \lfloor (n-1)^{1-\beta} \rfloor \}$ denote the set of values of $k$ to be considered for the $k$nn classifier.   Define
\[
B_{1} = \int_{\mathcal{S}} \frac{\bar{f}(x_0)}{4\|\dot\eta(x_0)\|} \, d\mathrm{Vol}^{d-1}(x_0),\quad B_{2} = \int_{\mathcal{S}} \frac{\bar{f}(x_0)^{1-4/d}}{\|\dot\eta(x_0)\|}a(x_0)^2 \, d\mathrm{Vol}^{d-1}(x_0),
\]
where
\[
a(x) = \frac{\sum_{j=1}^d \bigl\{\eta_j(x)\bar{f}_j(x) + \frac{1}{2}\eta_{jj}(x)\bar{f}(x)\bigr\}}{(d+2)a_d^{2/d}\bar{f}(x)}.
\]
We will also make use of a condition on the noise rates near the Bayes decision boundary:
	\begin{description}
\item[\normalfont{\emph{Assumption} B1.}] There exist $\delta > 0$ and a function $g:(1/2-\delta, 1/2 + \delta) \rightarrow [0,1)$ that is differentiable at $1/2$, with the property that for $x$ such that $\eta(x) \in (1/2-\delta, 1/2 + \delta)$, we have  $\rho_0(x) = g(\eta(x))$ and $\rho_1(x) = g(1 - \eta(x))$. 
	\end{description}
This assumption asks that, when $\eta(x)$ is close to $1/2$, the probability of label noise depends only on $x$ through $\eta(x)$, and moreover, this probability varies smoothly with $\eta(x)$.  In other words, Assumption~B1 says that the probability of mislabelling an observation with true class label 0 depends only on the extent to which it appeared to be from class 1; conversely, the probability of mislabelling an observation with true label 1 depends only, and in a symmetric way, on the extent to which it appeared to be from class 0.  To give just one of many possible examples, one could imagine that the probability that a doctor misdiagnoses a malignant tumour as benign depends on the extent to which it appears to be malignant, and vice versa.   We remark that \citet[Definition~11]{Menon:2016} introduce a related probabilistically transformed noise model, where $\rho_0 = g_0 \circ \eta$ and $\rho_1 = g_1 \circ \eta$, but they also require that $g_0$ and $g_1$ are increasing on $[0,1/2]$ and decreasing on $[1/2,1]$; see also \citet{Bylander:1997}.

\begin{theorem}
\label{thm:knnhet}
Assume \emph{A1}, \emph{A2}, \emph{A3} and \emph{A4}($\alpha$).   Suppose that $\rho_0$, $\rho_1$ are continuous, and that both
\[
\rho^* = \frac{1}{2} \sup_{x \in \mathbb{R}^d} \{\rho_0(x) + \rho_1(x)\} < \frac{1}{2}
\]
and 
\[
a^* = \sup_{x \in \mathcal{B}} \frac{\rho_1(x) - \rho_0(x)}{\{2\eta(x)-1\}\{1 - \rho_0(x) - \rho_1(x)\}} < 1.
\]
Moreover, assume \emph{B1} holds with the additional requirement that $g$ is twice continuously differentiable, $\dot{g}(1/2) > 2g(1/2) -1$ and that $\ddot{g}$ is uniformly continuous. Then we have two cases:

(i) Suppose that $d \geq 5$ and $\alpha > 4d/(d-4)$.  Then for each $\beta \in (0,1/2)$,
\[
R(\tilde{C}^{k\mathrm{nn}}) - R(C^{\mathrm{Bayes}}) = \frac{B_1}{k\{1- 2g(1/2) + \dot{g}(1/2)\}^{2}}  + B_{2} \Bigl(\frac{k}{n}\Bigr)^{4/d} + o\biggl(\frac{1}{k} + \Bigl(\frac{k}{n}\Bigr)^{4/d}\biggr)
\]
as $n \to \infty$, uniformly for $k \in K_{\beta}$.

 (ii) Suppose that either $d \leq 4$, or, $d \geq 5$ and $\alpha \leq 4d/(d-4)$.  Then for each $\beta \in (0,1/2)$ and each $\epsilon> 0$ we have
\[
R(\tilde{C}^{k\mathrm{nn}}) - R(C^{\mathrm{Bayes}}) =   \frac{B_1}{k\{1- 2g(1/2) + \dot{g}(1/2)\}^{2}}  + o\Bigl(\frac{1}{k} +\Bigl(\frac{k}{n}\Bigr)^{\frac{\alpha}{\alpha+d} - \epsilon}\Bigr)
\]
as $n \to \infty$, uniformly for $k \in K_{\beta}$.  
\end{theorem}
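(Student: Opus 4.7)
The plan is to adapt the asymptotic analysis of Cannings, Berrett and Samworth (2017) for the uncorrupted $k$nn classifier to the corrupted setting. Since $\tilde Y_i \mid X_i \sim \mathrm{Bern}(\tilde\eta(X_i))$ independently, the corrupted $k$nn statistic $\hat S_k(x) := k^{-1}\sum_{i=1}^k \mathbbm{1}_{\{\tilde Y_{(i)} = 1\}}$ estimates $\tilde\eta(x)$, not $\eta(x)$; yet we want the risk with respect to the true label $Y$. The first task is algebraic: under Assumption~\emph{B1}, write $\tilde\eta = h \circ \eta$ near $\mathcal{S}$, where $h(p) := p\{1 - g(1-p)\} + (1-p)g(p)$, and compute $h(1/2) = 1/2$, $\dot h(1/2) = 1 - 2g(1/2) + \dot g(1/2) > 0$, and crucially $\ddot h(1/2) = 0$. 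Consequently, on $\mathcal{S}$ we have $\|\dot{\tilde\eta}\| = \dot h(1/2)\|\dot\eta\|$ and $\tilde\eta_{jk} = \dot h(1/2)\eta_{jk}$, so the bias coefficient in $\tilde\eta$-space satisfies $\tilde a(x_0) = \dot h(1/2) a(x_0)$. Theorem~\ref{thm:hetnoise}(i) confirms $\tilde C^{\mathrm{Bayes}} = C^{\mathrm{Bayes}}$ holds $P_X$-a.e.\ on $\mathcal{S}^c$ under our hypotheses on $\rho^*$ and $a^*$.

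Starting from $R(\tilde C^{k\mathrm{nn}}) - R(C^{\mathrm{Bayes}}) = \int |2\eta(x) - 1|\,\mathbb{P}\{\tilde C^{k\mathrm{nn}}(x) \neq C^{\mathrm{Bayes}}(x)\}\,dP_X(x)$, I would split the integral over a tubular neighbourhood $\mathcal{S}^\epsilon$ of $\mathcal{S}$ and its complement. On $\mathcal{S}^\epsilon$, apply a Gaussian approximation to $\hat S_k(x)$: the conditional mean given the neighbour locations is $\tilde\eta(x) + \tilde a(x)(k/n)^{2/d}/\bar f(x)^{2/d}$ up to lower-order terms, with conditional variance $\tilde\eta(x)\{1-\tilde\eta(x)\}/k \approx 1/(4k)$ near $\mathcal{S}$. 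Substituting $\tilde\eta(x) - 1/2 \approx \dot h(1/2)(\eta(x) - 1/2)$ and $\tilde a \approx \dot h(1/2) a$, one obtains
\[
\mathbb{P}\{\tilde C^{k\mathrm{nn}}(x) \neq C^{\mathrm{Bayes}}(x)\} \approx \Phi\Bigl(-2\dot h(1/2)\sqrt{k}\bigl\{|\eta(x) - 1/2| \pm a(x_0)(k/n)^{2/d}/\bar f(x_0)^{2/d}\bigr\}\Bigr),
\]
where $x_0$ is the projection of $x$ onto $\mathcal{S}$ and the sign depends on which side of $\mathcal{S}$ the point lies.

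I then invoke the coarea formula, decomposing $\mathcal{S}^\epsilon$ via the level sets of $\eta$ parametrised by $t = \eta(x) - 1/2$, followed by the substitution $u = 2\dot h(1/2)\sqrt{k}|t|$. The neighbourhood contribution becomes
\[
\frac{1}{2k\dot h(1/2)^2}\int_{\mathcal{S}} \frac{\bar f(x_0)}{\|\dot\eta(x_0)\|}\bigl\{F(\tau) + F(-\tau)\bigr\}\,d\mathrm{Vol}^{d-1}(x_0) + \text{lower order},
\]
with $F(c) := \int_0^\infty u\Phi(-u-c)\,du$ and $\tau(x_0) := 2\dot h(1/2)\sqrt{k}\,a(x_0)(k/n)^{2/d}/\bar f(x_0)^{2/d}$. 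A Taylor expansion yields $F(c) + F(-c) = \tfrac{1}{2} + \tfrac{1}{2}c^2 + O(c^4)$, using $F(0) = 1/4$ and $F''(0) = \bar\Phi(0) = 1/2$. The constant $\tfrac{1}{2}$ produces the variance term $B_1/[k\{1 - 2g(1/2) + \dot g(1/2)\}^2]$; the term $\tfrac{1}{2}\tau^2$ produces exactly $B_2(k/n)^{4/d}$, since the two powers of $\dot h(1/2)$ in $\tau^2$ cancel with the $\dot h(1/2)^{-2}$ prefactor, and $\bar f(x_0)^{-4/d}$ combines with the coarea weight $\bar f(x_0)$ to give $\bar f(x_0)^{1-4/d}$.

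For $x \in (\mathcal{S}^\epsilon)^c$, Hoeffding's inequality shows that $\mathbb{P}\{\tilde C^{k\mathrm{nn}}(x) \neq C^{\mathrm{Bayes}}(x)\}$ decays exponentially in $k$ wherever $|\tilde\eta(x) - 1/2|$ is bounded below; combined with the moment condition A4($\alpha$) and the tail estimates of Cannings, Berrett and Samworth (2017), the contribution from this region is $o(1/k + (k/n)^{4/d})$ in case~(i) with $\alpha > 4d/(d-4)$, and only $o((k/n)^{\alpha/(\alpha+d) - \epsilon})$ in case~(ii) where the weaker moment condition forces the explicit bias term to be absorbed into the residual. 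The main technical obstacle is making the Gaussian approximation, bias expansion and coupling of the random nearest-neighbour indices with the conditionally independent noisy labels rigorous and uniform in $k \in K_\beta$; a second delicate point is verifying the identity $\ddot h(1/2) = 0$, without which $B_2$ would inherit an additional $g$-dependent factor and the clean form of the stated expansion would be destroyed.
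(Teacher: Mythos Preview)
Your proposal is correct and identifies the essential algebraic facts---in particular $\dot h(1/2)=1-2g(1/2)+\dot g(1/2)$ and the crucial identity $\ddot h(1/2)=0$, which forces $\tilde a(x_0)=\dot h(1/2)\,a(x_0)$ on $\mathcal S$ and hence leaves $B_2$ unchanged. Your heuristic computation of the variance and bias contributions via the coarea substitution and the Taylor expansion of $F(c)+F(-c)$ is sound.

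However, the paper takes a substantially different and more economical route. Rather than re-deriving the Gaussian approximation, the bias expansion for the nearest-neighbour averages, and the tail estimates from scratch, the authors proceed in two steps. First, they verify that the corrupted distribution $\tilde P$ itself satisfies A1--A4($\alpha$) (with $\tilde{\mathcal S}=\mathcal S$ and, on $\mathcal S$, $\dot{\tilde\eta}=\dot h(1/2)\dot\eta$ and $\ddot{\tilde\eta}=\dot h(1/2)\ddot\eta$), so that Theorem~1 of Cannings, Berrett and Samworth (2017) applies directly to $\tilde R(\tilde C^{k\mathrm{nn}})-\tilde R(\tilde C^{\mathrm{Bayes}})$, yielding $\tilde B_1/k+\tilde B_2(k/n)^{4/d}+o(\cdot)$ with $\tilde B_1=B_1/\dot h(1/2)$ and $\tilde B_2=\dot h(1/2)B_2$. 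Second, they show that
\[
R(\tilde C^{k\mathrm{nn}})-R(C^{\mathrm{Bayes}})
=\frac{\tilde R(\tilde C^{k\mathrm{nn}})-\tilde R(\tilde C^{\mathrm{Bayes}})}{\dot h(1/2)}
+o\bigl(\tilde R(\tilde C^{k\mathrm{nn}})-\tilde R(\tilde C^{\mathrm{Bayes}})\bigr)
+O\bigl((k/n)^{\alpha/(\alpha+d)-v}\bigr),
\]
by observing that the integrand difference $(2\eta-1)-(2\tilde\eta-1)/\dot h(1/2)=:G(\eta-1/2)$ satisfies $G(0)=\dot G(0)=0$ (your $\ddot h(1/2)=0$ is their $\ddot G(0)=0$), so on a shrinking tube $\mathcal S^{\epsilon_n}$ this difference is $O\bigl((\eta-1/2)^2\bigr)$ and can be absorbed; outside the tube a Hoeffding bound handles the remainder. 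Dividing then gives $B_1/\{k\dot h(1/2)^2\}+B_2(k/n)^{4/d}$ exactly.

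What each approach buys: yours is a direct, self-contained derivation, but commits you to reproving all the delicate uniformity statements of Cannings, Berrett and Samworth (2017)---the coupling of neighbour indices, the uniform Berry--Esseen step, the moment-condition tail bounds---in a slightly modified setting. The paper's reduction treats that theorem as a black box applied to $\tilde P$, isolating the only genuinely new work as the comparison of the two excess risks via the second-order vanishing of $G$; this is both shorter and makes transparent why the bias term is \emph{exactly} unaffected by the noise.
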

The proof of Theorem~\ref{thm:knnhet} is given in Section~\ref{sec:knnproofs}, and involves two key ideas.  First, we demonstrate that the conditions assumed for $\eta$ also hold for the corrupted regression function~$\tilde{\eta}$.  Second, we show that the dominant asymptotic contribution to the desired excess risk $R(\tilde{C}^{k\mathrm{nn}}) - R(C^{\mathrm{Bayes}})$ is $\{\tilde{R}(\tilde{C}^{k\mathrm{nn}}) - \tilde{R}(\tilde{C}^{\mathrm{Bayes}})\}/\{1-2g(1/2) + \dot{g}(1/2)\}$, a scalar multiple of the excess risk when predicting a noisy label.  We then conclude the argument by appealing to \citet[Theorem~1]{CBS:2017}, and of course, can recover the conclusion of that result for noiseless labels as a special case of Theorem~\ref{thm:knnhet} by setting $g = 0$. 

In the conclusion of Theorem~\ref{thm:knnhet}(i), the terms $B_1/[k\{1- 2g(1/2) + \dot{g}(1/2)\}^{2}]$ and $B_{2}(k/n)^{4/d}$ can be thought of as the leading order contributions to the variance and squared bias of the corrupted $k$nn classifier respectively.  It is both surprising and interesting to note that the type of label noise considered here affects only the leading order variance term compared with the noiseless case; the dominant bias term is unchanged.  To give a concrete example, $\rho$-homogeneous noise satisfies the conditions of Theorem~\ref{thm:knnhet}, and in the setting of Theorem~\ref{thm:knnhet}(i), we see that the dominant variance term is inflated by a factor of $(1-2\rho)^{-2}$.

We now quantify the relative asymptotic performance of the corrupted $k$nn and uncorrupted $k$nn classifiers.  Since this performance depends on the choice of~$k$ in each case, we couple these choices together in the following way: given any $k$ to be used by the uncorrupted classifier $C^{k\mathrm{nn}}$, and given the function $g$ from Theorem~\ref{thm:knnhet}, we consider the choice
\begin{equation}
\label{Eq:kg}
k_g =  \bigl\lfloor \{1-2g(1/2) + \dot{g}(1/2)\}^{-2d/(d+4)} k \bigr\rfloor
\end{equation}
for the noisy label classifier $\tilde{C}^{k\mathrm{nn}}$.  This coupling reflects the ratio of the optimal choices of~$k$ for the corrupted and uncorrupted label settings.
\begin{corollary} 
\label{cor:knnRR}
Under the assumptions of Theorem~\ref{thm:knnhet}(i), and provided that $B_2 > 0$, we have that for any $\beta \in (0,1/2)$,
\begin{equation} 
\label{eq:RR}
\frac{R(\tilde{C}^{k_g\mathrm{nn}}) - R(C^{\mathrm{Bayes}})}{R(C^{k\mathrm{nn}}) - R(C^{\mathrm{Bayes}})}  
\rightarrow \{1 - 2g(1/2) + \dot{g}(1/2)\}^{-8/(d+4)},
\end{equation} 
as $n \rightarrow \infty$, uniformly for $k \in K_\beta$.
\end{corollary}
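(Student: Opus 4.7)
The plan is to apply Theorem~\ref{thm:knnhet}(i) to both the numerator and the denominator (the denominator being the $g\equiv 0$ special case noted in the paragraph after that theorem), and then to perform a direct arithmetic simplification using the specific coupling $k_g = \lfloor ck \rfloor$ with $c := \gamma^{-2d/(d+4)}$, where $\gamma := 1 - 2g(1/2) + \dot{g}(1/2) > 0$ by the hypothesis $\dot{g}(1/2) > 2g(1/2) - 1$. The choice of $k_g$ is engineered precisely so that the variance and bias contributions to the corrupted excess risk acquire a common scaling factor, which is the essence of the computation.

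First I would check that Theorem~\ref{thm:knnhet}(i) may be invoked at $k_g$: since $ck - 1 \leq k_g \leq ck$ and $k \geq \lceil (n-1)^\beta \rceil \to \infty$, for any $\beta' \in (0,\beta)$ and all sufficiently large $n$, $k \in K_\beta$ forces $k_g \in K_{\beta'}$, and moreover $k_g = ck\{1+o(1)\}$ uniformly for $k \in K_\beta$. Applying the theorem then gives
\[
R(\tilde{C}^{k_g\mathrm{nn}}) - R(C^{\mathrm{Bayes}}) = \frac{B_1}{k_g \gamma^{2}} + B_2 \Bigl(\frac{k_g}{n}\Bigr)^{4/d} + o\biggl(\frac{1}{k_g} + \Bigl(\frac{k_g}{n}\Bigr)^{4/d}\biggr),
\]
while its $g\equiv 0$ specialization gives
\[
R(C^{k\mathrm{nn}}) - R(C^{\mathrm{Bayes}}) = \frac{B_1}{k} + B_2 \Bigl(\frac{k}{n}\Bigr)^{4/d} + o\biggl(\frac{1}{k} + \Bigl(\frac{k}{n}\Bigr)^{4/d}\biggr),
\]
both uniformly for $k \in K_\beta$. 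The crucial arithmetic identities are $c\gamma^{2} = \gamma^{2 - 2d/(d+4)} = \gamma^{8/(d+4)}$ and $c^{4/d} = \gamma^{-8/(d+4)}$, which yield $1/(k_g\gamma^{2}) = \gamma^{-8/(d+4)} k^{-1}\{1+o(1)\}$ and $(k_g/n)^{4/d} = \gamma^{-8/(d+4)}(k/n)^{4/d}\{1+o(1)\}$. Substituting these into the numerator expansion shows that both the leading variance and leading bias terms scale by the common factor $\gamma^{-8/(d+4)}$, so the numerator equals $\gamma^{-8/(d+4)}$ times the leading part of the denominator, modulo an additive remainder of order $o(k^{-1} + (k/n)^{4/d})$.

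Dividing then delivers the stated limit. The only delicate point, and the reason the assumption $B_2 > 0$ is required, is that to convert the additive $o(k^{-1} + (k/n)^{4/d})$ remainder into a multiplicative $1+o(1)$ error on the ratio, one needs the denominator to be bounded below by a strictly positive constant multiple of $k^{-1} + (k/n)^{4/d}$ uniformly over $k \in K_\beta$. Since both $B_1$ and $B_2$ are positive, this lower bound is immediate from the denominator expansion once $n$ is large enough that the remainder does not swamp the leading terms. This is the main (and mild) obstacle; the rest of the proof is routine, with the real work already done in establishing Theorem~\ref{thm:knnhet}.
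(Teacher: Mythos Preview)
Your argument is correct and is precisely the computation the paper leaves implicit: the paper does not spell out a separate proof of this corollary, treating it as an immediate consequence of Theorem~\ref{thm:knnhet}(i) applied once with the given $g$ at $k_g$ and once with $g\equiv 0$ at $k$, together with the coupling~\eqref{Eq:kg}. Your check that $k_g \in K_{\beta'}$ for $\beta' \in (0,\beta)$ and large $n$, the arithmetic identities $c\gamma^2 = c^{-4/d} = \gamma^{8/(d+4)}$, and the use of $B_1,B_2>0$ to convert the additive remainder into a multiplicative $1+o(1)$ on the ratio are exactly the ingredients needed.
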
 
If $\dot{g}(1/2) > 2 g(1/2)$, then the limiting regret ratio in \eqref{eq:RR} is less than 1 -- this means that the label noise helps in terms of the asymptotic performance!  This is due to the fact that, under the noise model in Theorem~\ref{thm:knnhet}, if $\dot{g}(1/2) > 2 g(1/2)$ then for points $X_i$ with $\eta(X_i)$ close to $1/2$, the noisy labels $\tilde{Y}_{i}$ are more likely than the true labels $Y_i$ to be equal to the Bayes labels, $\mathbbm{1}_{\{\eta(X_{i}) \geq 1/2\}}$.  To understand this phenomenon, first note that by rearranging~\eqref{eq:tildeeta}, we have
\begin{align*}
  \tilde{\eta}(x) - 1/2 = \{\eta(x) - 1/2\}\{1-\rho_0(x) - \rho_1(x)\} + \frac{1}{2}\{\rho_0(x) - \rho_1(x)\}.
\end{align*}
Thus $\tilde \eta(x)-1/2 = \eta(x)-1/2$ for $x \in \mathcal{S}$ using B1.  On the other hand, for $x\in \mathcal{S}^c$,  we have
\begin{equation}\label{eq: 001}
\tilde {\eta}(x) - 1/2 = \{\eta(x) - 1/2\} \biggl(1-\rho_0(x) - \rho_1(x) + \frac{\rho_0(x) - \rho_1(x)}{2\eta(x)-1}\biggr). 
\end{equation}
We next study the term in the second parentheses on the right-hand side above.  Write $t = \eta(x) - 1/2$. Then, for $x$ such that $|\eta(x) -1/2|  \in (0,\delta)$, we have  $\rho_0(x) = g(1/2+t)$ and $\rho_1(x) = g(1/2-t)$. It follows, for such $x$, that
\begin{align*}
 1 \! - \! \rho_0(x)\! -\! \rho_1(x) \! + \!\frac{\rho_0(x) - \rho_1(x)}{2\eta(x)-1} &= 1-g(1/2+t) - g(1/2-t) + \frac{g(1/2+t)-g(1/2-t)}{2t} \\
  &\rightarrow 1 - 2g(1/2) + \dot{g}(1/2)
\end{align*}
as $|t|\searrow 0$. Since $1-2g(1/2)+\dot{g}(1/2)>1$, we obtain that for any $\varepsilon \in \bigl(0, \dot{g}(1/2)/2-g(1/2)\bigr)$, there exists $\delta_0 \in (0, \delta)$ such that for all $x$ with $|\eta(x) - 1/2|\in (0,\delta_0)$, we have that 
\[
1-\rho_0(x) - \rho_1(x) + \frac{\rho_0(x) - \rho_1(x)}{2(\eta(x)-1/2)} > 1 -2g(1/2) + \dot{g}(1/2)-\varepsilon>1. 
\]
This together with \eqref{eq: 001} ensures that, for all $x$ such that $|\eta(x)-1/2| \in (0,\delta_0)$, we have 
\[
|\tilde\eta(x)-1/2| > |\eta(x)-1/2|.
\]
 
 \begin{example} 
 	\emph{Suppose that for some $g_0 \in (0,1/2)$ and $h_0 > 2 - 1/g_0$ we have $g(1/2 + t) = g_0(1 + h_0 t)$ for $t \in (-\delta,\delta)$.  Then $g(1/2) = g_0$ and $\dot{g}(1/2) = g_0 h_0$,  which gives $1 - 2g(1/2) + g'(1/2) = 1 + (h_0 - 2) g_0 $.   We therefore see from Corollary~\ref{cor:knnRR} that if $h_0 < 2$, then the limiting regret ratio is greater than $1$, but if $h_0 > 2$, then the limiting regret ratio is less than one, so the label noise aids performance. }
 \end{example} 

\subsection{Support vector machine classifiers} 
\label{sec:SVM}
In general, the term \textit{support vector machines} (SVM) refers to classifiers of the form
\[
C^{\mathrm{SVM}}(x) = C_n^{\mathrm{SVM}}(x) = \left\{ \begin{array}{ll} 1& \text{\quad if } \hat{f} (x) \geq 0
\\ 0 & \text{\quad otherwise,} \end{array} \right.
\]
where the \textit{decision function} $\hat{f}$ satisfies 
\begin{equation*}
\label{eq:SVMopt}
\hat{f} \in \argmin_{f \in H} \biggl\{\frac{1}{n} \sum_{i = 1}^{n} L(Y_{i}, f(X_{i})) + \Omega(\lambda, \|f\|_{H})\biggr\}.
\end{equation*} 
See, for example, \citet{Cortes:1995} and \citet{Steinwart:2008}.  Here $L: \mathbb{R} \times \mathbb{R} \rightarrow \mathbb{R}$ is a loss function, $\Omega : \mathbb{R} \times \mathbb{R} \rightarrow \mathbb{R}$ is a regularization function,  $
\lambda > 0$ is a tuning parameter and $H$ is a \textit{reproducing kernel Hilbert space} (RKHS) with norm $\|\cdot\|_{H}$ \citep[Chapter~4]{Steinwart:2008}. 

We focus throughout on the \emph{L1-SVM}, where $L(y, t) = \max\{0, 1 - (2y-1)t\}$ is the \textit{hinge loss} function and $\Omega(\lambda, t) = \lambda t^{2}$.  Let $K : \mathbb{R}^d \times \mathbb{R}^d \rightarrow \mathbb{R}$ be the positive definite kernel function associated with the RKHS. We consider the Gaussian radial basis function, namely $K(x,x') = \exp(-\sigma^2 \|x - x'\|^{2} )$, for $\sigma > 0$.  The corrupted SVM classifier is
\begin{equation}
\label{eq:SVMK}
\tilde{C}^{\mathrm{SVM}}(x) = \tilde{C}_n^{\mathrm{SVM}}(x) = \left\{ \begin{array}{ll} 1& \text{\quad if } \tilde{f}(x) \geq 0 
\\ 0 & \text{\quad otherwise,} \end{array} \right.
\end{equation}
where
\begin{equation}
\label{eq:SVMopttilde}
\tilde{f} \in \argmin_{f \in H} \biggl\{\frac{1}{n} \sum_{i = 1}^{n}  \max\{0, 1 - (2\tilde{Y}_i-1)f(X_{i})\} + \lambda \|f\|^2_{H}\biggr\}.
\end{equation}

\citet[][Corollary~3.6 and Example~3.8]{Steinwart:2005} show that the uncorrupted L1-SVM classifier is consistent as long as $P_X$ is compactly supported and $\lambda = \lambda_{n}$ is such that $\lambda_{n} \rightarrow 0$ but $n\lambda_{n}/(|\log{\lambda_{n}} |^{d+1}) \rightarrow \infty$.  Therefore, under these conditions, provided that~\eqref{eq:symmetric} holds and $P_X(\tilde{\mathcal{S}} \setminus \mathcal{S}) = 0$, by Corollary~\ref{cor:consistent}, we have that  $R(\tilde{C}^{\mathrm{SVM}}) \rightarrow R(C^{\mathrm{Bayes}})$ as $n \rightarrow \infty$. 

Under further conditions on the noise probabilities and the distribution $P$, we can also provide more precise control of the excess risk for the SVM classifier.  Our analysis will make use of the results in \citet{Steinwart:2007}, who study the rate of convergence of the SVM classifier with Gaussian kernels in the noiseless label setting.  Other works of note on the rate of convergence of SVM classifiers include \citet{Lin:1999} and  \citet{Blanchard:2008}; see also \citet[Chapters~6 and~8]{Steinwart:2008}.  

We recall two definitions used in the perfect labels context. The first of these is the well-known \textit{margin assumption} of, for example, \citet{Audibert:2007}.  We say that the distribution $P$ satisfies the margin assumption with parameter $\gamma_1 \in [0,\infty)$ if there exists $\kappa_{1}>0$ such that
\[
P_{X}(\{x \in \mathbb{R}^{d} : 0 < |\eta(x) - 1/2| \leq t\}) \leq \kappa_{1} t^{\gamma_1}
\]
for all $t>0.$  If $P$ satisfies the margin assumption for all $\gamma_1 \in [0,\infty)$ then we say $P$ satisfies the margin assumption with parameter $\infty$.  The margin assumption controls the probability mass of the region where $\eta$ is close to $1/2$.

The second definition we need is that of the \textit{geometric noise exponent} \citep[Definition~2.3]{Steinwart:2007}.  Let $\mathcal{S}_{+} = \{x\in \mathbb{R}^{d} : \eta(x) > 1/2\}$ and $\mathcal{S}_{-} = \{x\in \mathbb{R}^{d} : \eta(x) < 1/2\}$, and for $x \in \mathbb{R}^{d}$, let $\tau_{x} = \inf_{x' \in \mathcal{S} \cup \mathcal{S}_{+}} \|x - x'\| + \inf_{x' \in \mathcal{S} \cup \mathcal{S}_{-}} \|x - x'\|$.  We say that the distribution $P$ has geometric noise exponent $\gamma_2 \in [0,\infty)$ if there exists $\kappa_2>0$, such that
\[
\int_{\mathbb{R}^{d}} |2\eta(x) - 1| \exp\Bigl(-\frac{\tau_{x}^{2}}{t^2}\Bigr) \, dP_{X}(x) \leq \kappa_{2}t^{\gamma_2 d}
\]
for all $t>0.$  If $P$ has geometric noise exponent $\gamma_2$ for all $\gamma_2 \in [0,\infty)$ then we say it has geometric noise exponent $\infty$.

Under these two conditions, \citet[Theorem 2.8]{Steinwart:2007} show that, if $P_X$ is supported on the closed unit ball, then for appropriate choices of the tuning parameters, the SVM classifier 
achieves a convergence rate of  $O(n^{-\Gamma + \epsilon})$ for every $\epsilon >0$, where 
\[
\Gamma = \left\{ \begin{array}{ll} 
\frac{\gamma_2}{2\gamma_2 + 1} & \text{\quad if } \gamma_2 \leq \frac{\gamma_1 + 2}{2\gamma_1} \\
\frac{2\gamma_2(\gamma_1+1)}{2\gamma_2(\gamma_1 + 2) + 3 \gamma_1 + 4} & \text{\quad otherwise.} \end{array} \right.
\]

In the imperfect labels setting, and under our stronger assumption on the noise mechanism when $\eta$ is close to $1/2$, we see that the SVM classifier trained with imperfect labels satisfies the same bound on the rate of convergence as in the perfect labels case. 
\begin{theorem}
\label{thm:SVMhet}
Suppose that $P$ satisfies the margin assumption with parameter $\gamma_1 \in [0, \infty]$, has geometric noise exponent $\gamma_2 \in (0,\infty)$ and that $P_X$ is supported on the closed unit ball.  Assume the conditions of Theorem~\ref{thm:hetnoise}(ii) and B1 holds.  Then
\[
R(\tilde{C}^{\mathrm{SVM}}) - R(C^{\mathrm{Bayes}}) = O(n^{-\Gamma+\epsilon}),
\]
as $n \rightarrow \infty$, for every $\epsilon > 0$.  If $\gamma_2 = \infty$, then the same conclusion holds provided $\sigma_n = \sigma$ is a constant with $\sigma > 2d^{1/2}$. 
\end{theorem}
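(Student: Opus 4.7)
The overall plan is to transfer the analysis to the corrupted distribution $\tilde{P}$ and then invoke \citet[Theorem~2.8]{Steinwart:2007}. By Theorem~\ref{thm:hetnoise}(ii),
\[
R(\tilde{C}^{\mathrm{SVM}}) - R(C^{\mathrm{Bayes}}) \leq \frac{\tilde{R}(\tilde{C}^{\mathrm{SVM}}) - \tilde{R}(\tilde{C}^{\mathrm{Bayes}})}{(1-2\rho^*)(1-a^*)},
\]
and since $\tilde{C}^{\mathrm{SVM}}$ is the L1-SVM classifier trained on an i.i.d.\ sample from $\tilde{P}$, it suffices to verify the two structural hypotheses required by the Steinwart--Scovel rate: that $\tilde{P}$ satisfies the margin assumption with parameter $\gamma_1$ and has geometric noise exponent at least $\gamma_2$. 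To avoid $P_X$-null-set technicalities in the definition of $\tilde{\tau}_x$, I would first modify $\rho_0, \rho_1$ on the $P_X$-null set on which the Theorem~\ref{thm:hetnoise}(ii) conditions fail, so that those conditions hold pointwise. This modification leaves the distribution of the training sample unchanged, and it does not affect $\tau_x$ (which depends only on $\eta$).

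Both structural conditions follow from the identity
\[
\tilde{\eta}(x) - \tfrac{1}{2} = \bigl(\eta(x) - \tfrac{1}{2}\bigr)\{1 - \rho_0(x) - \rho_1(x)\}\left\{1 - \frac{\rho_1(x) - \rho_0(x)}{\{2\eta(x)-1\}\{1-\rho_0(x)-\rho_1(x)\}}\right\},
\]
valid on $\mathcal{B} = \mathcal{S}^c$ and obtained by rearranging~\eqref{eq:tildeeta}. The two braced factors are bounded below by $1-2\rho^*>0$ and $1-a^*>0$ respectively, so $\tilde{\eta}(x) - 1/2$ shares the sign of $\eta(x) - 1/2$ and $|\tilde{\eta}(x) - 1/2| \geq (1-2\rho^*)(1-a^*)|\eta(x)-1/2|$. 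The lower bound yields the margin assumption for $\tilde{P}$ with parameter $\gamma_1$ and constant $\kappa_1\{(1-2\rho^*)(1-a^*)\}^{-\gamma_1}$, simply by rescaling $t$ in the margin condition for $P$. The sign identity gives $\tilde{\mathcal{S}}_{\pm} = \mathcal{S}_{\pm}$ and $\tilde{\mathcal{S}} = \mathcal{S}$, so the distance function $\tilde{\tau}_x$ associated with $\tilde{P}$ coincides with $\tau_x$ for every $x$.

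The geometric noise exponent step is the crux of the argument, and is where B1 enters essentially. Using $|2\tilde{\eta}-1| \leq |2\eta-1|(1-\rho_0-\rho_1) + |\rho_0-\rho_1|$ together with $\tilde{\tau}_x = \tau_x$, it suffices to establish a pointwise bound of the form $|\rho_0(x) - \rho_1(x)| \leq M|2\eta(x)-1|$. Since $g$ is differentiable at $1/2$, there exists $\delta'\in(0,\delta]$ with $|g(1/2+h)-g(1/2)| \leq (|\dot{g}(1/2)|+1)|h|$ whenever $|h|\leq\delta'$; hence on $\{|\eta(x)-1/2|<\delta'\}$, B1 and the triangle inequality give
\[
|\rho_0(x) - \rho_1(x)| = |g(\eta(x)) - g(1-\eta(x))| \leq (|\dot{g}(1/2)|+1)|2\eta(x) - 1|.
\]
On the complement $\{|\eta(x)-1/2|\geq\delta'\}$, the crude bound $|\rho_0-\rho_1|\leq 1 \leq (2\delta')^{-1}|2\eta(x)-1|$ suffices, yielding a global $M$. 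Integrating against $\exp(-\tau_x^2/t^2)\,dP_X(x)$ and using the geometric noise exponent of $P$ then gives
\[
\int_{\mathbb{R}^d} |2\tilde{\eta}(x)-1|\exp(-\tilde{\tau}_x^2/t^2)\,dP_X(x) \leq (1+M)\kappa_2 t^{\gamma_2 d},
\]
so $\tilde{P}$ has geometric noise exponent $\gamma_2$ with constant $(1+M)\kappa_2$; the case $\gamma_2 = \infty$ follows by applying the argument to every finite $\gamma_2'$.

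With the margin assumption and geometric noise exponent for $\tilde{P}$ in hand, \citet[Theorem~2.8]{Steinwart:2007} applied to the L1-SVM trained on the sample from $\tilde{P}$ (with the Gaussian kernel and the bandwidth prescribed there, and with $\sigma > 2d^{1/2}$ in the $\gamma_2 = \infty$ case) yields $\tilde{R}(\tilde{C}^{\mathrm{SVM}}) - \tilde{R}(\tilde{C}^{\mathrm{Bayes}}) = O(n^{-\Gamma+\epsilon})$, and combining with the first display gives the theorem. The main obstacle is the geometric noise exponent transfer: one needs the pointwise sign identification (so that $\tilde{\tau}_x = \tau_x$) together with the Lipschitz-type bound $|\rho_0-\rho_1|\lesssim|2\eta-1|$, both of which depend crucially on the structural form imposed by B1.
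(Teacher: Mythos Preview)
Your approach matches the paper's essentially line for line: verify that $\tilde{P}$ inherits the margin parameter $\gamma_1$ via the lower bound $|2\tilde\eta-1|\geq (1-2\rho^*)(1-a^*)|2\eta-1|$, transfer the geometric noise exponent by splitting into $\{|\eta-1/2|<\delta'\}$ (where B1 gives $|\rho_0-\rho_1|\lesssim|2\eta-1|$) and its complement (crude bound), observe $\tilde\tau_x=\tau_x$, and then invoke Steinwart--Scovel together with Theorem~\ref{thm:hetnoise}(ii). One small omission worth noting: \citet[Theorem~2.8]{Steinwart:2007} delivers a high-probability tail bound on the conditional test error $\tilde L(\tilde C^{\mathrm{SVM}})-\tilde L(\tilde C^{\mathrm{Bayes}})$, not directly a bound on the expected excess risk $\tilde R$; the paper explicitly integrates this exponential tail to pass to the expectation, and you should include that step as well.
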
 

\subsection{Linear discriminant analysis}
\label{sec:LDA}
If $P_0 = N_d(\mu_0, \Sigma)$ and $P_1 = N_d(\mu_1, \Sigma)$, then the Bayes classifier is
\begin{equation}
\label{eq:LDABayes}
C^{\mathrm{Bayes}}(x) = \left\{ \begin{array}{ll} 1  & \mbox{\quad if $\log\bigl(\frac{\pi_1}{\pi_0}\bigr) +  \bigl(x - \frac{{\mu}_1 + {\mu}_0}{2}\bigr)^T {\Sigma}^{-1} ({\mu}_1 - {\mu}_0) \geq 0$}
\\ 0 & \mbox{\quad otherwise.} \end{array} \right.
\end{equation}
The Bayes risk can be expressed in terms of $\pi_0, \pi_1$, and the squared Mahalanobis distance $\Delta^2 = (\mu_1 - \mu_0)^T \Sigma^{-1} (\mu_1 - \mu_0)$ between the classes as 
\[
R(C^{\mathrm{Bayes}}) = \pi_0\Phi\biggl(\frac{1}{\Delta} \log\Bigl(\frac{\pi_1}{\pi_0}\Bigr) - \frac{\Delta}{2}\biggr) +  \pi_1\Phi\biggl(\frac{1}{\Delta}\log\Bigl(\frac{\pi_0}{\pi_1}\Bigr) - \frac{\Delta}{2}\biggr),
\]
where $\Phi$ denotes the standard normal distribution function. 

The LDA classifier is constructed by substituting training data estimates of $\pi_0, \pi_1, \mu_0, \mu_1,$ and $\Sigma$ in to \eqref{eq:LDABayes}.   With imperfect training data labels, and for $r = 0,1$, we define estimates $\hat{\pi}_r = n^{-1}\sum_{i=1}^n \mathbbm{1}_{\{\tilde{Y}_i = r\}}$ of $\pi_r$, as well as estimates $\hat{\mu}_r = \sum_{i = 1}^n X_i \mathbbm{1}_{\{\tilde{Y}_i = r\}}/\sum_{i=1} ^n \mathbbm{1}_{\{\tilde{Y}_i = r\}}$ of the class-conditional means $\mu_r$, and set
\[
 \hat{\Sigma} = \frac{1}{n-2} \sum_{i = 1}^{n} \sum_{r = 0}^1 (X_i -  \hat{\mu}_r) ( X_i -  \hat{\mu}_r)^T \mathbbm{1}_{\{\tilde{Y}_i = r\}}.
\] 
This allows us to define the corrupted LDA classifier
\[
\tilde{C}^{\mathrm{LDA}}(x) = \tilde{C}_n^{\mathrm{LDA}}(x) = \left\{ \begin{array}{ll} 1  & \text{\quad if } \log\bigl(\frac{\hat{\pi}_1}{\hat{\pi}_0}\bigr) +  \bigl(x - \frac{\hat{\mu}_1 + \hat{\mu}_0}{2}\bigr)^T \hat{\Sigma}^{-1} (\hat{\mu}_1 - \hat{\mu}_0) \geq 0 
\\ 0 & \text{\quad otherwise.} \end{array} \right.
\]
Consider now the $\rho$-homogeneous noise setting.  In this case, writing $\tilde{P}_{r}$, $r\in \{0,1\}$, for the distribution of $X \mid \{\tilde{Y} = r\}$, we have $\tilde{P}_r =  p_r N_d(\mu_r, \Sigma) +  (1-p_r) N_d(\mu_{1-r}, \Sigma)$, where $p_r = \pi_r  (1-\rho)/\{\pi_r (1-\rho) + \pi_{1-r}\rho\}$. Notice that while $\hat{\pi}_r, \hat{\mu}_r$ and $\hat{\Sigma}$ are intended to be estimators of $\pi_r, \mu_r$ and $\Sigma$, respectively, with label noise these will in fact be consistent estimators of $\tilde{\pi}_r =  \pi_r (1-\rho) + \pi_{1-r} \rho$, $\tilde{\mu}_r = p_r \mu_r +  (1-p_r) \mu_{1-r}$, and $\tilde{\Sigma} = \Sigma + \alpha(\mu_1-\mu_0)(\mu_1-\mu_0)^T$, respectively, where $\alpha > 0$ is given in the proof of Theorem~\ref{thm:LDAhomo}.

We will also make use of the following well-known lemma in the homogeneous label noise case \citep[e.g.][Theorem~1]{Ghosh:2015}, which holds for an arbitrary classifier and data generating distribution.  We include the short proof for completeness.
\begin{lemma}
\label{lem:tilde}
For $\rho$-homogeneous noise with $\rho\in [0,1/2)$ and for any classifier $C$, we have $R(C) = \{\tilde{R}(C) - \rho\}/(1- 2\rho)$.  Moreover, $R(C)  - R(C^{\mathrm{Bayes}})  =  \bigl\{\tilde{R}(C)  - \tilde{R}(C^{\mathrm{Bayes}})\bigr\}/(1- 2\rho)$.
\end{lemma}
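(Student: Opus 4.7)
The plan is to express the corrupted label via the augmented representation given in Section~\ref{sec:setting}, namely $\tilde{Y} = ZY + (1-Z)(1-Y)$ with $Z \mid \{X=x, Y=r\} \sim \mathrm{Bin}(1, 1-\rho_r(x))$. The crucial feature of $\rho$-homogeneous noise is that $\rho_0(x) = \rho_1(x) = \rho$ does not depend on $(x,r)$, so in fact $Z$ is independent of $(X,Y)$ with $\mathbb{P}(Z=1) = 1-\rho$. This independence is what makes the whole computation collapse to a one-line linear identity.

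For the first statement, I would condition on $Z$ to evaluate $\tilde{R}(C) = \mathbb{P}\{C(X) \neq \tilde{Y}\}$. On the event $\{Z=1\}$ we have $\tilde{Y} = Y$, so $\{C(X) \neq \tilde{Y}\} = \{C(X) \neq Y\}$; on the event $\{Z=0\}$ we have $\tilde{Y} = 1-Y$, so $\{C(X) \neq \tilde{Y}\} = \{C(X) = Y\}$. Using the independence of $Z$ from $(X,Y)$, this gives
\begin{equation*}
\tilde{R}(C) = (1-\rho)\,\mathbb{P}\{C(X) \neq Y\} + \rho\,\mathbb{P}\{C(X) = Y\} = (1-2\rho)R(C) + \rho,
\end{equation*}
which rearranges (using $\rho < 1/2$) to $R(C) = \{\tilde{R}(C) - \rho\}/(1-2\rho)$.

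For the second statement I would simply apply the first identity twice, once to $C$ and once to $C^{\mathrm{Bayes}}$, and subtract; the additive constant $\rho/(1-2\rho)$ cancels, leaving
\begin{equation*}
R(C) - R(C^{\mathrm{Bayes}}) = \frac{\tilde{R}(C) - \tilde{R}(C^{\mathrm{Bayes}})}{1-2\rho}.
\end{equation*}
There is no serious obstacle here; the only thing to be careful about is justifying that $Z$ is truly independent of $(X,Y)$, which follows because $\rho_r(x)$ is constant in both $x$ and $r$, so the conditional distribution of $Z$ given $(X,Y)$ does not depend on $(X,Y)$. Note that the statement involves $\tilde{R}(C^{\mathrm{Bayes}})$ rather than $\tilde{R}(\tilde{C}^{\mathrm{Bayes}})$, so the argument does not even require us to verify that the two Bayes classifiers coincide (although they do, since $\tilde{\eta}(x) - 1/2 = (1-2\rho)(\eta(x) - 1/2)$ has the same sign as $\eta(x)-1/2$).
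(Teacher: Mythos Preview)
Your proposal is correct and follows essentially the same approach as the paper: condition on the noise indicator $Z$, use the independence of $Z$ from $(X,Y)$ under $\rho$-homogeneous noise to obtain $\tilde{R}(C) = (1-\rho)R(C) + \rho\{1-R(C)\} = \rho + (1-2\rho)R(C)$, and then rearrange; the excess-risk identity follows by subtraction exactly as you describe.
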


The following is the main result of this subsection. 
\begin{theorem} 
\label{thm:LDAhomo}
Suppose that $P_r =  N_d(\mu_r, \Sigma)$ for $r = 0,1$ and that the noise is $\rho$-homogeneous with $\rho \in [0,1/2)$.  Then
\[
  \lim_{n \rightarrow \infty} \tilde{C}^{\mathrm{LDA}}(x) = \left\{ \begin{array}{ll} 1  & \mbox{\quad if $c_0 +  \bigl(x - \frac{\mu_1 + \mu_0}{2}\bigr)^T \Sigma^{-1} (\mu_1 - \mu_0) > 0$}
\\ 0 & \mbox{\quad if $c_0 + \bigl(x - \frac{\mu_1 + \mu_0}{2}\bigr)^T \Sigma^{-1} (\mu_1 - \mu_0) < 0$,} \end{array} \right.
\]
where
\[
c_0 \! = \! \Bigl\{(1-2\rho) + \frac{\rho(1\!-\!\rho)(1 \! + \! \pi_0\pi_1\Delta^2)}{(1-2\rho)\pi_{1}\pi_{0}}   \Bigr\} \log\Bigl(\frac{(1-2\rho)\pi_1 + \rho}{(1-2\rho)\pi_0 + \rho} \Bigr)   - \frac{(\pi_{1} - \pi_{0})\rho(1-\rho)\Delta^{2}}{2\{(1\!-\!2\rho)^{2} \pi_1 \pi_{0} + \rho(1\!-\!\rho)\}} .
\]
As a consequence,
\begin{equation}
  \label{Eq:LimitingLDARisk}
\lim_{n \rightarrow \infty} R(\tilde{C}^{\mathrm{LDA}}) = \pi_0\Phi\biggl(\frac{c_0}{\Delta} - \frac{\Delta}{2}\biggr) +  \pi_1\Phi\biggl(-\frac{c_0}{\Delta} - \frac{\Delta}{2}\biggr) \geq R(C^{\mathrm{Bayes}}).
\end{equation}
For each $\rho \in (0,1/2)$ and $\pi_0 \neq \pi_1$, there exists a unique value of $\Delta > 0$ for which equality in the inequality in~\eqref{Eq:LimitingLDARisk} is attained.
\end{theorem}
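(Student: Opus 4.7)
My plan divides into two phases: (I) pass to the almost-sure limits of $\hat{\pi}_r,\hat{\mu}_r,\hat{\Sigma}$ to identify the limiting discriminant and its risk, and (II) analyse the equation $c_0 = \log(\pi_1/\pi_0)$ that characterises equality in the risk inequality.

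For (I), the strong law gives $\hat{\pi}_r \to \tilde{\pi}_r$ and $\hat{\mu}_r \to \tilde{\mu}_r = p_r\mu_r + (1-p_r)\mu_{1-r}$ almost surely, where $p_r = \pi_r(1-\rho)/\tilde{\pi}_r$. Conditional on $\tilde{Y}=r$, $X$ is a two-component Gaussian mixture, so its within-class covariance is $\Sigma + p_r(1-p_r)(\mu_1-\mu_0)(\mu_1-\mu_0)^T$; averaging over $\tilde{Y}$ yields $\hat{\Sigma}\to \tilde{\Sigma} = \Sigma + \alpha(\mu_1-\mu_0)(\mu_1-\mu_0)^T$ with $\alpha = \pi_0\pi_1\rho(1-\rho)/(\tilde{\pi}_0\tilde{\pi}_1)$. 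Elementary computation gives $\tilde{\mu}_1-\tilde{\mu}_0 = (p_0+p_1-1)(\mu_1-\mu_0)$ and $(\tilde{\mu}_1+\tilde{\mu}_0)/2 = (\mu_1+\mu_0)/2 + \tfrac{1}{2}(p_1-p_0)(\mu_1-\mu_0)$, and Sherman--Morrison inversion gives $\tilde{\Sigma}^{-1}(\mu_1-\mu_0) = (1+\alpha\Delta^2)^{-1}\Sigma^{-1}(\mu_1-\mu_0)$. Substituting into the limiting LDA discriminant and dividing by the positive scalar $(p_0+p_1-1)/(1+\alpha\Delta^2)$ puts it into the claimed form, and the routine identities $p_0+p_1-1 = \pi_0\pi_1(1-2\rho)/(\tilde{\pi}_0\tilde{\pi}_1)$ and $p_1-p_0 = (\pi_1-\pi_0)\rho(1-\rho)/(\tilde{\pi}_0\tilde{\pi}_1)$ match the offset to the $c_0$ in the statement. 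Because the limiting boundary is an affine hyperplane and therefore $P_r$-null, dominated convergence promotes pointwise convergence off this boundary to convergence of risks; conditional on $Y=r$, the limiting affine statistic is Gaussian with mean $c_0+(2r-1)\Delta^2/2$ and variance $\Delta^2$, yielding the stated risk formula.

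For (II), write $L = \log(\pi_1/\pi_0)$ and $h_\Delta(t) = \pi_0\Phi(t-\Delta/2) + \pi_1\Phi(-t-\Delta/2)$. Solving $h_\Delta'(t)=0$ shows $h_\Delta$ has a unique global minimiser at $t^*=L/\Delta$, so the limiting risk equals the Bayes risk iff $c_0 = L$. From phase (I), $c_0$ is \emph{affine} in $\Delta^2$ with slope $c_2 = \rho(1-\rho)\{(1-2\rho)^{-1}\log(\tilde{\pi}_1/\tilde{\pi}_0) - (\pi_1-\pi_0)/(2\tilde{\pi}_0\tilde{\pi}_1)\}$. Introducing $\psi(u)=\log(u/(1-u))$ and $\chi(u) = (2u-1)/(2u(1-u))$ on $(0,1)$, the identity $(\psi-\chi)'(u) = -(1-2u)^2/[2u^2(1-u)^2]$ shows $\psi-\chi$ is strictly decreasing with unique zero at $u=1/2$; since $\pi_0 \neq \pi_1$ forces $\tilde{\pi}_1 \neq 1/2$, we get $c_2 \neq 0$, so $c_0(\Delta^2) = L$ has at most one real solution $\Delta^2_\star$. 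To place $\Delta^2_\star$ in $(0,\infty)$, I would take WLOG $\pi_1 > \pi_0$ (so $\tilde{\pi}_1 > 1/2$); the $\psi-\chi$ analysis then gives $c_2 < 0$, and the complementary inequality $c_0|_{\Delta^2=0} > L$ reduces after simplification to $(1-2u(1-u))\log(u/(1-u)) > 2u-1$ for $u = \tilde{\pi}_1 \in (1/2,1)$. I would prove this by checking that $g(u):=(1-2u(1-u))\log(u/(1-u))-(2u-1)$ satisfies $g(1/2)=0$ and $g'(t) = (4t-2)\log(t/(1-t)) + 1/[t(1-t)] - 4$, both of whose $t$-dependent summands are non-negative on $(0,1)$ and vanish only at $t=1/2$, so $g'>0$ and hence $g>0$ on $(1/2,1)$.

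The main obstacle lies in phase (II), and in particular the positivity of the root. Non-degeneracy of the slope $c_2$ drops out cleanly from the single auxiliary function $\psi-\chi$, but showing $\Delta^2_\star>0$ requires introducing the second auxiliary $g$ and checking that the signs of $c_2$ and $c_0|_{\Delta^2=0}-L$ are compatible; the delicate part is choosing these two one-variable auxiliaries so that their monotonicity properties deliver the required sign agreement uniformly in $\rho\in(0,1/2)$ and in every choice of $\pi_0\neq\pi_1$.
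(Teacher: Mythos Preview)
Your Phase~(I) is correct and matches the paper's argument: both compute the almost-sure limits of $\hat{\pi}_r,\hat{\mu}_r,\hat{\Sigma}$, apply Sherman--Morrison, and read off the limiting hyperplane and risk. Your Phase~(II) also has the right architecture---write $c_0$ as an affine function of $\Delta^2$ and check the slope and intercept signs separately---and your slope analysis is in fact correct and a touch slicker than the paper's. Using $\pi_1-\pi_0=(2\tilde{\pi}_1-1)/(1-2\rho)$ one checks that $c_2=\tfrac{\rho(1-\rho)}{1-2\rho}\bigl\{\psi(\tilde{\pi}_1)-\chi(\tilde{\pi}_1)\bigr\}$, so your $(\psi-\chi)$ device does give $c_2<0$ for $\tilde{\pi}_1>1/2$.

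The gap is in the intercept step. The claim that $c_0\bigl|_{\Delta^2=0}>L$ ``reduces after simplification'' to the single-variable inequality $(1-2u(1-u))\log\!\bigl(u/(1-u)\bigr)>2u-1$ with $u=\tilde{\pi}_1$ is not correct: the quantity $c_0\bigl|_{\Delta^2=0}-L=\tfrac{\tilde{\pi}_0\tilde{\pi}_1}{(1-2\rho)\pi_0\pi_1}\log(\tilde{\pi}_1/\tilde{\pi}_0)-\log(\pi_1/\pi_0)$ depends on \emph{both} $\tilde{\pi}_1$ and $\rho$ (equivalently, on both $\pi_1$ and $\rho$), not on $\tilde{\pi}_1$ alone. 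For instance, at $\tilde{\pi}_1=0.6$ one gets $c_0|_{\Delta^2=0}-L\approx 0.008$ when $\rho=0.1$ but $\approx 0.037$ when $\rho=0.2$; these cannot both equal a function of $\tilde{\pi}_1$ only. So your auxiliary $g$ does not control the intercept, and the positivity of $\Delta^2_\star$ is not established.

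The paper handles this step differently: it fixes $\rho$ and treats
\[
\chi(\pi_1)=\log\Bigl(\tfrac{\pi_1}{1-\pi_1}\Bigr)-\frac{(1-2\rho)^2\pi_1(1-\pi_1)+\rho(1-\rho)}{(1-2\rho)\pi_1(1-\pi_1)}\log\Bigl(\tfrac{(1-2\rho)\pi_1+\rho}{(1-2\rho)(1-\pi_1)+\rho}\Bigr)
\]
as a function of $\pi_1$, computes $\chi(1/2)=0$ and $\dot{\chi}(\pi_1)=\tfrac{\rho(1-\rho)(1-2\pi_1)}{(1-2\rho)\pi_1^2(1-\pi_1)^2}\log\bigl(\tilde{\pi}_1/\tilde{\pi}_0\bigr)<0$ for $\pi_1\neq 1/2$, whence $\chi(\pi_1)<0$ for $\pi_1>1/2$, i.e.\ $c_0|_{\Delta^2=0}>L$. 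Replacing your single-variable reduction by this $\pi_1$-derivative argument (with $\rho$ held fixed) closes the gap; your slope argument can stay as is.
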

The first conclusion of this theorem reveals the interesting fact that, regardless of the level $\rho \in (0,1/2)$ of label noise, the limiting corrupted LDA classifier has a decision hyperplane that is parallel to that of the Bayes classifier; see also \citet{Lachenbruch:1966} and \citet[Corollary~1]{Manwani:2013}.  However, for each fixed $\rho \in (0,1/2)$ and $\pi_0 \neq \pi_1$, there is only one value of $\Delta > 0$ for which the offset is correct and the corrupted LDA classifier is consistent.  

\section{Numerical comparison} 
\label{sec:sims} 
In this section, we investigate empirically how the different types of label noise affect the performance of the $k$-nearest neighbour, support vector machine and linear discriminant analysis classifiers.  We consider two different model settings for the pair $(X,Y)$: 

Model~1: Let $\mathbb{P}(Y=1) = \pi_1 \in \{$0.5, 0.9$\}$ and  $X \mid \{Y = r\} \sim N_{d}(\mu_{r}, I_d)$, where $\mu_{1} = (3/2, 0, \ldots, 0)^T = -\mu_{0} \in \mathbb{R}^{d}$ and $I_d$ denotes the $d$ by $d$ identity matrix.  

Model~2: For $d \geq 2$, let $X \sim U([0,1]^{d})$ and $\mathbb{P}(Y =1 \mid X = x) = \eta(x_1, \ldots, x_d) = \min\{ 4(x_1 - 1/2)^2 +  4(x_2 - 1/2)^2, 1\}$.

In each setting, our risk estimates are based on an uncorrupted test set of size 1000, and we repeat each experiment 1000 times. This ensures that all standard errors are less than 0.4$\%$ and 0.14 for the risk and regret ratio estimates, respectively; in fact, they are often much smaller.
\begin{figure}[ht!]
	\centering
\includegraphics[width=\textwidth]{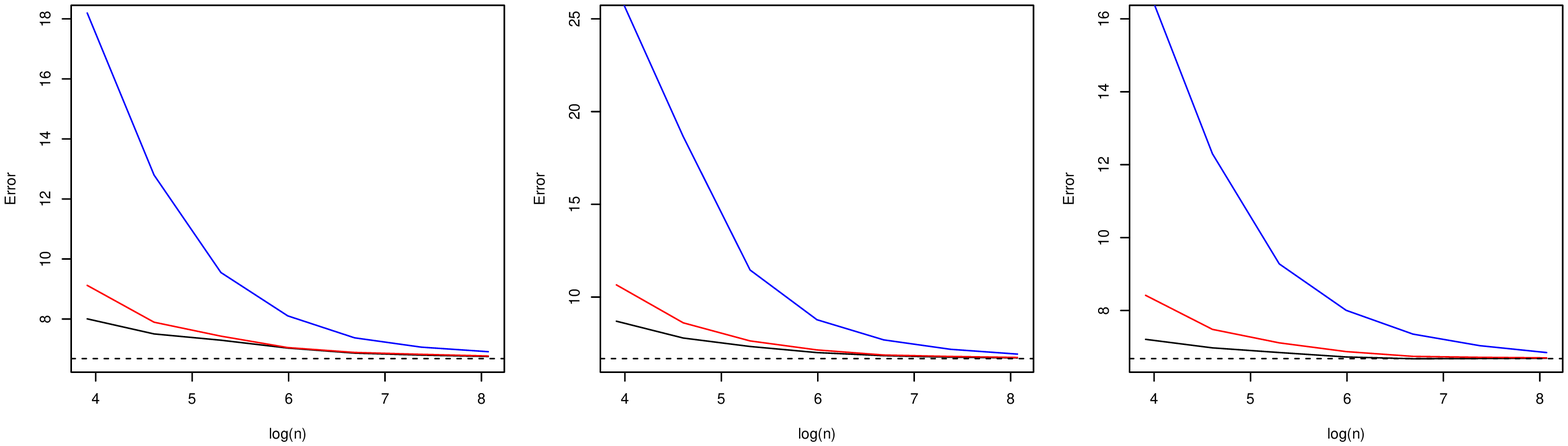}
\includegraphics[width=\textwidth]{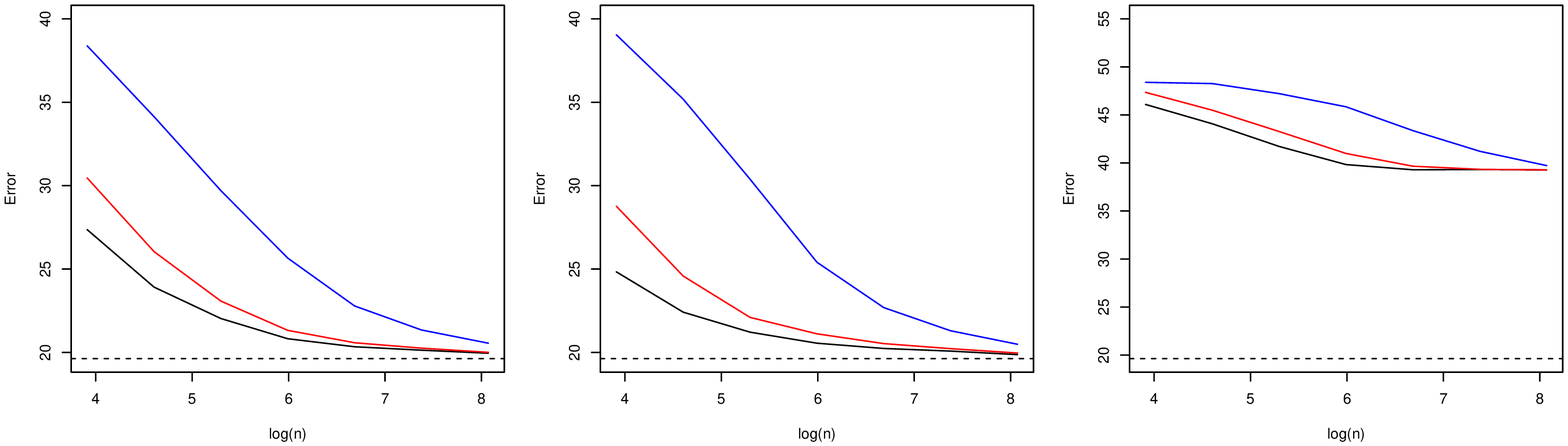}
\caption{{\small Risk estimates for the $k$nn (left), SVM (middle) and LDA (right) classifiers. Top: Model~1, $d = 2$, $\pi_{1} =$ 0.5, Bayes risk = 6.68\%, shown as the black dotted line.  Bottom: Model~2, $d = 2$, Bayes risk = 19.63\%.   We present the results without label noise (black) and with homogeneous label noise at rate $\rho =$ 0.1 (red) and 0.3 (blue). }} 
\label{fig:errorplot1a}
\end{figure}
\begin{figure}[ht!]
\centering
\includegraphics[width=0.61\textwidth]{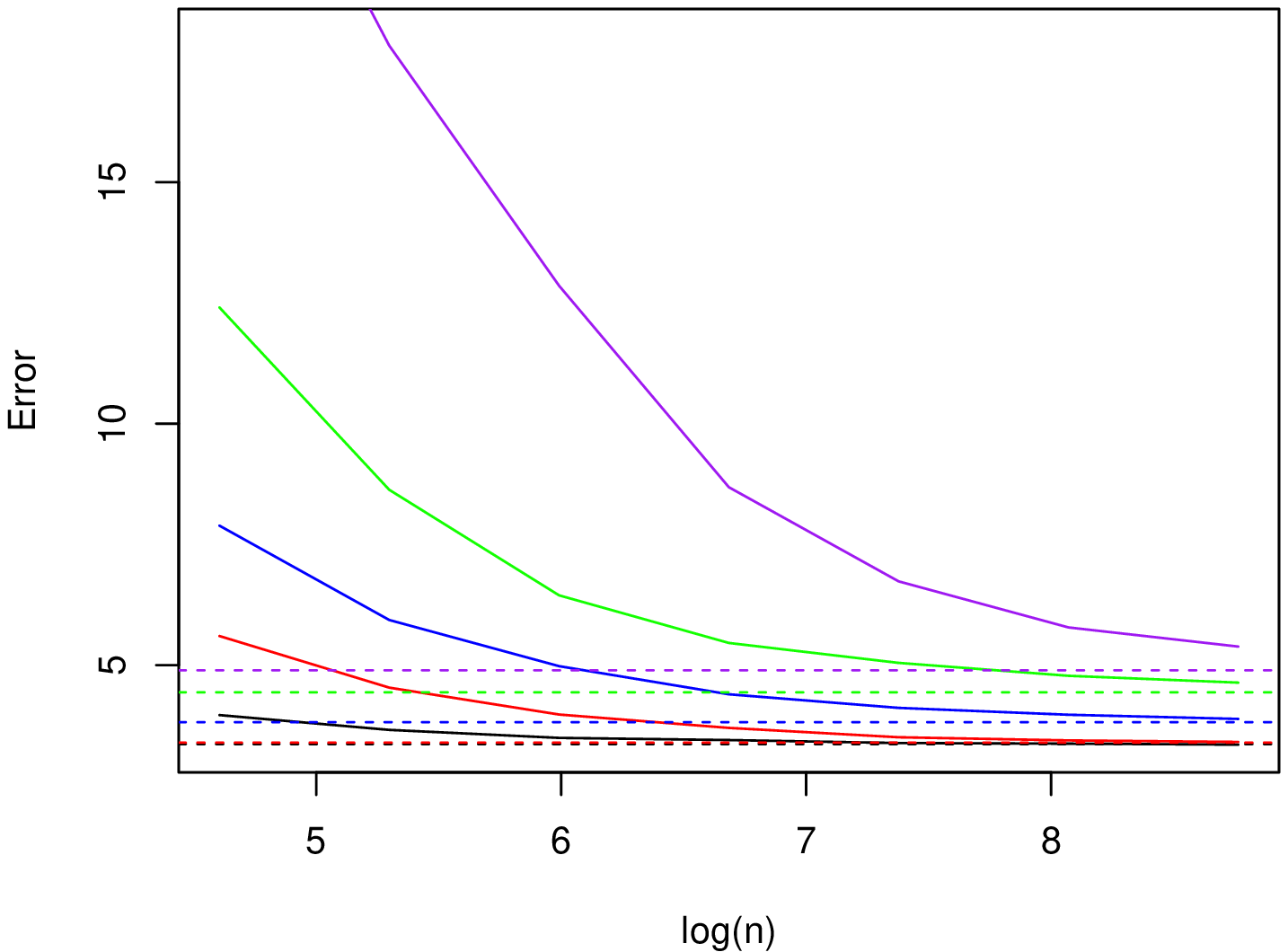}
\caption{{\small Risk estimates for the LDA classifier. Model~1, $d=5$, $\pi_1 =$ 0.9, Bayes risk = 3.37\%.  We present the estimated error without label noise (black) and with homogeneous label noise at rate $\rho =$ 0.1 (red), 0.2 (blue), 0.3 (green) and 0.4 (purple).  The dotted lines show the corresponding asymptotic limit as given by Theorem~\ref{thm:LDAhomo}.}}
\label{fig:errorplotLDA}
\end{figure}

Our first goal is to illustrate numerically our consistency and inconsistency results for the $k$nn, SVM and LDA classifiers.  In Figure~\ref{fig:errorplot1a} we present estimates of the risk for the three classifiers with different levels of homogeneous label noise.   We see that for Model~1 when the class prior probabilities are equal, all three classifiers perform well and in particular appear to be consistent, even when as many as 30\% of the training data labels are incorrect on average.   For the $k$nn and SVM classifiers we observe very similar results for Model~2; the LDA classifier does not perform well in this setting, however, since the Bayes decision boundary is non-linear.  These conclusions are in accordance with Corollary~\ref{cor:consistent} and Theorem~\ref{thm:LDAhomo}.

We further investigate the effect of homogeneous label noise on the performance of the LDA classifier for data from Model~1, but now when $d=5$ and the class prior probabilities are unbalanced.   Recall that in Theorem~\ref{thm:LDAhomo} we derived the asymptotic limit of the risk in terms of the Mahalanobis distance between the true class distributions, the class prior probabilities and the noise rate.   In Figure~\ref{fig:errorplotLDA}, we present the estimated risks of the LDA classifier for data from Model~1 with $\pi_1 =$ 0.9 for different homogeneous noise rates alongside the limit as specified by Theorem~\ref{thm:LDAhomo}.  This articulates the inconsistency of the corrupted LDA classifier, as observed in Theorem~\ref{thm:LDAhomo}.

Finally, we study empirically the asymptotic regret ratios for the $k$nn and SVM classifiers. We focus on the noise model in Example 2 in Section~\ref{sec:asymptotic}, where the label errors occur at random as follows: fix $g_0 \in (0,1/2)$, $h_0 > 2 - 1/g_0$, we let $g(1/2 + t) = \max[0, \min\{g_0(1 + h_0 t), 2g_{0}\}]$, then set $\rho_{0}(x) = g(\eta(x))$ and $\rho_{1}(x) = g(1-\eta(x))$.   In particular, we use the following settings: (i) $g_{0} = $ 0.1, $h_{0} = 0$; (ii) $g_{0} = $ 0.1, $h_{0} = -1$; (iii) $g_{0} = $ 0.1, $h_{0} = 1$; (iv) $g_{0} = $ 0.1, $h_{0} = 2$; (v) $g_{0} = $ 0.1, $h_{0} = 3$.  Noise setting (i), where $h_0 = 0$, corresponds to $g_0$-homogeneous noise.   

\begin{figure}[ht!]
	\centering
\includegraphics[width=0.9\textwidth]{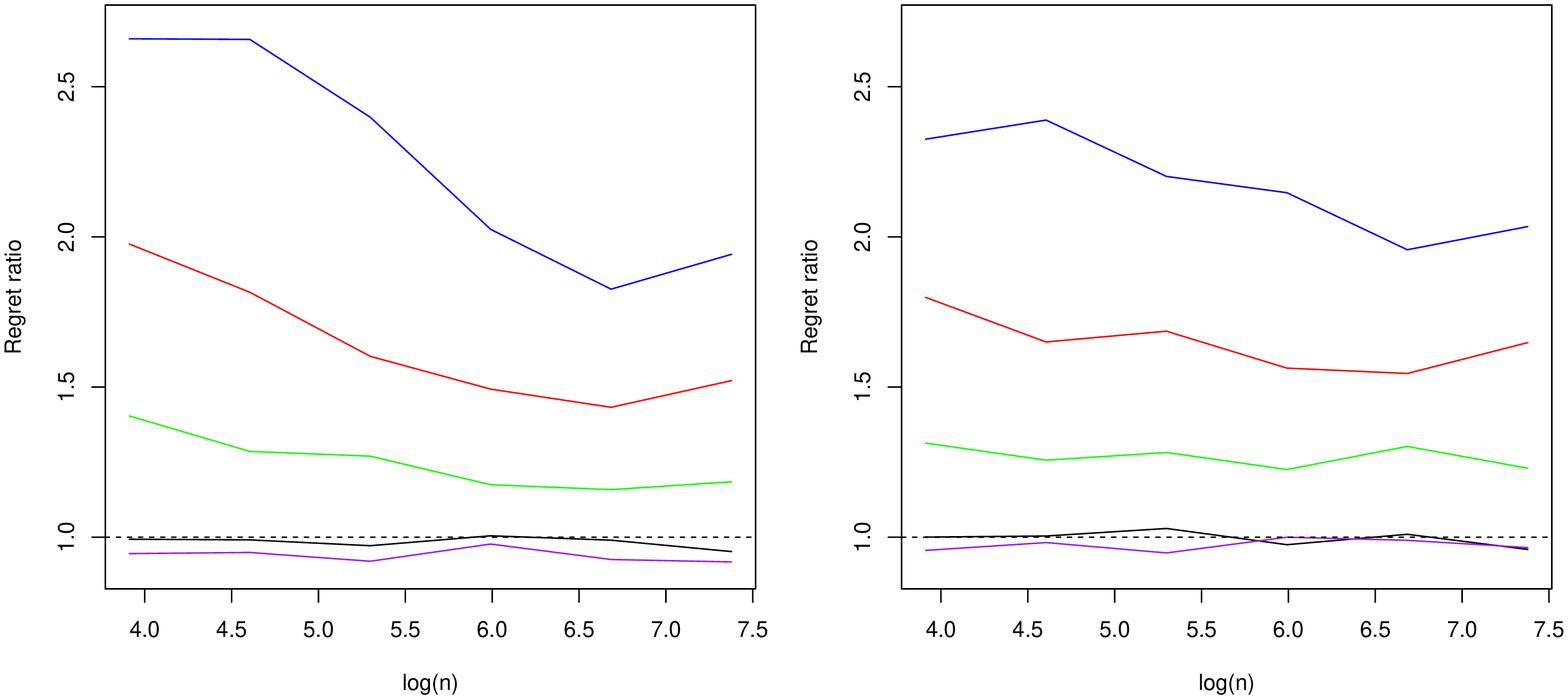}
\includegraphics[width=0.9\textwidth]{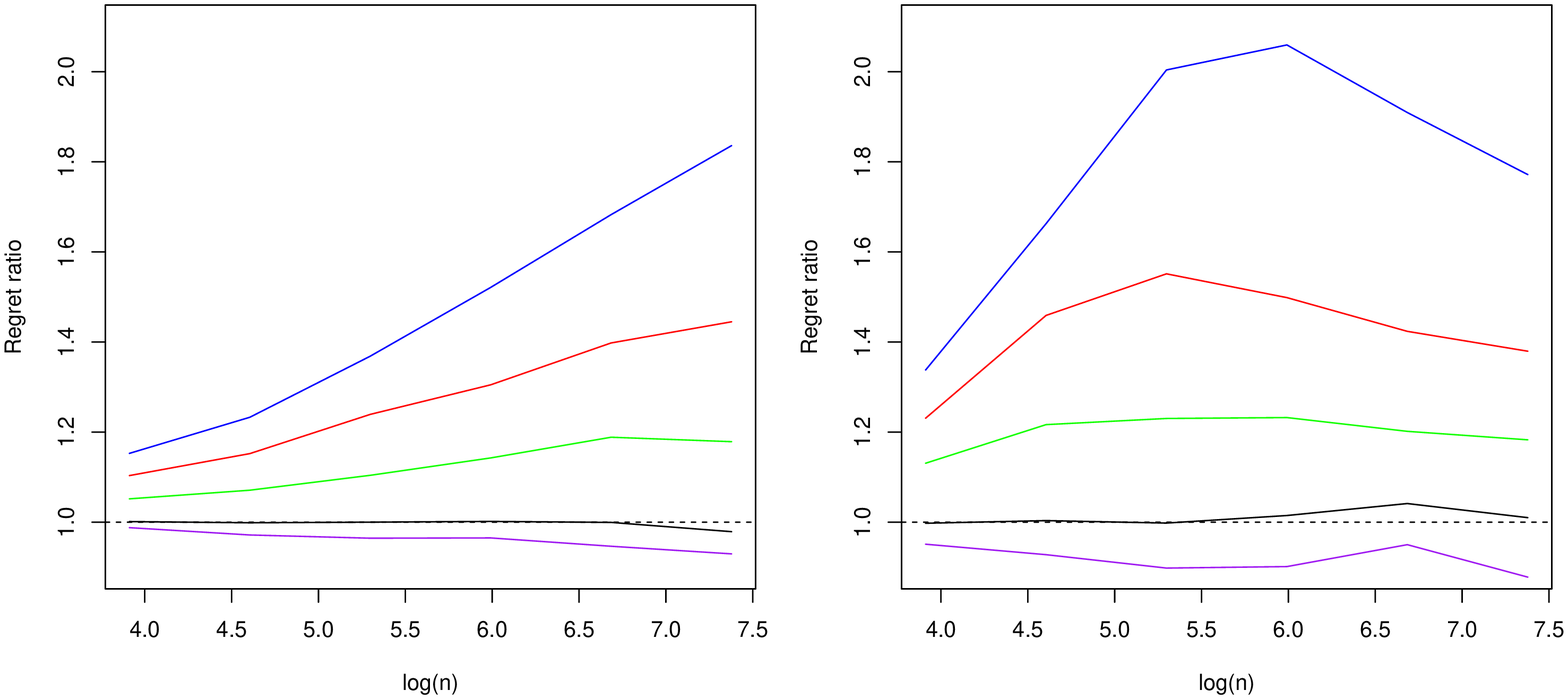}
\caption{{\small Estimated regret ratios for the $k$nn (left) and SVM (right) classifiers. Top: Model~1, with $d = 5$ and $\pi_{1} = $ 0.5. Bottom: Model~2, with $d = 5$.  We present the results with label noise of type (i -- red), (ii -- blue), (iii -- green), (iv -- black), and  (v -- purple).}} 
\label{fig:errorplot4a}
\end{figure}

For the $k$nn classifier, where $k$ is chosen to satisfy the conditions of Corollary~\ref{cor:knnRR}, our theory says that when $d=5$ in Models~1 and~2, the asymptotic regret ratios in the five noise settings are 1.22, 1.37, 1.10, 1 and 0.92 respectively.  We see from the left-hand plots of Figure~\ref{fig:errorplot4a} that, for $k$ chosen separately in the corrupted and uncorrupted cases via cross-validation, the empirical results provide good agreement with our theory, especially in the last three settings.  Reasons for the slight discrepancies between our asymptotic theory and empirically observed regret ratios in the first two noise settings include the following facts: the choices of $k$ in the noisy and noiseless label settings do not necessarily satisfy~\eqref{Eq:kg} exactly; the asymptotics in $n$ may not have fully `kicked in'; and Monte Carlo error (when $n$ is large, we are computing the ratio of two small quantities, so the standard error tends to be larger).  The performance of the SVM classifier is similar to that of the $k$nn classifier for both models.  

Finally, we discuss tuning parameter selection.  We have seen that for the $k$nn classifier the the choice of $k$ is important for achieving the optimal bias--variance trade-off; see also \citet{Hall:2008}.  Similarly, we need to choose an appropriate value of $\lambda$ for the SVM classifier; in practice, this is typically done via cross-validation.  When the classifier $\tilde{C}$ is trained with $\rho$-homogeneous noisy labels, we would like to select a tuning parameter to minimize $R(\tilde{C})$, but since the training data is corrupted, a tuning parameter selection method will target the minimizer of $\tilde{R}(\tilde{C})$.  However, by Lemma~\ref{lem:tilde}, we have that $R(\tilde{C}) = \{\tilde{R}(\tilde{C}) - \rho\}/(1- 2\rho)$, and it follows that our tuning parameter selection method requires no modification when trained with noisy labels.  In the heterogeneous noise case, however, we do not have this direct relationship; see \citet{Inouye:2017} for more on this topic.

In our simulations, we chose $k$ for the $k$nn classifier and $\lambda$ for the SVM classifier via leave-one-out and 10-fold cross-validation respectively, where the cross-validation was performed over the noisy training dataset. Moreover, for the SVM classifier, we used the default choice $\sigma^2= 1/d$ for the hyper-parameter for the kernel function.

\appendix

\section{Proofs and an illustrative example} 
\label{sec:proofs}

\subsection{Proofs from Section~\ref{sec:finite}}

\begin{proof}[Proof of Theorem~\ref{thm:hetnoise}]
	(i) First, we have that for $P_X$-almost all $x \in \mathcal{B}$, 
	\begin{align}
	\label{eq:etaetatilde}
	\tilde{\eta}(x) - 1/2 & =  \{\eta(x) - 1/2\} \{1 - \rho_0(x) - \rho_1(x)\} + \frac{1}{2} \{\rho_0(x) - \rho_1(x)\} \nonumber
	\\& = \{\eta(x) - 1/2\} \{1 - \rho_0(x) - \rho_1(x)\}\biggl( 1 -   \frac{\rho_1(x) - \rho_0(x)}{\{2\eta(x)-1\}\{1 - \rho_0(x) - \rho_1(x)\}} \biggr). 
	\end{align} 
	Thus, for $P_X$-almost all $x \in \mathcal{B}$,  we have $\{\rho_1(x) - \rho_0(x)\}/[\{2\eta(x)-1\}\{1 - \rho_0(x) - \rho_1(x)\}] < 1$ if and only if
	\[
	\mathrm{sgn}\{\tilde{\eta}(x) - 1/2\} = \mathrm{sgn}\{\eta(x) - 1/2\}.
	\]
	This completes the proof of~\eqref{eq:noisecond}. It follows that, if $P_X(\mathcal{A}^c \cap \mathcal{S}^c)=0$, then $P_X(\{x \in \mathcal{B} : \tilde{C}^{\mathrm{Bayes}}(x)= C^{\mathrm{Bayes}}(x)\}^c \cap \mathcal{S}^c)=0$.  In other words $P_X(\{x \in \mathcal{S}^c: \tilde{C}^{\mathrm{Bayes}}(x)\neq C^{\mathrm{Bayes}}(x) \})=0$, i.e.~(2) holds.  Here we have used the fact that $\mathcal{A} \subseteq \mathcal{B}$, so if $P_X(\mathcal{A}^c \cap \mathcal{S}^c)=0$, then $P_X(\mathcal{B}^c \cap \mathcal{S}^c)=0$.
	
	(ii) For the proof of this part, we apply Proposition~\ref{prop:prelim}.  First, since~\eqref{eq:symmetric} holds, we have $\tilde{R}(C^{\mathrm{Bayes}}) = \tilde{R}(\tilde{C}^{\mathrm{Bayes}})$.   From~\eqref{eq:etaetatilde}, we have that for $P_X$-almost all $x \in \mathcal{B}$,
	\begin{align}
	\label{Eq:Akappa}
	|2\tilde{\eta}(x) - 1| &= |2\eta(x) - 1| \{1 - \rho_0(x) - \rho_1(x)\}\Bigl( 1 -   \frac{\rho_1(x) - \rho_0(x)}{\{2\eta(x)-1\}\{1 - \rho_0(x) - \rho_1(x)\}} \Bigr) \nonumber
	\\ & \geq  |2\eta(x) - 1| (1 - 2\rho^*) ( 1 - a^*).
	\end{align}
	In fact, the conclusion of~\eqref{Eq:Akappa} remains true (trivially) when $x \in \mathcal{S}$.  Thus, by Proposition~\ref{prop:prelim}, 
	\begin{align*}
	R(C) - R(C^{\mathrm{Bayes}}) 
	&\leq \inf_{\kappa > 0} \Bigl\{\kappa \{\tilde{R}(C) - \tilde{R}(\tilde{C}^{\mathrm{Bayes}})\} + P_X(A_{\kappa}^c) \Bigr\}
	\\ & \leq \frac{\tilde{R}(C) - \tilde{R}(\tilde{C}^{\mathrm{Bayes}})}{  (1 - 2\rho^*) ( 1 - a^*)}  + P_X(A_{(1 - 2\rho^*)^{-1} ( 1 - a^*)^{-1}}^c) 
	=  \frac{\tilde{R}(C) - \tilde{R}(\tilde{C}^{\mathrm{Bayes}})}{  (1 - 2\rho^*) ( 1 - a^*)},
	\end{align*} 
	since $P_X(A_{(1 - 2\rho^*)^{-1} ( 1 - a^*)^{-1}}^c) \leq P_X(A_{(1 - 2\rho^*)^{-1} ( 1 - a^*)^{-1}}^c \cap \mathcal{B}) + P_X(\mathcal{B}^c) = 0$, by~\eqref{Eq:Akappa}.
\end{proof} 

\bigskip 

Proposition~\ref{prop:prelim} is a special case of the following result.
\begin{proposition}
	\label{prop:prelim2}
	Let $\mathcal{D} = \bigl\{x \in \mathcal{S}^c : \tilde{C}^{\mathrm{Bayes}}(x) = C^{\mathrm{Bayes}}(x)\bigr\}$, and recall the definition of $A_\kappa$ in Proposition~\ref{prop:prelim}.  Then, for any classifier $C$, 
	\begin{align*}
	R(C) - R(C^{\mathrm{Bayes}})  &\leq   R(\tilde{C}^{\mathrm{Bayes}}) - R(C^{\mathrm{Bayes}}) + \min\Bigl[\mathbb{P}\bigl\{ \{C(X) \neq  \tilde{C}^{\mathrm{Bayes}}(X)\} \cap \{X \in \mathcal{D}\} \bigr\}, 
	\\ & \hspace{80pt}   \inf_{\kappa > 0} \bigl\{\kappa \{\tilde{R}(C) - \tilde{R}(\tilde{C}^{\mathrm{Bayes}})\} + \mathbb{E}\bigl(|2\eta(X) - 1| \mathbbm{1}_{\{X \in \mathcal{D} \setminus A_{\kappa}\}}\bigr) \bigr\} \Bigr].
	\end{align*} 
\end{proposition}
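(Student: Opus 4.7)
The plan is to start from the standard excess risk identity
\[
R(C)-R(C^{\mathrm{Bayes}}) = \int_{\mathcal{X}} |2\eta(x)-1| \, \mathbbm{1}_{\{C(x)\neq C^{\mathrm{Bayes}}(x)\}} \, dP_X(x)
\]
and its analogue for the corrupted problem
\[
\tilde{R}(C)-\tilde{R}(\tilde{C}^{\mathrm{Bayes}}) = \int_{\mathcal{X}} |2\tilde\eta(x)-1| \, \mathbbm{1}_{\{C(x)\neq \tilde{C}^{\mathrm{Bayes}}(x)\}} \, dP_X(x),
\]
both of which are immediate from conditioning on $X$. The strategy is then to split the integral over $\mathcal{X}$ according to whether $x\in\mathcal{D}$, $x\in\mathcal{D}^c\cap\mathcal{S}$, or $x\in\mathcal{D}^c\cap\mathcal{S}^c$, and bound each piece separately.

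First I would observe that on $\mathcal{D}^c\cap\mathcal{S}$ the integrand vanishes because $|2\eta(x)-1|=0$, while on $\mathcal{D}^c\cap\mathcal{S}^c$ we simply bound $\mathbbm{1}_{\{C(x)\neq C^{\mathrm{Bayes}}(x)\}}\leq 1$ and recognize the resulting integral as exactly $R(\tilde{C}^{\mathrm{Bayes}})-R(C^{\mathrm{Bayes}})$, since $\mathcal{D}^c\cap\mathcal{S}^c=\{x\in\mathcal{S}^c:\tilde{C}^{\mathrm{Bayes}}(x)\neq C^{\mathrm{Bayes}}(x)\}$ and $|2\eta-1|\mathbbm{1}_{\mathcal{D}^c\cap\mathcal{S}^c}$ is precisely the excess-risk integrand for comparing $\tilde{C}^{\mathrm{Bayes}}$ and $C^{\mathrm{Bayes}}$. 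This produces the additive $R(\tilde{C}^{\mathrm{Bayes}})-R(C^{\mathrm{Bayes}})$ term appearing in the statement.

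Next, on the set $\mathcal{D}$ we have $C^{\mathrm{Bayes}}(x)=\tilde{C}^{\mathrm{Bayes}}(x)$, so the indicator $\mathbbm{1}_{\{C(x)\neq C^{\mathrm{Bayes}}(x)\}}$ equals $\mathbbm{1}_{\{C(x)\neq\tilde{C}^{\mathrm{Bayes}}(x)\}}$. I would then bound
\[
\int_{\mathcal{D}} |2\eta(x)-1|\,\mathbbm{1}_{\{C(x)\neq\tilde{C}^{\mathrm{Bayes}}(x)\}}\,dP_X(x)
\]
in two ways. The trivial bound $|2\eta(x)-1|\leq 1$ yields the first term $\mathbb{P}\bigl(\{C(X)\neq\tilde{C}^{\mathrm{Bayes}}(X)\}\cap\{X\in\mathcal{D}\}\bigr)$ of the minimum. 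For the second term, I would split $\mathcal{D}$ into $\mathcal{D}\cap A_\kappa$ and $\mathcal{D}\setminus A_\kappa$: on the former the defining inequality of $A_\kappa$ gives $|2\eta(x)-1|\leq \kappa|2\tilde\eta(x)-1|$, so that portion is bounded by $\kappa\{\tilde{R}(C)-\tilde{R}(\tilde{C}^{\mathrm{Bayes}})\}$ after extending the integral back to $\mathcal{X}$; on the latter we drop the indicator against $C$ to obtain $\mathbb{E}\bigl(|2\eta(X)-1|\mathbbm{1}_{\{X\in\mathcal{D}\setminus A_\kappa\}}\bigr)$. Taking the infimum over $\kappa>0$ yields the second alternative in the minimum.

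None of the steps involves a serious technical obstacle; the only delicate point is correctly identifying that the $\mathcal{D}^c\cap\mathcal{S}^c$ contribution is precisely the excess risk of $\tilde{C}^{\mathrm{Bayes}}$ over $C^{\mathrm{Bayes}}$, and then carefully using the equality $C^{\mathrm{Bayes}}=\tilde{C}^{\mathrm{Bayes}}$ on $\mathcal{D}$ to swap out the indicator before applying the $A_\kappa$ inequality. Proposition~\ref{prop:prelim} follows as the special case where \eqref{eq:symmetric} holds, since then $R(\tilde{C}^{\mathrm{Bayes}})-R(C^{\mathrm{Bayes}})=0$, $P_X(\mathcal{D}^c)=P_X(\mathcal{S})$, and the remaining set-intersections with $\mathcal{D}$ can be dropped (the integrand vanishes on $\mathcal{S}\subseteq\mathcal{D}^c$).
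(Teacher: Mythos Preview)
Your proposal is correct and follows essentially the same route as the paper. The only organizational difference is that the paper writes the key decomposition as $R(C)-R(\tilde{C}^{\mathrm{Bayes}})=\int_{\mathcal{X}}\bigl[\mathbb{P}\{C(x)=0\}-\mathbbm{1}_{\{\tilde\eta(x)<1/2\}}\bigr]\{2\eta(x)-1\}\,dP_X(x)$ and observes that the integrand is nonpositive on $\mathcal{D}^c$ (so that piece can be dropped), whereas you start from $R(C)-R(C^{\mathrm{Bayes}})$ and recognize the $\mathcal{D}^c\cap\mathcal{S}^c$ contribution as exactly $R(\tilde{C}^{\mathrm{Bayes}})-R(C^{\mathrm{Bayes}})$; these are equivalent rearrangements of the same identity. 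One small point: the paper allows $C$ to be random (depending on training data) and therefore writes $\mathbb{P}\{C(x)=0\}$ rather than an indicator, but your argument goes through unchanged once the indicators are replaced by these probabilities.
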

Remark: If \eqref{eq:symmetric} holds, i.e.~$P_X(\mathcal{D}^c \cap \mathcal{S}^c) = 0$, then $R(\tilde{C}^{\mathrm{Bayes}}) = R(C^{\mathrm{Bayes}})$, and moreover we have that $\mathbb{E}\bigl(|2\eta(X) - 1| \mathbbm{1}_{\{X \in \mathcal{D} \setminus A_{\kappa}\}}\bigr) \leq P_X\bigl(\mathcal{D} \setminus A_{\kappa}\bigr) \leq P_X\bigl(A_{\kappa}^c\bigr)$.
\begin{proof}[Proof of Proposition~\ref{prop:prelim2}]
	First write
	\begin{align}
	\label{eq:risk}
	R(C) &=  \int_{\mathcal{X}} \mathbb{P}\{C(x) \neq Y \mid X=x\} \, dP_{X}(x) \nonumber
	\\& = \int_{\mathcal{X}} \bigl[ \mathbb{P}\{C(x) = 0 \}  \mathbb{P}(Y =1 \mid X = x) +  \mathbb{P}\{C(x) = 1 \}\mathbb{P}(Y =0\mid X = x) \bigr]\, dP_{X}(x) \nonumber
	\\& = \int_{\mathcal{X}} \bigl[ \mathbb{P}\{C(x) = 0 \}\{2\eta(x) - 1\}  + \{1-\eta(x)\} \bigr] \, dP_{X}(x).
	\end{align}
	Here we have implicitly assumed that the classifier $C$ is random since it may depend on random training data.  However, in the case that $C$ is non-random, one should interpret $\mathbb{P}\{C(x) = 0\}$ as being equal to $\mathbbm{1}_{\{C(x) = 0\} }$, for $x \in \mathcal{X}$. 
	
	Now, for $P_X$-almost all $x \in \mathcal{D}$, 
	\begin{align*}
	\bigl[\mathbb{P}\{C(x) = 0\} - \mathbbm{1}_{\{\tilde{\eta}(x) < 1/2\}}\bigr] \{2\eta(x) - 1\} & = \bigl|\mathbb{P}\{C(x) = 0\} - \mathbbm{1}_{\{\tilde{\eta}(x) < 1/2\}}\bigr| |2\eta(x) - 1|  
	\\ & \leq \bigl|\mathbb{P}\{C(x) = 0\} - \mathbbm{1}_{\{\tilde{\eta}(x) < 1/2\}}\bigr| 
	\\ & = \mathbb{P}\{C(x) \neq \tilde{C}^{\mathrm{Bayes}}(x)\}.
	\end{align*} 
	Moreover, for $P_X$-almost all $x \in \mathcal{D}^c$, we have
	\begin{equation}
	\label{Eq:NegativeProduct}
	\bigl[\mathbb{P}\{C(x) = 0\} - \mathbbm{1}_{\{\tilde{\eta}(x) < 1/2\}}\bigr] \{2\eta(x) - 1\} \leq 0
	\end{equation}
	It follows that 
	\begin{align*}
	R(C) - R(\tilde{C}^{\mathrm{Bayes}}) & = \int_{\mathcal{X}} \bigl[\mathbb{P}\{C(x) = 0\} - \mathbbm{1}_{\{\tilde{\eta}(x) < 1/2\}}\bigr] \{2\eta(x) - 1\} \, dP_{X}(x)
	\\ & = \int_{\mathcal{D}} \bigl[\mathbb{P}\{C(x) = 0\} - \mathbbm{1}_{\{\tilde{\eta}(x) < 1/2\}}\bigr] \{2\eta(x) - 1\} \, dP_{X}(x) 
	\\ & \hspace {30 pt} +  \int_{\mathcal{D}^c} \bigl[\mathbb{P}\{C(x) = 0\} - \mathbbm{1}_{\{\tilde{\eta}(x) < 1/2\}}\bigr] \{2\eta(x) - 1\} \, dP_{X}(x)
	\\ & \leq \mathbb{P}\bigl(\{C(X) \neq \tilde{C}^{\mathrm{Bayes}}(X)\} \cap \{X \in \mathcal{D}\} \bigr). 
	\end{align*}
	
	To see the right-hand bound, observe that by~\eqref{Eq:NegativeProduct}, for $\kappa > 0$, 
	\begin{align*} 
	R(C) - R(\tilde{C}^{\mathrm{Bayes}}) & = \int_{\mathcal{X}} \bigl[\mathbb{P}\{C(x) = 0\} - \mathbbm{1}_{\{\tilde{\eta}(x) < 1/2\}}\bigr] \{2\eta(x) - 1\} \, dP_{X}(x) 
	\\ & \leq \int_{\mathcal{D}} \bigl[\mathbb{P}\{C(x) = 0\} - \mathbbm{1}_{\{\tilde{\eta}(x) < 1/2\}}\bigr] \{2\eta(x) - 1\} \, dP_{X}(x)  
	\\ & \leq \kappa \int_{\mathcal{D} \cap A_\kappa} \bigl[\mathbb{P}\{C(x) = 0\} - \mathbbm{1}_{\{\tilde{\eta}(x) < 1/2\}}\bigr] \{2\tilde{\eta}(x) - 1\} \, dP_{X}(x) 
	\\ & \hspace{180 pt} + \mathbb{E}\bigl(|2\eta(X) - 1| \mathbbm{1}_{\{X \in \mathcal{D} \setminus A_{\kappa}\}}\bigr) 
	\\ & = \kappa \{\tilde{R}(C) - \tilde{R}(\tilde{C}^{\mathrm{Bayes}})\} + \mathbb{E}\bigl(|2\eta(X) - 1| \mathbbm{1}_{\{X \in \mathcal{D} \setminus A_{\kappa}\}}\bigr),
	\end{align*} 
	where the last step follows from~\eqref{eq:risk}.
\end{proof}

\bigskip 
\begin{example} 
	\label{ex:1nn}
\emph{Suppose that $\mathcal{X} \subseteq \mathbb{R}^d$ and that the noise is $\rho$-homogeneous with $\rho \in (0,1/2)$.  Consider the corrupted $1$-nearest neighbour classifier $\tilde{C}^{1\mathrm{nn}}(x) = \tilde{Y}_{(1)}$, where $(X_{(1)},\tilde{Y}_{(1)}) = (X_{(1)}(x),\tilde{Y}_{(1)}(x)) = (X_{i^*},\tilde{Y}_{i^*})$ is the training data pair for which $i^* = \sargmin_{i =1,\ldots,n} \|X_i-x\|$, where $\sargmin$ denotes the smallest index of the set of minimizers.  We first study the first term in the minimum in \eqref{eq:minbound}.  Noting that $\tilde{R}(\tilde{C}^{\mathrm{Bayes}}) = \mathbb{E}[\min\{\tilde{\eta}(X), 1-\tilde{\eta}(X) \}]$, we have }
	\begin{align}
	\label{Eq:LongDisplay}
	\bigl|\mathbb{P}\{\tilde{C}^{1\mathrm{nn}}&(X) \neq \tilde{C}^{\mathrm{Bayes}}(X)\} - \tilde{R}(\tilde{C}^{\mathrm{Bayes}}) \bigr| \nonumber
	\\ & = \bigl|\mathbb{P}\{\tilde{Y}_{(1)}(X)  \neq \tilde{C}^{\mathrm{Bayes}}(X)\} - \tilde{R}(\tilde{C}^{\mathrm{Bayes}})\bigr| \nonumber
	\\ & = \bigl| \mathbb{E}[\mathbbm{1}_{\{\tilde{\eta}(X) < 1/2\}}\tilde{\eta}(X_{(1)}(X)) + \mathbbm{1}_{\{\tilde{\eta}(X) \geq 1/2\}}\{1- \tilde{\eta}(X_{(1)}(X))\} ]  - \tilde{R}(\tilde{C}^{\mathrm{Bayes}})\bigr| \nonumber
	\\ & = \bigl|\mathbb{E}[\mathbbm{1}_{\{\tilde{\eta}(X) < 1/2\}}\{\tilde{\eta}(X_{(1)}(X)) - \tilde{\eta}(X) \} + \mathbbm{1}_{\{\tilde{\eta}(X) \geq 1/2\}}\{\tilde{\eta}(X)- \tilde{\eta}(X_{(1)}(X))\} ] \bigr| \nonumber
	\\& \leq  \mathbb{E}\bigl|\tilde{\eta}(X_{(1)}(X)) - \tilde{\eta}(X)\bigr | \rightarrow 0,
	\end{align}
\emph{	where the final limit follows by \citet[Lemma~5.4]{PTPR:1996}.}
	
\emph{	Now focusing on the second term in the minimum in~\eqref{eq:minbound}, by \citet[Theorem~5.1]{PTPR:1996}, we have}
	\[
	\tilde{R}(\tilde{C}^{1\mathrm{nn}}) - \tilde{R}(\tilde{C}^{\mathrm{Bayes}})  \rightarrow 2\mathbb{E}[\tilde{\eta}(X)\{1-\tilde{\eta}(X)\}] - \tilde{R}(\tilde{C}^{\mathrm{Bayes}}).
	\]
\emph{Moreover, in this case, $P_X(A_{\kappa}^c) = 1$ for all $\kappa \leq (1 - 2\rho)^{-1}$, and 0 otherwise.  Therefore, if $\rho$ is small enough that $\rho\tilde{R}(\tilde{C}^{\mathrm{Bayes}}) <  \tilde{R}(\tilde{C}^{\mathrm{Bayes}}) - \mathbb{E}[\tilde{\eta}(X)\{1-\tilde{\eta}(X)\}]$, then }
	\begin{align}
	\label{Eq:TwoMins}
	\lim_{n \rightarrow \infty} \inf_{\kappa > 0} \Bigl\{ \kappa \{\tilde{R}(\tilde{C}^{1\mathrm{nn}}) &- \tilde{R}(\tilde{C}^{\mathrm{Bayes}})\} + P_X(A_\kappa^c) \Bigr\} = \lim_{n\rightarrow \infty} \frac{\tilde{R}(\tilde{C}^{1\mathrm{nn}}) - \tilde{R}(\tilde{C}^{\mathrm{Bayes}})}{1 - 2\rho} \nonumber \\
	&= \frac{2\mathbb{E}[\tilde{\eta}(X)\{1-\tilde{\eta}(X)\}] - \tilde{R}(\tilde{C}^{\mathrm{Bayes}})}{1-2\rho} \nonumber \\
	&< \tilde{R}(\tilde{C}^{\mathrm{Bayes}}) = \lim_{n \rightarrow \infty} \mathbb{P}\{\tilde{C}^{1\mathrm{nn}}(X) \neq \tilde{C}^{\mathrm{Bayes}}(X)\},
	\end{align}
\emph{	where the final equality is due to~\eqref{Eq:LongDisplay}.  Thus, in this case, the second term in the minimum in~\eqref{eq:minbound} is smaller for sufficiently large $n$.  However, if $\rho\tilde{R}(\tilde{C}^{\mathrm{Bayes}}) > \tilde{R}(\tilde{C}^{\mathrm{Bayes}}) - \mathbb{E}[\tilde{\eta}(X)\{1-\tilde{\eta}(X)\}]$, the  asymptotically better bound is given by the first term in the minimum in the conclusion of Proposition~\ref{prop:prelim}, because then the inequality in~\eqref{Eq:TwoMins} is reversed.}
	\hfill $\Box$
\end{example} 
\bigskip 

\begin{proof}[Proof of Corollary~\ref{cor:consistent}]
	Let $\epsilon_n = \max\bigl[\sup_{m\geq n}\{\tilde{R}(\tilde{C}_m) - \tilde{R}(\tilde{C}^{\mathrm{Bayes}})\}^{1/2}, n^{-1}\bigr]$.  Then, by Proposition~\ref{prop:prelim2},
	\begin{align*}
	R(\tilde{C}_n) - R(\tilde{C}^{\mathrm{Bayes}}) &\leq \frac{1}{\epsilon_n} \{\tilde{R}(\tilde{C}_n) - \tilde{R}(\tilde{C}^{\mathrm{Bayes}})\} + \mathbb{E}\bigl(|2\eta(X) - 1| \mathbbm{1}_{\{X \in \mathcal{D} \setminus A_{\epsilon_n^{-1}}\}}\bigr)
	\\ & \leq  \{\tilde{R}(\tilde{C}_n) - \tilde{R}(\tilde{C}^{\mathrm{Bayes}})\}^{1/2} + P_X(\mathcal{D} \setminus A_{\epsilon_n^{-1}}\bigr).
	\end{align*}
	Since $(\epsilon_n)$ is decreasing, it follows that
	\[
	\limsup_{n \rightarrow \infty} R(\tilde{C}_n) - R(C^{\mathrm{Bayes}})  \leq R(\tilde{C}^{\mathrm{Bayes}}) - R(C^{\mathrm{Bayes}}) + P_X(\tilde{\mathcal{S}} \cap \mathcal{D}).
	\]
	In particular, if \eqref{eq:symmetric} holds, then   
	\[
	\limsup_{n \rightarrow \infty} R(\tilde{C}_n) - R(C^{\mathrm{Bayes}})  \leq P_X(\tilde{\mathcal{S}} \setminus \mathcal{S}),
	\]
	as required.
\end{proof}
\bigskip

\subsection{Conditions and proof of Theorem~\ref{thm:knnhet}}
\label{sec:knnproofs}
A formal description of the conditions of Theorem~\ref{thm:knnhet} is given below:
	\begin{description}
		\item[\normalfont{\emph{Assumption} A1.}] The probability measures $P_0$ and $P_1$ are absolutely continuous with respect to Lebesgue measure, with Radon--Nikodym derivatives $f_0$ and $f_1$, respectively. Moreover, the marginal density of $X$, given by $\bar{f} = \pi_{0} f_{0} + \pi_{1} f_{1}$, is continuous and positive.
\end{description}
	\begin{description}
		\item[\normalfont{\emph{Assumption} A2.}] The set $\mathcal{S}$ is non-empty and $\bar{f}$ is bounded on $\mathcal{S}$.  There exists $\epsilon_0 > 0$ such that $\bar{f}$ is twice continuously differentiable on $\mathcal{S}^{\epsilon_0} = \mathcal{S} + B_{\epsilon_0}(0)$, and
	\begin{equation}
	\label{eq:A2eq}
	F(\delta) = \sup_{x_0 \in \mathcal{S}: \bar{f}(x_0) \geq \delta} \max \biggl\{ \frac{\|\dot{\bar{f}}(x_0)\|}{\bar{f}(x_0)}, \frac{\sup_{u \in B_{\epsilon_0}(0)}\|\ddot{\bar{f}}(x_0+u)\|_{\mathrm{op}}}{ \bar{f}(x_0)} \biggr\} = o(\delta^{-\tau})
	\end{equation}
	as $\delta \searrow 0$, for every $\tau>0$.  Furthermore, recalling $a_d = \pi^{d/2}/\Gamma(1+d/2)$ and writing $p_\epsilon(x) = P_X(B_\epsilon(x))$, there exists $\mu_{0} \in (0,a_d)$ such that for all $x \in \mathbb{R}^{d}$ and $\epsilon \in (0,\epsilon_{0}]$, we have
	\[
	p_\epsilon(x) \geq \mu_{0} \epsilon^{d} \bar{f}(x).
	\]
\end{description}
	\begin{description}
		\item[\normalfont{\emph{Assumption} A3.}] We have $\inf_{x_0\in \mathcal{S}}\|\dot{\eta}(x_0)\| > 0$, so that $\mathcal{S}$ is a $(d-1)$-dimensional, orientable manifold.  Moreover, $\sup_{x \in \mathcal{S}^{2\epsilon_0}} \|\dot{\eta}(x)\| < \infty$ and $\ddot{\eta}$ is uniformly continuous on $\mathcal{S}^{2\epsilon_0}$ with $\sup_{x\in \mathcal{S}^{2\epsilon_0}} \|\ddot{\eta} (x)\|_{\mathrm{op}} < \infty$.  Finally, the function $\eta$ is continuous, and 
	\[
	\inf_{x \in \mathbb{R}^d \setminus \mathcal{S}^{\epsilon_0}} |\eta(x) - 1/2| > 0.
	\]
\end{description} 
	\begin{description}
		\item[\normalfont{\emph{Assumption} A4($\alpha$).}]
	We have that $\int_{\mathbb{R}^d}  \|x\|^{\alpha} \, dP_{X}(x) < \infty$ and $\int_{\mathcal{S}}  \bar{f}(x_0)^{d/(\alpha+d)} \, d\mathrm{Vol}^{d-1}(x_0) < \infty$, where $d\mathrm{Vol}^{d-1}$ denotes the $(d-1)$-dimensional volume form on $\mathcal{S}$. 
\end{description}

\begin{proof}[Proof of Theorem~\ref{thm:knnhet}]
	\textit{Part 1:} We show that the distribution $\tilde{P}$ of the pair $(X, \tilde{Y})$ satisfies suitably modified versions of Assumptions A1, A2, A3 and A4($\alpha$).
	
	Assumption A1: For $r \in \{0,1\}$, let $\tilde{P}_r$ denote the conditional distribution of $X$ given $\tilde{Y} = r$.  For $x \in \mathbb{R}^{d}$, and $r = 0,1$, define
	\[
	\tilde{f}_{r}(x)  =  \frac{\pi_{r}\{1-\rho_r(x)\}f_{r}(x) + \pi_{1-r}\rho_{1-r}(x) f_{1-r}(x)} {\int_{\mathbb{R}^{d}} \pi_{r}\{1-\rho_r(z)\}f_{1-r}(z) + \pi_{1-r}\rho_{1-r}(z)f_{1-r}(z) \, dz}.
	\]
	Now, for a Borel subset $A$ of $\mathbb{R}^{d}$, we have that
	\begin{align*} 
	\tilde{P}_{1}(A)  & = \mathbb{P}(X \in A \mid \tilde{Y} = 1) = \frac{\mathbb{P}(X \in A ,\tilde{Y} = 1)}{ \mathbb{P} ( \tilde{Y} =1) } 
	\\& =  \frac{\pi_{1}\mathbb{P}(X \in A ,\tilde{Y} = 1 \mid Y = 1) + \pi_{0}\mathbb{P}(X \in A ,\tilde{Y} = 1 \mid Y = 1)}{ \mathbb{P} ( \tilde{Y} =1) } 
	\\& = \frac{\pi_{1}\int_{A}  \{1-\rho_1(x)\}  f_{1}(x)  \, dx + \pi_{0} \int_{A}  \rho_0(x)  f_{0}(x)  \, dx } { \mathbb{P} ( \tilde{Y} =1) }  = \int_{A} \tilde{f}_{1}(x) \, dx. 
	\end{align*} 
	Similarly, $\tilde{P}_{0}(A) = \int_{A} \tilde{f}_{0}(x) \, dx$.  Hence $\tilde{P}_{0}$ and $\tilde{P}_{1}$ are absolutely continuous with respect to Lebesgue measure, with Radon--Nikodym derivatives $\tilde{f}_{0}$ and $\tilde{f}_{1}$, respectively.  Furthermore, $\tilde{f} = \mathbb{P}(\tilde{Y}=0) \tilde{f}_{0} +  \mathbb{P}(\tilde{Y}=1)\tilde{f}_{1} = \bar{f}$ is continuous and positive. 
	
	Assumption A2: Since A2 refers mainly to the marginal distribution of $X$, which is unchanged under the addition of label noise, this assumption is trivially satisfied for $\tilde{f} = \bar{f}$, as long as $\tilde{\mathcal{S}} = \{ x\in \mathbb{R}^{d} : \tilde{\eta}(x) = 1/2\} = \mathcal{S}$.  To see this, let $\delta_0 > 0$ and note that for $x$ satisfying $\eta(x) -1/2 > \delta_0$, we have from~\eqref{eq:tildeeta} that
	\begin{align}
	\label{eq:etabound}
	\tilde{\eta}(x) - 1/2 
	& =  \{\eta(x) - 1/2\}\{1 - \rho_0(x) - \rho_1(x)\}\Bigl\{1  + \frac{\rho_{0}(x) - \rho_{1}(x)}{\{2\eta(x)-1\}\{1 - \rho_0(x) - \rho_1(x)\}}\Bigr\}\nonumber
	\\& >  \{\eta(x) - 1/2\}(1- 2\rho^*)(1 - a^*) \geq   \delta_0 (1- 2\rho^*)(1 - a^*). 
	\end{align} 
	Similarly, if $1/2 - \eta(x) > \delta_0$, then we have that $1/2 - \tilde{\eta}(x) >  \delta_0 (1- 2\rho^*)(1 - a^*)$.  It follows that $\tilde{\mathcal{S}} \subseteq \mathcal{S}.$  Now, for $x$ such that $|\eta(x) -1/2| < \delta$, we have 
	\begin{equation}
	\label{Eq:geq}
	\tilde{\eta}(x) - 1/2 = \eta(x) - 1/2 + \{1-\eta(x)\}g(\eta(x)) - \eta(x)g(1-\eta(x)). 
	\end{equation}
	Thus $\mathcal{S} \subseteq \tilde{\mathcal{S}}$. 
	
	Assumption A3: Since $g$ is twice continuously differentiable, we have that $\tilde{\eta}$ is twice continuously differentiable on the set $\{x \in \mathcal{S}^{2\epsilon_0} : |\eta(x) -1/2| < \delta \}$.  On this set, its gradient vector at $x$ is
	\begin{align*} 
	\dot{\tilde{\eta}}(x) 
	& = \dot{\eta}(x)\Bigl[1 - g(\eta(x)) - g(1 - \eta(x)) + \{1-\eta(x)\} \dot{g}(\eta(x)) + \eta(x) \dot{g}(1 - \eta(x)) \Bigr].
	\end{align*} 
	The corresponding Hessian matrix at $x$ is 
	\begin{align*}
	\ddot{\tilde{\eta}}(x) & = \ddot{\eta}(x)\Bigl[1 - g(\eta(x)) - g(1 - \eta(x)) + \{1-\eta(x)\} \dot{g}(\eta(x)) + \eta(x) \dot{g}(1 - \eta(x))\Bigr ] 
	\\ & \hspace{30 pt} - \dot{\eta}(x) \Bigl[\dot{\eta}(x)^T \dot{g}(\eta(x)) - \dot{\eta}(x)^T \dot{g}(1 - \eta(x)) + \dot{\eta}(x)^T\dot{g}(\eta(x)) 
	\\ & \hspace{60 pt} - \{1-\eta(x)\} \dot{\eta}(x)^T \ddot{g}(\eta(x)) - \dot{\eta}(x)^T\dot{g}(1-\eta(x)) + \eta(x) \dot{\eta}(x)^T \ddot{g}(1 - \eta(x))\Bigr ].
	\end{align*} 
	In particular, for $x_0 \in \mathcal{S}$ we have 
	\begin{equation}
	\label{Eq:etadotddot}
	\dot{\tilde{\eta}}(x_0) = \dot{\eta}(x_0) \{1 - 2g(1/2) + \dot{g}(1/2)\}; \quad \ddot{\tilde{\eta}}(x_0) = \ddot{\eta}(x_0)\{1 - 2g(1/2) + \dot{g}(1/2)\}.
	\end{equation}
	Now define 
	\[
	\epsilon_1 = \sup\Bigl\{ \epsilon > 0 : \sup_{x \in \mathcal{S}^{2\epsilon}} |\eta(x) - 1/2| < \delta\Bigr\} > 0,
	\]
	where the fact that $\epsilon_1$ is positive follows from Assumption A3.  Set $\tilde{\epsilon}_0 = \min\{\epsilon_0, \epsilon_1\}/2$. Then, using the properties of $g$, we have that $\inf_{x_0\in \mathcal{S}}\|\dot{\tilde{\eta}}(x_0)\| > 0$.  Moreover, $\sup_{x \in \mathcal{S}^{2\tilde{\epsilon}_{0}}} \|\dot{\tilde{\eta}}(x)\| < \infty$ and $\ddot{\tilde{\eta}}$ is uniformly continuous on $\mathcal{S}^{2\tilde{\epsilon}_{0}}$ with $\sup_{x\in \mathcal{S}^{2\tilde{\epsilon}_{0}}} \|\ddot{\tilde{\eta}} (x)\|_{\mathrm{op}} < \infty$.  Finally, the function $\tilde{\eta}$ is continuous since $\rho_0,\rho_1$ are continuous, and, by~\eqref{eq:etabound},
	\[
	\inf_{x \in \mathbb{R}^d \setminus \mathcal{S}^{\tilde{\epsilon}_{0}}} |\tilde{\eta}(x) - 1/2| > 0.
	\]
	
	Assumption A4($\alpha$):  This holds for $\tilde{P}$ because the marginal distribution of $X$ is unaffected by the label noise and $\tilde{\mathcal{S}} = \mathcal{S}$.
	\bigskip
	
	\textit{Part 2}:  Recall the function $F$ defined in~\eqref{eq:A2eq}.  Let $c_n = F(k/(n-1))$, and set $\epsilon_n =  \{c_n\beta^{1/2} \log^{1/2}(n-1)\}^{-1}$, $\Delta_{n}= k(n-1)^{-1} c_{n}^{d}\log^{d}((n-1)/k)$,  $\mathcal{R}_n = \{x \in \mathbb{R}^d : \bar{f}(x) > \Delta_n \}$ and $\mathcal{S}_n= \mathcal{S} \cap \mathcal{R}_n$.  Then, by \eqref{eq:etabound} and the fact that $\inf_{x_0 \in \mathcal{S}} \|\dot{\tilde{\eta}}(x_0)\| > 0$, there exists $c_0 > 0$ such that for every $\epsilon \in (0,\tilde{\epsilon}_0]$, 
	\[
	\inf_{x \in \mathbb{R}^d \setminus \mathcal{S}^{\epsilon}} |\tilde{\eta}(x) -1/2| >   c_0\epsilon.
	\]
	Now let $\tilde{S}_{n}(x) = k^{-1} \sum_{i = 1}^{k} \mathbbm{1}_{\{\tilde{Y}_{(i)} = 1\}}$,  $X^{n} = (X_{1}, \ldots, X_{n})$ and $\tilde{\mu}(x, X^n) = \mathbb{E}\{\tilde{S}_{n}(x) \mid X^{n}\} = k^{-1}\sum_{i=1}^{k} \tilde{\eta}(X_{(i)})$. Define $A_{k} = \bigl\{\|X_{(k)}(x) - x\|  \leq \epsilon_{n}/2 \ \mbox{for all} \ x \in \mathcal{R}_{n} \bigr\}$.  Now suppose that $z_{1}, \dots, z_{N} \in \mathcal{R}_{n}$ are such that $\|z_{j} - z_{\ell}\| > \epsilon_{n}/4$ for all $j \neq \ell$,  but $\sup_{x \in \mathcal{R}_{n}} \min_{j=1, \dots, N} \|x - z_{j}\| \leq \epsilon_{n}/4$.  Then by the final part of Assumption A2, for $n \geq 2$ large enough that $\epsilon_n/8 \leq \epsilon_0$, we have
	\[
	1 = P_{X}(\mathbb{R}^{d}) \geq  \sum_{j=1}^{N} p_{\epsilon_{n}/8}(z_{j}) \geq \frac{N \mu_{0} \beta^{d/2}\log^{d/2}(n-1)}{8^d(n-1)^{1-\beta}}.
	\]
	Then by a standard binomial tail bound \citep[][Equation~(6), p.~440]{Shorack:86}, for such $n$ and any $M > 0$, 
	\begin{align*}
	\mathbb{P}(A_{k}^c) & = \mathbb{P}\Bigl\{ \sup_{x \in \mathcal{R}_{n} }\|X_{(k)}(x) - x\| > \epsilon_{n}/2\Bigr\} \nonumber \leq \mathbb{P}\Bigl\{\max_{j=1, \ldots, N}  \|X_{(k)}(z_{j}) - z_{j}\| > \epsilon_{n}/4 \Bigr\} \nonumber \\
	& \leq \sum_{j=1}^{N} \mathbb{P}\bigl\{\|X_{(k)}(z_{j}) - z_{j}\| > \epsilon_{n}/4 \bigr\} \nonumber  \leq N \max_{j=1,\ldots,N} \exp\Bigl(-\frac{1}{2}np_{\epsilon_n/4}(z_j) +k \Bigr)  = O(n^{-M}),
	\end{align*}
	uniformly for $k \in K_{\beta}$.
	
	Now, on the event $A_{k}$, for $\epsilon_n < \tilde{\epsilon}_0$ and $x \in \mathcal{R}_{n} \setminus \mathcal{S}^{\epsilon_{n}}$, the $k$ nearest neighbours of $x$ are on the same side of $\mathcal{S}$, so
	\begin{align*}
	|\tilde{\mu}_n(x,X^n) - 1/2| & = \biggl|\frac{1}{k} \sum_{i=1}^{k} \tilde{\eta}(X_{(i)}) - \frac{1}{2}\biggr|  \geq \inf_{z \in B_{\epsilon_n/2}(x)} |\tilde{\eta}(z) - 1/2| \geq c_0 \frac{\epsilon_n}{2}.
	\end{align*}
	Moreover, conditional on $X^n$, $\tilde{S}_n(x)$ is the sum of $k$ independent terms. Therefore, by Hoeffding's inequality,
	\begin{align}
	\label{eq:nM}
	\sup_{k \in K_\beta}& \sup_{x\in \mathcal{R}_{n}\setminus \mathcal{S}^{\epsilon_{n}}}  \bigl|\mathbb{P}\{\tilde{C}_{n}^{k\mathrm{nn}}(x) = 0\} -  \mathbbm{1}_{\{\tilde{\eta}(x) < 1/2\}}\bigr|    \nonumber
	\\  & = \sup_{k \in K_\beta} \sup_{x\in \mathcal{R}_{n}\setminus \mathcal{S}^{\epsilon_{n}}} \bigl|\mathbb{P}\{\tilde{S}_n(x) <1/2\} -  \mathbbm{1}_{\{\tilde{\eta}(x) < 1/2\}}\bigr|   \nonumber
	\\ & = \sup_{k \in K_\beta}\sup_{x\in \mathcal{R}_{n}\setminus \mathcal{S}^{\epsilon_{n}}} \bigl|\mathbb{E}\{ \mathbb{P}\{\tilde{S}_n(x) <1/2 \mid X^n\}- \mathbbm{1}_{\{\tilde{\eta}(x) < 1/2\}}\}\bigr| \nonumber
	\\ & \leq \sup_{k \in K_\beta}\sup_{x\in \mathcal{R}_{n}\setminus \mathcal{S}^{\epsilon_{n}}}\mathbb{E}\bigl[\exp(-2k \{ \tilde{\mu}_n(x, X^n) -1/2\}^2)\mathbbm{1}_{A_{k}} \bigr] + \sup_{k \in K_\beta}\mathbb{P}(A_{k}^c) = O(n^{-M})
	\end{align}
	for every $M > 0$.  
	
	Next, for $x \in \mathcal{S}^{\epsilon_2}$, we have $|\eta(x) - 1/2| < \delta$, and therefore, letting $t = \eta(x) - 1/2$, from~\eqref{Eq:geq} we can write
	\begin{align*}
	2\eta(x) -1 &- \frac{2\tilde{\eta}(x) -1}{1 - 2g(1/2) + \dot{g}(1/2)} 
	\\ & = \{2\eta(x) - 1\} \Bigl\{1 - \frac{1 - g(\eta(x)) - g(1-\eta(x))}{1 - 2g(1/2) + \dot{g}(1/2)} \Bigr\} - \frac{g(\eta(x)) - g(1 - \eta(x))}{1 - 2g(1/2) + \dot{g}(1/2)}
	\\ & =  2t \Bigl\{1 - \frac{1 - g(1/2+t) - g(1/2-t)}{1 - 2g(1/2) + \dot{g}(1/2)} \Bigr\} - \frac{g(1/2 + t) - g(1/2 - t)}{1 - 2g(1/2) + \dot{g}(1/2)} = G(t),
	\end{align*}
	say.  Observe that
	\[
	\dot{G}(t) =  2 \Bigl\{1 - \frac{1 - g(1/2+t) - g(1/2-t)}{1 - 2g(1/2) + \dot{g}(1/2)} \Bigr\} +  \frac{(2t-1) \dot{g}(1/2+t) - (2t+1) \dot{g}(1/2-t)}{1 - 2g(1/2) + \dot{g}(1/2)};
	\]
	and
	\[
	\ddot{G}(t) =  \frac{4\{\dot{g}(1/2+t) - \dot{g}(1/2-t)\}}{1 - 2g(1/2) + \dot{g}(1/2)} + \frac{(2t-1) \ddot{g}(1/2+t) + (2t+1) \ddot{g}(1/2-t)}{1 - 2g(1/2) + \dot{g}(1/2)}.
	\]
	In particular, we have $G(0) = 0$, $\dot{G}(0) = 0 $, $\ddot{G}(0) = 0$ and $\ddot{G}$ is bounded on $(-\delta,\delta)$. 
	
	Now there exists $n_0$ such that $\epsilon_n < \epsilon_2$, for all $n > n_0$ and $k \in K_\beta$.  Therefore, writing $\mathcal{S}_n^{\epsilon_n} = \mathcal{S}^{\epsilon_n} \cap \mathcal{R}_n$, for $n > n_0$, we have that
	\begin{align*} 
	&\Biggl|R(\tilde{C}^{k\mathrm{nn}}) - R(C^{\mathrm{Bayes}}) - \frac{\tilde{R}(\tilde{C}^{k\mathrm{nn}}) - \tilde{R}(\tilde{C}^{\mathrm{Bayes}}) }{1- 2g(1/2) + \dot{g}(1/2)} \Biggr | 
	\\& \hspace{0pt}  = \Biggl|\int_{\mathbb{R}^{d}} [\mathbb{P}\{\tilde{C}^{k\mathrm{nn}}(x) = 0\} - \mathbbm{1}_{\{\tilde{\eta}(x) < 1/2\}}] \Bigl\{2\eta(x) -1 - \frac{2\tilde{\eta}(x) -1}{1 - 2g(1/2) + \dot{g}(1/2)} \Bigr\}\, dP_X(x)\Biggr|
	\\ & \hspace{0pt}  \leq \Biggl|\int_{\mathcal{S}_n^{\epsilon_n}} [\mathbb{P}\{\tilde{C}^{k\mathrm{nn}}(x) = 0\} - \mathbbm{1}_{\{\tilde{\eta}(x) < 1/2\}}] \Bigl\{2\eta(x) -1 - \frac{2\tilde{\eta}(x) -1}{1 - 2g(1/2) + \dot{g}(1/2)} \Bigr\}\, dP_X(x)\Biggr|  
	\\ & \hspace{60 pt} + \biggl(1+ \frac{1}{1 - 2g(1/2) + \dot{g}(1/2)}\biggr)P_X(\mathcal{R}_n^c) + O(n^{-M}),
	\end{align*} 
	uniformly for $k \in K_\beta$, where the final claim uses~\eqref{eq:nM}.  Then, by a Taylor expansion of $G$ about $t = 0$, we have that
	\begin{align*} 
	&\Biggl|\int_{\mathcal{S}_n^{\epsilon_n}} [\mathbb{P}\{\tilde{C}^{k\mathrm{nn}}(x) = 0\} - \mathbbm{1}_{\{\tilde{\eta}(x) < 1/2\}}] \Bigl\{2\eta(x) -1 - \frac{2\tilde{\eta}(x) -1}{1 - 2g(1/2) + \dot{g}(1/2)} \Bigr\}\, dP_X(x)\Biggr|
	\\& \leq \frac{1}{2}\sup_{t \in (-\delta, \delta)} |\ddot{G}(t)|  \int_{\mathcal{S}_n^{\epsilon_n}} |\mathbb{P}\{\tilde{C}^{k\mathrm{nn}}(x) = 0\} - \mathbbm{1}_{\{\tilde{\eta}(x) < 1/2\}}| \{2\eta(x) -1\}^{2}  \, dP_X(x)
	\\ & \leq \frac{1}{2} \sup_{t \in (-\delta, \delta)} |\ddot{G}(t)| \sup_{x \in \mathcal{S}_n^{\epsilon_n}} |2\eta(x) -1|   \int_{\mathcal{S}_n^{\epsilon_n}} \{\mathbb{P}\{\tilde{C}^{k\mathrm{nn}}(x) = 0\} - \mathbbm{1}_{\{\tilde{\eta}(x) < 1/2\}}\} \{2\eta(x) -1\}  \, dP_X(x)\
	\\ & \leq \frac{1}{2}\sup_{t \in (-\delta, \delta)} |\ddot{G}(t)| \sup_{x \in \mathcal{S}_n^{\epsilon_n}} |2\eta(x) -1| \{  R(\tilde{C}^{k\mathrm{nn}}) - R(C^{\mathrm{Bayes}}) \} 
	\\ & \leq \frac{1}{2}\sup_{t \in (-\delta, \delta)} |\ddot{G}(t)| \sup_{x \in \mathcal{S}_n^{\epsilon_n}} |2\eta(x) -1| \frac{\tilde{R}(\tilde{C}^{k\mathrm{nn}}) - \tilde{R}(\tilde{C}^{\mathrm{Bayes}}) }{(1-2\rho^*)(1-a^*)}  =  o\Bigl(\tilde{R}(\tilde{C}^{k\mathrm{nn}}) - \tilde{R}(\tilde{C}^{\mathrm{Bayes}}) \Bigr),
	\end{align*}
	uniformly for $k \in K_\beta$.  
	
	Finally, to bound $P_X(\mathcal{R}_n^c)$, we have by the moment condition in Assumption A4($\alpha$) and H\"older's inequality, that for any $u \in (0,1)$, and $v > 0$,
	\begin{align*} 
	P_X(\mathcal{R}_n^c) &=  \mathbb{P}\{ \bar{f}(X) \leq \Delta_n \}  \leq  (\Delta_n)^\frac{\alpha(1-u)}{\alpha+d} \int_{x: \bar{f}(x) \leq  \Delta_n}  \bar{f}(x)^{1-\frac{\alpha(1-u)}{\alpha+d}} \, dx \nonumber
	\\ &  \leq  (\Delta_n)^\frac{\alpha(1-u)}{\alpha+d} \Bigl\{\int_{\mathbb{R}^d} (1+\|x\|^\alpha)\bar{f}(x) \, dx\Bigr\}^{1- \frac{\alpha(1-u)}{\alpha+d}}  \nonumber
	\\ & \hspace{120 pt} \Bigl\{\int_{\mathbb{R}^d} \frac{1}{(1+\|x\|^\alpha)^\frac{d+\alpha u}{\alpha(1-u)} } \, dx \Bigr\}^\frac{\alpha(1-u)}{\alpha+d}  = o\biggl(\Bigl(\frac{k}{n}\Bigr)^{\frac{\alpha(1-u)}{\alpha+d} - v}\biggr), \nonumber
	\end{align*}
	uniformly for $k \in K_{\beta}$.  
	
	Since $u \in (0,1)$ was arbitrary, we have shown that, that for any $v > 0$,
	\[
	R(\tilde{C}^{k\mathrm{nn}}) - R(C^{\mathrm{Bayes}}) - \frac{\tilde{R}(\tilde{C}^{k\mathrm{nn}}) - \tilde{R}(\tilde{C}^{\mathrm{Bayes}}) }{1- 2g(1/2) + \dot{g}(1/2)} = o\biggl(\tilde{R}(\tilde{C}^{k\mathrm{nn}}) - \tilde{R}(\tilde{C}^{\mathrm{Bayes}}) + \Bigl(\frac{k}{n}\Bigr)^{\frac{\alpha}{\alpha+d} - v}\biggr),
	\]
	uniformly for $k \in K_\beta$.   Since Assumptions A1,  A2, A3 and  A4($\alpha$) hold for $\tilde{P}$, the proof is completed by an application of \citet[][Theorem~1]{CBS:2017}, together with~\eqref{Eq:etadotddot}.  
\end{proof}

\subsection{Proofs from Section~\ref{sec:SVM}}
\label{sec:SVMproofs}
Before presenting the proofs from this section, we briefly discuss measurability issues for the SVM classifier.  Since this is constructed by solving the minimization problem in~\eqref{eq:SVMopttilde}, it is not immediately clear that it is measurable.  It is convenient to let $\mathcal{C}_d$ denote the set of all measurable functions from $\mathbb{R}^d$ to $\{0,1\}$.  By \citet[Definition~6.2, Lemma~6.3 and Lemma~6.23]{Steinwart:2008}, we have that the function $\tilde{C}_n^{\mathrm{SVM}}: (\mathbb{R}^d \times \{0,1\})^n \rightarrow \mathcal{C}_d$ and the map from $(\mathbb{R}^d \times \{0,1\})^n \times \mathbb{R}^d$ to $\{0,1\}$ given by $\bigl((x_1,\tilde{y}_1),\ldots,(x_n,\tilde{y}_n),x\bigr) \mapsto \tilde{C}_n^{\mathrm{SVM}}(x)$ are measurable with respect to the universal completion of the product $\sigma$-algebras on $(\mathbb{R}^d \times \{0,1\})^n$ and $(\mathbb{R}^d \times \{0,1\})^n \times \mathbb{R}^d$, respectively.  We can therefore avoid measurability issues by taking our underlying probability space $(\Omega, \mathcal{F}, \mathbb{P})$ to be as follows: let $\Omega = (\mathbb{R}^d \times \{0,1\} \times \{0,1\})^{n+1}$, and $\mathcal{F}$ to be the universal completion of the product $\sigma$-algebra on $\Omega$.  Moreover, we let $\mathbb{P}$ denote the canonical extension of the product measure on $\Omega$.  The triples $(X_1, Y_1,\tilde{Y}_1), \ldots, (X_n, Y_n, \tilde{Y}_n),(X,Y,\tilde{Y})$ can be taken to be the coordinate projections of the $(n+1)$ components of $\Omega$.  

\begin{proof}[Proof of Theorem~\ref{thm:SVMhet}]
	We first aim to show that $\tilde{P}$ satisfies the margin assumption with parameter $\gamma_1$, and has geometric noise exponent $\gamma_2$.  For the first of these claims, by~\eqref{Eq:Akappa}, we have for all $t > 0$ that
	\begin{align*}
	P_{X}(\{x \in \mathbb{R}^{d} : 0 < |\tilde{\eta}(x) - 1/2| \leq t\}) & \leq P_{X}\bigl(\bigl\{x: 0 < |\eta(x) - 1/2|(1-2\rho^{*})(1-a^{*})\leq t\bigr\}\bigr) 
	\\ & \leq \frac{\kappa_{1}}{(1-2\rho^{*})^{\gamma_1}(1-a^{*})^{\gamma_1}}  t^{\gamma_1},
	\end{align*}
	as required;  see also the discussion in Section 3.9.1 of the 2015 Australian National University PhD thesis by M. van Rooyen (\url{https://openresearch-repository.anu.edu.au/handle/1885/99588}).    The proof of the second claim is more involved, because we require a bound on $|2\tilde{\eta}(x) - 1|$ in terms of $|2\eta(x) - 1|$.   We consider separately the cases where $|\eta(x)-1/2|$ is small and large, and for $r > 0$, define $\mathcal{E}_r = \{x \in \mathbb{R}^d:|\eta(x)-1/2| < r\}$.  For $x \in \mathcal{E}_{\delta} \cap \mathcal{S}^c$, we can write $t_0 = \eta(x) - 1/2 \in (-\delta,\delta)$, so that by~\eqref{Eq:geq} again,
	\begin{align}
	\label{Eq:Step1}
	2\tilde{\eta}(x) -1 &= \{2\eta(x) -1\} \Bigl\{ 1 - g(\eta(x)) - g(1-\eta(x))   + \frac{g(\eta(x)) - g(1 - \eta(x))}{2\eta(x) -1}\Bigr\} \nonumber \\
	&= \{2\eta(x) -1\}\biggl\{1 - g(1/2 + t_0) - g(1/2-t_0)   + \frac{g(1/2+t_0) - g(1/2 - t_0) }{ 2t_0}\biggr\}.
	\end{align}
	Now, by reducing $\delta > 0$ if necessary, and since $1 - 2g(1/2) + \dot{g}(1/2) > 0$ by hypothesis, we may assume that
	\begin{equation}
	\label{Eq:Step2}
	\biggl|1 - g(1/2 + t_0) - g(1/2-t_0)   + \frac{g(1/2+t_0) - g(1/2 - t_0) }{ 2t_0}\biggr| \leq 2\{1 - 2g(1/2) + \dot{g}(1/2)\}
	\end{equation}
	for all $t_0 \in [-\delta,\delta]$.  Moreover, for $x \in \mathcal{E}_{\delta}^c$, we have
	\begin{align}
	\label{Eq:Step3}
	\Bigl| \{2\eta(x) - 1\} & \{1 - \rho_0(x) - \rho_1(x)\} + \rho_{0}(x) - \rho_{1}(x) \Bigr| \nonumber
	\\ & =  |2\eta(x) - 1|  \biggl|1 - \rho_0(x) - \rho_1(x) + \frac{\rho_{0}(x) - \rho_{1}(x)}{2\eta(x)-1} \biggr| \nonumber
	\\ & \leq |2\eta(x) - 1| \biggl\{ 1 + \frac{|\rho_{0}(x) - \rho_{1}(x)|}{2\delta}\biggr\}  \leq  |2\eta(x) - 1| \Bigl( 1 + \frac{1}{2\delta_0}\Bigr).
	\end{align}
	Now that we have the required bounds on $|2\tilde{\eta}(x)-1|$, we deduce from~\eqref{Eq:Step1},~\eqref{Eq:Step2} and~\eqref{Eq:Step3} that
	\begin{align*}
	\int_{\mathbb{R}^{d}} |2\tilde{\eta}(x) &- 1| \exp\Bigl(-\frac{\tau_{x}^{2}}{t^2}\Bigr) \, dP_{X}(x) \\
	&= \int_{\mathbb{R}^{d}} \Bigl| \{2\eta(x) - 1\} \{1 - \rho_0(x) - \rho_1(x)\} + \rho_{0}(x) - \rho_{1}(x) \Bigr| \exp\Bigl(-\frac{\tau_{x}^{2}}{t}\Bigr) \, dP_{X}(x) \\
	&\leq \max\Bigl\{2 -  4g(1/2) + 2\dot{g}(1/2), 1 + \frac{1}{2\delta_{0}}\Bigr\}\int_{\mathbb{R}^{d}} |2\eta(x) - 1| \exp\Bigl(-\frac{\tau_{x}^{2}}{t}\Bigr) \, dP_{X}(x) \\
	&\leq \max\Bigl\{2 -  4g(1/2) + 2\dot{g}(1/2), 1 + \frac{1}{2\delta_{0}}\Bigr\}\kappa_2 t^{\gamma_2d},
	\end{align*}
	so $\tilde{P}$ does indeed have geometric noise exponent $\gamma_2$.  
	
	Now, for an arbitrary classifier $C$, let $\tilde{L}(C) = \tilde{P}\bigl(\{(x,y) \in \mathbb{R}^d \times \{0,1\}:C(x) \neq y\}\bigr)$ denote the test error.  The quantity $\tilde{ L}(\tilde{C}^{\mathrm{SVM}})$ is random because the classifier depends on the training data and the probability in the definition of $\tilde{L}(\cdot)$ is with respect to test data only.  It follows by \citet[Theorem 2.8]{Steinwart:2007} that, for all $\epsilon >0$, there exists $M>0$ such that for all $n \in \mathbb{N}$ and all $\tau \geq 1$,
	\[
	\mathbb{P}\Bigl(\tilde{ L}(\tilde{C}^{\mathrm{SVM}}) - \tilde{L}(\tilde{C}^{\mathrm{Bayes}}) > M\tau^2 n^{-\Gamma + \epsilon}\Bigr) \leq e^{-\tau}.
	\]
	We conclude by Theorem~\ref{thm:hetnoise}(ii) that 
	\begin{align*}
	R(\tilde{C}^{\mathrm{SVM}}&) - R(C^{\mathrm{Bayes}}) \leq \frac{\tilde{R}(\tilde{C}^{\mathrm{SVM}}) - \tilde{R}(\tilde{C}^{\mathrm{Bayes}})}{(1-2\rho^*)(1-a^*)} \\
	&= \frac{1}{(1-2\rho^*)(1-a^*)} \int_{0}^{\infty} \mathbb{P}\Bigl(\tilde{ L}(\tilde{C}^{\mathrm{SVM}}) - \tilde{L}(\tilde{C}^{\mathrm{Bayes}}) > u \Bigr) \, du
	\\ & = \frac{2M n^{-\Gamma + \epsilon}}{(1-2\rho^*)(1-a^*)}  \int_{0}^{\infty} \tau \mathbb{P} \Bigl(\tilde{ L}(\tilde{C}^{\mathrm{SVM}}) - \tilde{L}(\tilde{C}^{\mathrm{Bayes}}) > M\tau^2 n^{-\Gamma + \epsilon}  \Bigr) \, d\tau
	\\ & \leq \frac{2M n^{-\Gamma + \epsilon}}{(1-2\rho^*)(1-a^*)}\biggl\{\int_{0}^{1} \tau \, d\tau + \int_{1}^\infty \tau \exp(-\tau) \, d\tau\biggr\} = \frac{M n^{-\Gamma + \epsilon}}{(1-2\rho^*)(1-a^*)} \Bigl(1 + \frac{4}{e}\Bigr),
	\end{align*}
	as required.
\end{proof}

\subsection{Proofs from Section~\ref{sec:LDA}}

\begin{proof}[Proof of Lemma~\ref{lem:tilde}]
	Since, for homogeneous noise, the pair $(X,Y)$ and the noise indicator $Z$ are independent, we have $\mathbb{P}\{C(X) \neq Y \mid Z = r \} = \mathbb{P}\{C(X) \neq Y \}$, for $r = 0,1$.  It follows that 
	\begin{align*}
	\tilde{R}(C) \! = \! \mathbb{P}\{C(X) \neq \tilde{Y}\} & = \mathbb{P}(Z = 1) \mathbb{P}\{C(X) \neq Y \mid Z = 1 \} + \mathbb{P}(Z = 0) \mathbb{P}\{C(X) = Y \mid Z = 0\}
	\\ & = (1 - \rho)\mathbb{P}\{C(X) \neq Y  \} + \rho [1 - \mathbb{P}\{C(X) \neq Y\}]\\
	& = \rho + (1-2\rho)R(C).
	\end{align*} 
	Rearranging terms gives the first part of the lemma,  and the second part follows immediately.
\end{proof}

\bigskip

\begin{proof}[Proof of Theorem~\ref{thm:LDAhomo}]
	For $r \in \{0,1\}$, we have that ${\hat{\pi}}_r \stackrel{\mathrm{a.s.}}{\rightarrow}  (1-\rho) \pi_r + \rho \pi_{1-r} = (1-2\rho) \pi_r + \rho$.   Now, writing 
	\[
	{\hat{\mu}}_r  = \frac{n^{-1}\sum_{i = 1}^n X_i \mathbbm{1}_{\{\tilde{Y}_i = r\}} }{{\hat{\pi}}_r} = \frac{n^{-1}\sum_{i = 1}^n X_i \mathbbm{1}_{\{\tilde{Y}_i = r\}} (\mathbbm{1}_{\{Y_i = r\}}+ \mathbbm{1}_{\{Y_i = 1 - r\}})} {{\hat{\pi}}_r},
	\]
	we see that
	\[
	{\hat{\mu}}_r  \stackrel{\mathrm{a.s.}}{\rightarrow} \frac{(1-\rho) \pi_{r} \mu_r + \rho \pi_{1-r} \mu_{1-r}}{(1-\rho) \pi_r + \rho \pi_{1-r}}.
	\]  
	Hence 
	\begin{align*}
	{\hat{\mu}}_1 + {\hat{\mu}}_0 &\stackrel{\mathrm{a.s.}}{\rightarrow}  \frac{(1-\rho) \pi_{1} \mu_1 + \rho \pi_{0} \mu_{0}}{(1-\rho) \pi_1 + \rho \pi_{0}}+\frac{(1-\rho) \pi_{0} \mu_0 + \rho \pi_{1} \mu_{1}}{(1-\rho) \pi_0 + \rho \pi_{1}}
	\\ & = \mu_{1} \biggl\{\frac{(1-2\rho)^{2}\pi_0\pi_1 + 2\rho(1-\rho)\pi_{1} } {(1-2\rho)^{2} \pi_0\pi_1 + \rho (1-\rho)  }\biggr\} + \mu_{0} \biggl\{\frac{ (1-2\rho)^{2} \pi_0\pi_1 + 2\rho(1-\rho) \pi_0 }{(1-2\rho)^{2} \pi_0 \pi_1 + \rho(1-\rho) }\biggr\}.
	\end{align*}
	Moreover
	\begin{align*} 
	{\hat{\mu}}_1 - {\hat{\mu}}_0  &\stackrel{\mathrm{a.s.}}{\rightarrow}  \frac{(1-\rho) \pi_{1} \mu_1 + \rho \pi_{0} \mu_{0}}{(1-\rho) \pi_1 + \rho \pi_{0}} - \frac{(1-\rho) \pi_{0} \mu_0 + \rho \pi_{1} \mu_{1}}{(1-\rho) \pi_0 + \rho \pi_{1}}
	\\ & = \biggl\{\frac{(1-2\rho) \pi_{0} \pi_{1}} {(1-2\rho)^{2} \pi_0\pi_1 + \rho (1-\rho)  }\biggr\} (\mu_{1} - \mu_{0}).
	\end{align*}   
	Observe further that 
	\begin{align*}
	{\hat{\Sigma}} &\stackrel{\mathrm{a.s.}}{\rightarrow} \mathrm{cov}\bigl((X_1 - \tilde{\mu}_1)(X_1 - \tilde{\mu}_1)^T\mathbbm{1}_{\{\tilde{Y}_1=1\}} + (X_1 - \tilde{\mu}_0)(X_1 - \tilde{\mu}_0)^T\mathbbm{1}_{\{\tilde{Y}_1=0\}}\bigr) \\  
	&= \{(1-2\rho) \pi_1 + \rho\} \tilde{\Sigma}_1 +   \{(1-2\rho) \pi_0 + \rho\} \tilde{\Sigma}_0,
	\end{align*}
	where $\tilde{\Sigma}_r = \mathrm{cov}(X \mid \tilde{Y} = r)$, and we now seek to express $\tilde{\Sigma}_0$ and $\tilde{\Sigma}_1$ in terms of $\rho$, $\pi_0$, $\pi_1$, $\mu_0$, $\mu_1$ and $\Sigma$.  To that end, we have that
	\[
	\tilde{\Sigma}_r = \mathbb{E}\{ \mathrm{cov}(X \mid Y, \tilde{Y} = r)\mid \tilde{Y} = r \} + \mathrm{cov}\{ \mathbb{E}( X \mid Y, \tilde{Y} = r) \mid  \tilde{Y} = r\}  = \Sigma +  \mathrm{cov}\{ \mu_Y \mid  \tilde{Y} = r\}.
	\]
	Note that 
	\[
	\mathbb{P}(Y =1 \mid  \tilde{Y} = 1) = \frac{ \mathbb{P}(Y =1 ,  \tilde{Y} = 1)}{ \mathbb{P}(\tilde{Y} = 1) } = \frac{\pi_1 (1-\rho)}{ \pi_1  (1-\rho) + \pi_0 \rho } =  \frac{\pi_1 (1-\rho)}{ \pi_1 (1 - 2\rho) + \rho }.
	\]
	Hence
	\[
	\mathbb{E}(\mu_Y \mid  \tilde{Y} = 1) = \mu_1 \mathbb{P}(Y =1 \mid  \tilde{Y} = 1) + \mu_0 \mathbb{P}(Y =0 \mid  \tilde{Y} = 1) = \frac{\pi_1 \mu_1 (1-\rho) + \pi_0 \mu_0 \rho}{ \pi_1 (1-2\rho) + \rho }.
	\]
	It follows that 
	\begin{align*}
	\tilde{\Sigma}_1 &= \frac{\pi_1 (1-\rho)}{ \pi_1 (1 - 2\rho) + \rho } \Bigl(\mu_1 -  \frac{\pi_1 \mu_1 (1-\rho) + \pi_0 \mu_0 \rho }{ \pi_1 (1 - 2\rho) + \rho } \Bigr) \Bigl(\mu_1 -  \frac{\pi_1 \mu_1 (1-\rho) + \pi_0 \mu_0 \rho }{ \pi_1 (1 - 2\rho) + \rho } \Bigr)^T 
	\\ & \hspace{20pt} +  \frac{\pi_0 \rho}{ \pi_1 (1 - 2\rho) + \rho } \Bigl(\mu_0 -  \frac{\pi_1 \mu_1 (1-\rho) + \pi_0 \mu_0 \rho }{ \pi_1 (1 - 2\rho) + \rho } \Bigr) \Bigl(\mu_0 -  \frac{\pi_1 \mu_1 (1-\rho) + \pi_0 \mu_0 \rho }{ \pi_1 (1 - 2\rho) + \rho } \Bigr)^T
	\\ &= \frac{\pi_1 (1-\rho)}{ \pi_1 (1 - 2\rho) + \rho } \Bigl( \frac{\pi_0 \rho (\mu_1 - \mu_0) }{ \pi_1 (1 - 2\rho) + \rho } \Bigr) \Bigl( \frac{\pi_0 \rho (\mu_1 - \mu_0) }{ \pi_1 (1 - 2\rho) + \rho } \Bigr)^T
	\\ &  \hspace{20pt}  +  \frac{\pi_0 \rho}{ \pi_1 (1 - 2\rho) + \rho } \Bigl(  \frac{\pi_1(1- \rho)(\mu_0 - \mu_1)}{ \pi_1 (1 - 2\rho) + \rho } \Bigr)\Bigl(  \frac{\pi_1(1- \rho)(\mu_0 - \mu_1)}{ \pi_1 (1 - 2\rho) + \rho } \Bigr)^T
	\\ & = \frac{\pi_0 \pi_1\rho(1-\rho)}{ (\pi_1 (1 - \rho) + \pi_0\rho)^2 }  (\mu_1 - \mu_0)(\mu_1 - \mu_0)^T.
	\end{align*} 
	Similarly
	\[
	\tilde{\Sigma}_0 = \frac{\pi_0\pi_1 \rho(1-\rho)}{ (\pi_0 (1 - \rho) + \pi_1\rho)^2 }  (\mu_1 - \mu_0)(\mu_1 - \mu_0)^T.
	\]
	We deduce that
	\[
	{\tilde{\Sigma}} \stackrel{\mathrm{a.s.}}{\rightarrow} \Sigma + \frac{\pi_0 \pi_1 \rho(1-\rho)}{\pi_1\pi_0(1-2\rho)^2 + \rho(1-\rho) } (\mu_1 - \mu_0)(\mu_1 - \mu_0)^T = \Sigma + \alpha (\mu_1 - \mu_0)(\mu_1 - \mu_0)^T,
	\]
	where $\alpha = \pi_0 \pi_1 \rho(1-\rho)/\{\pi_0\pi_1(1-2\rho)^2 + \rho(1-\rho)\}$.   Now
	\[
	\bigl(\Sigma + \alpha (\mu_1 - \mu_0)(\mu_1 - \mu_0)^T\bigr)^{-1} = \Sigma^{-1} - \frac{ \alpha \Sigma^{-1} (\mu_1 - \mu_0) (\mu_1 - \mu_0)^T \Sigma^{-1}}{1 + \alpha \Delta^2},
	\]
	where $\Delta^2 = (\mu_1 - \mu_0)^T\Sigma^{-1} (\mu_1 - \mu_0)$.   It follows that there exists an event $\Omega_0$ with $\mathbb{P}(\Omega_0) = 1$ such that on this event, for every $x \in \mathbb{R}^d$,
	\begin{align*}
	\Bigl(x - & \frac{{\hat{\mu}}_1 + {\hat{\mu}}_0}{2}\Bigr)^T {\hat{\Sigma}}^{-1} ({\hat{\mu}}_1 - {\hat{\mu}}_0) 
	\\&\rightarrow \biggl[x -   \frac{\mu_{1}}{2} \Bigl\{\frac{(1-2\rho)^{2}\pi_0\pi_1 + 2\rho(1-\rho)\pi_{1} } {(1-2\rho)^{2} \pi_0\pi_1 + \rho (1-\rho)  }\Bigr\} + \frac{\mu_{0}}{2} \Bigl\{\frac{ (1-2\rho)^{2} \pi_0\pi_1 + 2\rho(1-\rho) \pi_0 }{(1-2\rho)^{2} \pi_0 \pi_1 + \rho(1-\rho) }\Bigr\}\biggr]^T 
	\\& \hspace{30pt}  \Bigl(\Sigma^{-1} - \frac{ \alpha \Sigma^{-1} (\mu_1 - \mu_0) (\mu_1 - \mu_0)^T \Sigma^{-1}}{1 + \alpha \Delta^2} \Bigr) \Bigl\{\frac{(1-2\rho) \pi_0 \pi_1} {(1-2\rho)^{2} \pi_0\pi_1 + \rho (1-\rho)  }\Bigr\} (\mu_{1} - \mu_{0})
	\\& =  \biggl[x -   \frac{\mu_{1}}{2} \Bigl\{\frac{(1-2\rho)^{2}\pi_0\pi_1 + 2\rho(1-\rho)\pi_{1} } {(1-2\rho)^{2} \pi_0\pi_1 + \rho (1-\rho)  }\Bigr\} + \frac{\mu_{0}}{2} \Bigl\{\frac{ (1-2\rho)^{2} \pi_0\pi_1 + 2\rho(1-\rho) \pi_0 }{(1-2\rho)^{2} \pi_0 \pi_1 + \rho(1-\rho) }\Bigr\}\biggr]^T 
	\\& \hspace{60pt}  \Bigl(\frac{ 1}{1 + \alpha\Delta^{2}}\Bigr) \Bigl\{\frac{(1-2\rho) \pi_{0} \pi_{1}} {(1-2\rho)^{2} \pi_0\pi_1 + \rho (1-\rho)  }\Bigr\} \Sigma^{-1}(\mu_{1} - \mu_{0})
	\\& =  \biggl(x - \frac{\mu_{1} + \mu_{0}}{2}\biggr)^T  \Bigl(\frac{ 1}{1 + \alpha\Delta^{2}}\Bigr) \Bigl\{\frac{(1-2\rho) \pi_{0} \pi_{1}} {(1-2\rho)^{2} \pi_0\pi_1 + \rho (1-\rho)  }\Bigr\} \Sigma^{-1}(\mu_{1} - \mu_{0})
	\\ &  \hspace{30pt}  -  \biggl[\frac{\mu_{1}}{2} \Bigl\{\frac{(2\pi_{1} - 1)\rho(1-\rho)} {(1-2\rho)^{2} \pi_0\pi_1 + \rho (1-\rho)  }\Bigr\} + \frac{\mu_{0}}{2} \Bigl\{\frac{ (2\pi_{0} - 1)\rho(1-\rho)}{(1-2\rho)^{2} \pi_0 \pi_1 + \rho(1-\rho) }\Bigr\} \biggr]^T 
	\\& \hspace{ 60pt}  \Bigl(\frac{ 1}{1 + \alpha\Delta^{2}}\Bigr) \Bigl\{\frac{(1-2\rho) \pi_{0} \pi_{1}} {(1-2\rho)^{2} \pi_0\pi_1 + \rho (1-\rho)  }\Bigr\} \Sigma^{-1}(\mu_{1} - \mu_{0}).
	\end{align*}
	Hence, on $\Omega_0$,
	\[
	\lim_{n \rightarrow \infty} \tilde{C}^{\mathrm{LDA}}(x) = \left\{ \begin{array}{ll} 1  & \text{if } c_0 +  \bigl(x - \frac{\mu_1 + \mu_0}{2}\bigr)^T \Sigma^{-1} (\mu_1 - \mu_0) > 0
	\\ 0 & \text{if } c_0 + \bigl(x - \frac{\mu_1 + \mu_0}{2}\bigr)^T \Sigma^{-1} (\mu_1 - \mu_0) < 0, \end{array} \right.
	\]
	where 
	\begin{align*}
	c_0  & = \frac{(1 + \alpha \Delta^2)\rho(1-\rho)}{\alpha(1-2\rho)} \log\biggl(\frac{(1-2\rho)\pi_1 + \rho}{(1-2\rho)\pi_0 + \rho} \biggr) - \frac{(\pi_{1} - \pi_{0})\alpha\Delta^2}{2\pi_0\pi_1} .
	\end{align*}
	This proves the first claim of the theorem.  It follows that
	\begin{align*} 
	\lim_{n \rightarrow \infty} R(\tilde{C}^{\mathrm{LDA}}) & =  \pi_0\Phi\biggl(\frac{c_0}{\Delta} - \frac{\Delta}{2}\biggr) +  \pi_1\Phi\biggl(-\frac{c_0}{\Delta} - \frac{\Delta}{2}\biggr), 
	\end{align*}
	which proves the second claim.  Now consider the function
	\[
	\psi(c_0) = \pi_0\Phi\biggl(\frac{c_0}{\Delta} - \frac{\Delta}{2}\biggr) +  \pi_1\Phi\biggl(-\frac{c_0}{\Delta} - \frac{\Delta}{2}\biggr).
	\]
	We have
	\[
	\dot{\psi}(c_0) = \frac{\pi_0}{\Delta}\phi\biggl(\frac{c_0}{\Delta} - \frac{\Delta}{2}\biggr) -  \frac{\pi_1}{\Delta} \phi\biggl(-\frac{c_0}{\Delta} - \frac{\Delta}{2}\biggr) =  \frac{\pi_0}{\Delta}\phi\biggl(\frac{c_0}{\Delta} - \frac{\Delta}{2}\biggr) \Bigl\{1  -  \frac{\pi_1}{\pi_0} \exp(-c_0)\Bigr\},
	\]
	where $\phi$ denotes the standard normal density function.  Since $\mathrm{sgn}\bigl(\dot{\psi}(c_0)\bigr) = \mathrm{sgn}\bigl(c_0 - \log(\pi_1/\pi_0)\bigr)$, we deduce that
	\[
	\pi_0\Phi\biggl(\frac{c_0}{\Delta} - \frac{\Delta}{2}\biggr) +  \pi_1\Phi\biggl(-\frac{c_0}{\Delta} - \frac{\Delta}{2}\biggr) \geq R(C^{\mathrm{Bayes}}),
	\]
	and it remains to show that if $\rho \in (0,1/2)$ and $\pi_1 \neq \pi_0$, then there is a unique $\Delta > 0$ with $c_0 = \log(\pi_1/\pi_0)$.   To that end, suppose without loss of generality that $\pi_1 > \pi_0$ and note that
	\begin{align*}
	\frac{(\pi_{1} - \pi_{0})(1-2\rho)}{(1-2\rho)^{2} \pi_0 \pi_1 + \rho(1-\rho)} &= \frac{\pi_{1} (1-2\rho) + \rho }{(1-2\rho)^{2} \pi_1 \pi_{0} + \rho(1-\rho)} - \frac{\pi_{0} (1-2\rho) + \rho }{(1-2\rho)^{2} \pi_1 \pi_{0} + \rho(1-\rho)}   
	\\ &= \frac{1 }{(1-2\rho)\pi_{0} + \rho} - \frac{1}{(1-2\rho) \pi_1 + \rho}.
	\end{align*}
	Hence, writing $t = (1-2\rho)\pi_1 + \rho > 1/2$, we have
	\[ 
	\log\Bigl(\frac{(1-2\rho)\pi_1 + \rho}{(1-2\rho)\pi_0 + \rho} \Bigr) - \frac{(\pi_{1} - \pi_{0})(1-2\rho)}{2\{(1-2\rho)^{2} \pi_1 \pi_{0} + \rho(1-\rho)\}} = 
	\log\Bigl(\frac{t}{1-t} \Bigr) + \frac{1}{2t} - \frac{1}{2(1-t)} < 0.
	\]
	Next, let
	\begin{align*}
	\chi(\pi_1) &= \log\Bigl(\frac{\pi_1}{\pi_0}\Bigr) - \frac{\rho(1-\rho)}{\alpha(1-2\rho)} \log\Bigl(\frac{(1-2\rho)\pi_1 + \rho}{(1-2\rho)\pi_0 + \rho} \Bigr) \nonumber
	\\ &= \log\Bigl(\frac{\pi_1}{1 - \pi_1}\Bigr) - \frac{(1-2\rho)^{2}\pi_{1}(1-\pi_{1}) + \rho(1-\rho)}{(1-2\rho)\pi_1(1-\pi_1)} \log\Bigl(\frac{(1-2\rho)\pi_1 + \rho}{(1-2\rho)(1-\pi_1) + \rho} \Bigr).
	\end{align*}
	Then
	\[
	\dot{\chi}(\pi_1) = \frac{\rho(1-\rho) (1- 2\pi_1) }{(1-2\rho) \pi_1^2(1-\pi_1)^2} \log\Bigl(\frac{(1-2\rho)\pi_1 + \rho}{(1-2\rho)(1-\pi_1) + \rho} \Bigr) < 0, 
	\]
	for all $\pi_1 \in (0,1)$.  Since $\chi(1/2) = 0$, we conclude that $\chi(\pi_1) < 0$ for all $\pi_1 > \pi_0$.  But
	\[
	c_0 - \log\Bigl(\frac{\pi_1}{\pi_0}\Bigr) = \frac{\Delta^2 \rho(1-\rho)}{1-2\rho}  \Bigl\{\log\Bigl(\frac{(1-2\rho)\pi_1 + \rho}{(1-2\rho)\pi_0 + \rho} \Bigr) - \frac{(\pi_{1} - \pi_{0})(1-2\rho)}{2\{(1-2\rho)^{2} \pi_1 \pi_{0} + \rho(1-\rho)\}} \Bigr\} - \chi(\pi_1),
	\]
	so the final claim follows.
\end{proof} 

\section*{Acknowledgements}
The authors would like to thank Jinchi Lv for introducing us to this topic, and the anonymous reviewers for helpful and constructive comments. The second author is partly supported by NSF CAREER Award DMS-1150318, and the third author is supported by an EPSRC Fellowship EP/P031447/1 and grant RG81761 from the Leverhulme Trust.  The authors would like to thank the Isaac Newton Institute for Mathematical Sciences for support and hospitality during the programme `Statistical Scalability' when work on this paper was undertaken.  This work was supported by Engineering and Physical Sciences Research Council grant numbers EP/K032208/1 and EP/R014604/1.


\begin{thebibliography}{}
	\bibitem[{Angluin \& Laird(1988)}]{Angluin:1988} Angluin, D. \& Laird, P. (1988) Learning from noisy examples. 
	\newblock \textit{Mach. Learn.}, \textbf{2}, 343--370. 
	
	\bibitem[{Audibert \& Tsybakov(2007)}]{Audibert:2007} Audibert, J.-Y. \&  Tsybakov, A.~B. (2007) Fast learning rates for plug-in classifiers.
	\newblock \textit{Ann. Statist.}, \textbf{35},  608--633.
	
	\bibitem[{Awasthi et~al.(2015)}]{Awasthi:2015} Awasthi, P., Balcan, M.-F., Urner, R. \& Haghtalab, N. (2015) Efficient learning of linear separators under bounded noise. 
	\newblock \textit{Proc. Mach. Learn. Res.}, \textbf{4}, 167--190.
	
	\bibitem[{Biau et~al.(2010)Biau, C{\'e}rou \& Guyader}]{Biau:2010} Biau, G., C{\'e}rou, F. \& Guyader, A. (2010) On the rate of convergence of the bagged nearest neighbor estimate.
	\newblock \textit{J. Mach. Learn. Res.}, \textbf{11},  687--712.
	
	\bibitem[{Blanchard et~al.(2008)}]{Blanchard:2008} Blanchard, G, Bousquet, O. \& Massart, P. (2008) Statistical performance of support vector machines. \newblock \newblock \textit{Ann. Statist.}, \textbf{36},  489--531.
	
	\bibitem[{Blanchard et~al.(2016)}]{Blanchard:2016} Blanchard, G., Flaska, M., Handy, G., Pozzi, S. \& Scott, C. (2016) Classification with asymmetric label noise: consistency and maximal denoising. \newblock \textit{Electron. J. Statist}, \textbf{10}, 2780--2824.
	
	\bibitem[{Bolton \& Hand(2002)}]{Bolton:2002} Bolton, R.~J. \& Hand, D.~J. (2002) Statistical fraud detection: a review. \newblock \textit{Statistical Science (with discussion)}, \textbf{17}, 235--255. 
	
	\bibitem[{Bootkrajang(2016)}]{Bootkrajang:2016} Bootkrajang, J. (2016) A generalised label noise model for classification in the presence of annotation errors. \newblock \textit{Neurocomputing}, \textbf{192}, 61--71.
	
	\bibitem[{Bootkrajang \& Kab\'{a}n(2012)}]{Bootkrajang:2012} Bootkrajang, J. \& Kab\'{a}n, A. (2012) Label-noise robust logistic regression and its applications. \newblock In: \textit{Machine Learning and Knowledge Discovery in Databases}, (Eds: Flach, P.~A., De Bie, T. \& Cristianini, N.), Springer Berlin Heidelberg, Berlin, \textbf{1}, 143--158.
	
	\bibitem[{Bootkrajang \& Kab\'{a}n(2014)}]{Bootkrajang:2014} Bootkrajang, J. \& Kab\'{a}n, A. (2014) Learning kernel logistic regression in the presence of label noise. \newblock \textit{Pattern Rocognition}, \textbf{47}, 3641--3655.
	
	\bibitem[{Bylander(1997)}]{Bylander:1997} Bylander, T. (1997) Learning probabilistically consistent linear threshold functions. \newblock \textit{COLT}, 62-71.
	
	\bibitem[{Cannings et~al.(2018)}] {CBS:2017} Cannings, T. I., Berrett, T. B. \& Samworth, R. J. (2018) Local nearest neighbour classification with applications to semi-supervised learning. \newblock \textit{ArXiv e-prints}, 1704.00642.
	
	\bibitem[{Celisse \& Mary-Huard(2018)}]{Celisse:2018}Celisse, A. \& Mary-Huard, T. (2018) Theoretical analysis of cross-validation for estimating the risk of the $k$-nearest neighbor classifier.
	\newblock \textit{J. Mach. Learn. Res.}, \textbf{19} 1--54.
	
	\bibitem[{Chaudhuri \& Dasgupta(2014)}]{Chaudhuri:2014} Chaudhuri, K. \& Dasgupta, S. (2014) Rates of convergence for nearest neighbor classification.
	\newblock \emph{Advances in Neural Information Processing Systems}, \textbf{27}, 3437--3445.
	
	\bibitem[{Chen et~al.(2015)}]{Chen:2015} Chen, X., Lin, Q. \& Zhou, D. (2015) Statistical decision making for optimal budget allocation in crowd labeling. 
	\newblock \textit{J. Mach. Learn. Res.} \textbf{16}, 1--46. 
	
	\bibitem[{Cheng et~al.(2017)}]{Cheng:2017} Cheng, J., Liu, T., Ramamohanarao, K. \& Tao, D. (2017) Learning with bounded instance- and label-dependent label noise. 
	\newblock \textit{ArXiv e-prints}, 1709.03768.
	
	\bibitem[{Cortes \& Vapnik(1995)}]{Cortes:1995} Cortes, C.  \& Vapnik, V. (1995) Support vector networks. \newblock \emph{Machine Learning}, \textbf{20}, 273--297.
	
	\bibitem[{Devroye et~al.(1996)}]{PTPR:1996} Devroye, L.,  Gy\"{o}rfi, L. \& Lugosi, G. (1996) \emph{A Probabilistic Theory of Pattern Recognition.} 
	\newblock Springer, New York.
	
	\bibitem[{Fix \& Hodges(1951)}]{Fix:1951} Fix, E. \&  Hodges, J.~L. (1951) Discriminatory analysis -- nonparametric discrimination: Consistency properties.
	\newblock Technical Report number 4, USAF School of Aviation Medicine, Randolph Field, Texas.
	
	\bibitem[{Fix \& Hodges(1989)}]{Fix:1989} Fix, E. \& Hodges, J.~L. (1989) Discriminatory analysis -- nonparametric discrimination: Consistency properties.
	\newblock \emph{Internat. Statist. Rev.}, \textbf{57}, 238--247.
	
	\bibitem[{Fr\'{e}nay \& Kab\'{a}n(2014)}] {Frenay:2014a} Fr\'{e}nay, B. \& Kab\'{a}n, A. (2014) A comprehensive introduction to label noise. 
	\newblock \textit{Proc. Euro. Sym. Artificial Neural Networks}, 667--676.
	
	\bibitem[{Fr\'{e}nay \& Verleysen(2014)}] {Frenay:2014b} Fr\'{e}nay, B. \& Verleysen, M. (2014) Classification in the presence of label noise: a survey.
	\newblock \textit{IEEE Trans. on NN and Learn. Sys.}, \textbf{25},  845--869.
	
	\bibitem[{Gadat et~al.(2016)Gadat, Klein \& Marteau}]{Gadat:16} Gadat, S., Klein, T. \& Marteau, C. (2016) Classification with the nearest neighbour rule in general finite dimensional spaces.
	\newblock \textit{Ann. Statist.}, \textbf{44}, 982--1001.
	
	\bibitem[{Ghosh et~al.(2015)}]{Ghosh:2015} Ghosh, A., Manwani, N. \& Sastry, P.~S. (2015) Making risk minimization tolerant to label noise. \newblock \emph{Neurocomputing}, \textbf{160}, 93--107.
	
	\bibitem[{Hall et~al.(2008)Hall, Park \& Samworth}]{Hall:2008} Hall, P., Park, B.~U. \& Samworth, R.~J. (2008) Choice of neighbour order in nearest-neighbour classification.
	\newblock \textit{Ann. Statist.}, \textbf{36},  2135--2152.
	
	\bibitem[{Inouye et~al.(2017)}]{Inouye:2017} Inouye, D.~I., Ravikumar, P., Das, P. \& Dutta, A. (2017) Hyperparameter selection under localized label
	noise via corrupt validation. \newblock \textit{NIPS 2017}.
	
	\bibitem[{Kulkarni \& Posner(1995)}]{Kulkarni:1995}Kulkarni, S. R. \& Posner, S. E. (1995) Rates of convergence of nearest neighbor estimation under arbitrary sampling.
	\newblock \emph{IEEE Trans. Info. Th.}, \textbf{41}, 1028--1039.
	
	\bibitem[{Lara \& Labrador(2013)}]{Lara:2013} Lara, \'{O}.~D.  \& Labrador, M.~A. (2013) A survey on human activity recognition using wearable sensors. \newblock \textit{IEEE Comm. Sur. and Tutor.}, \textbf{15}, 1192--1209.
	
	\bibitem[{Lachenbruch(1966)}]{Lachenbruch:1966} Lachenbruch, P.~A. (1966) Discriminant analysis when the initial samples are misclassified. \textit{Technometrics}, \textbf{8}, 657--662.
	
	\bibitem[{Lachenbruch(1974)}]{Lachenbruch:1974} Lachenbruch, P.~A. (1974) Discriminant analysis when the initial samples are misclassified ii: Non-random misclassification models. \textit{Technometrics}, \textbf{16}, 419--424.
	
	\bibitem[{Li et~al.(2017)}]{Li:2017} Li, Y., Yang, J., Song, Y., Cao, L., Luo, J. \& Li, L.-J. (2017) Learning from noisy labels with distillation. 
	\newblock \textit{IEEE Intern. Conf. Comp. Vis.}, 1910--1918. 
	
	\bibitem[{Lin(1999)}]{Lin:1999} Lin, Y. (1999) Support vector machines and the Bayes rule in classification. \newblock \textit{Dept. of Statist., U. Wisconsin}, Technical report No. 1014. Available at \url{https://pdfs.semanticscholar.org/8b78/66d7d1e8fb87eb3a061bdedf8c7840947f0d.pdf}
	
	\bibitem[{Liu \& Tao(2016)}]{Liu:2016} Liu, T. \& Tao, D. (2016) Classification with noisy labels by importance reweighting. 
	\newblock \textit{IEEE Trans. Pattern Anal. and Mach. Int.}, \textbf{38}, 447--461.
	
	\bibitem[{Long \& Servedio(2010)}]{Long:2010} Long, P.~M. \& Servedio, R.~A. (2010) Random classification noise defeats all convex potential boosters. \newblock \textit{Mach. Learn.}, \textbf{78}, 287-304. 
	
	\bibitem[{Lugosi(1992)}]{Lugosi:1992} Lugosi, G. (1992) Learning with an unreliable teacher. 
	\newblock \textit{Pattern Recognition}, \textbf{25}, 79--87.
	
	\bibitem[{Manwani \& Sastry(2013)}]{Manwani:2013} Manwani, N. \& Sastry, P~S. (2013) Noise tolerance under risk minimization. \newblock \textit{IEEE Trans. on Cybernetics}, \textbf{43}, 1146--1151.   
	
	\bibitem[{Menon et~al.(2016)}]{Menon:2016} Menon, A.~K., van Rooyen, B. \& Natarajan, N. (2016) Learning from binary labels with instance-dependent corruption. \newblock \textit{ArXiv e-prints},  1605.00751.
	
	\bibitem[{Natarajan et~al.(2013)}]{Natarajan:2013} Natarajan, N., Dhillon, I.~S., Ravikumar, P.~K. \& Tewari, A. (2013) Learning with noisy labels. \newblock \textit{NIPS}, \textbf{26}, 1196--1204. 
	
	\bibitem[{Northcutt et~al.(2017)}]{Northcutt:2017} Northcutt, C.~G., Wu, T. \& Chuang, I.~L. (2017) Learning with confident examples: Rank pruning for robust classification with noisy labels. 
	\newblock \textit{Uncertainty in Artificial Intelligence 2017}, ArXiv:1705.01936.
	
	\bibitem[{Okamoto \& Nobuhiro(1997)}]{Okamoto:1997} Okamoto, S. \& Nobuhiro, Y. (1997) An average-case analysis of the $k$-nearest neighbor classifier for noisy domains.
	\newblock in \textit{Proc. 15th Int. Joint Conf. Artif. Intell.}, \textbf{1}, 238--243.
	
	\bibitem[{Patrini et al.(2016)}]{Patrini:2016} Patrini, G., Nielsen, F., Nock, R. \& Carioni, M. (2016) Loss factorization, weakly supervised learning and label noise robustness. 
	\newblock \textit{ICML 2016}, 708--717.
	
	\bibitem[{Patrini et~al.(2017)}]{Patrini:2017} Patrini, G., Rozza, A., Menon, A.~K., Nock, R. \& Qu, L. (2017) Making deep neural networks robust to label noise: a loss correction approach. \newblock \textit{IEEE Conf. Comp. Vis. \& Patt. Recog.}, 1944--1952. 
	
	\bibitem[{Rolnick et~al.(2017)}]{Rolnick:2017} Rolnick, D., Veit, A., Belongie, S. \& Shavit, N. (2017) Deep learning is robust to massive label noise. \newblock \textit{ArXiv e-prints}, 1705.10694.
	
	\bibitem[{Samworth(2012)}]{Samworth:2012} Samworth, R.~J. (2012) Optimal weighted nearest neighbour classifiers.
	\newblock \textit{Ann. Statist.}, \textbf{40},  2733--2763.
	
	\bibitem[{Sch\"{o}lkopf et~al.(2001)}]{Scholkopf:2001} Sch\"{o}lkopf, B., Herbrich, R. \& Smola, A.~J. (2001) A generalized representer theorem. \newblock In \textit{Proc. 14th Annual Conf. Computational Language Theory}, \textbf{2111}, 416--426.
	
	\bibitem[{Scott et~al.(2013)}]{Scott:2013} Scott, C., Blanchard, G. \& Handy, G. (2013) Classification with asymmetric label noise: consistency and maximal denoising. \newblock \textit{JMLR: W\&CP}, \textbf{30}, 1-23.
		
	\bibitem[{Shorack \& Wellner(1986)}]{Shorack:86} Shorack, G. R. \& Wellner, J. A. (1986). \emph{Empirical Processes with Applications to Statistics}.  Wiley, New York.
		
	\bibitem[{Steinwart(2005)}]{Steinwart:2005} Steinwart, I. (2005) Consistency of support vector machines and other regularized kernel classifiers. \newblock \textit{IEEE Trans. Inf. Th.}, \textbf{51}, 128--142.
	
	\bibitem[{Steinwart \& Christmann(2008)}]{Steinwart:2008} Steinwart, I. \& Christmann, A. (2008) \emph{Support Vector Machines.} \newblock Springer, New York. 
	
	\bibitem[{Steinwart \& Scovel(2007)}]{Steinwart:2007} Steinwart, I. \& Scovel, C. (2007) Fast rates for support vector machines using Gaussian kernels. \newblock \emph{Ann. Statist.}, \textbf{35}, 575--607.
	
	\bibitem[{Stempfel \& Ralaivola(2009)}]{Stempfel:2009} Stempfel, G. \& Ralaivola, L. (2009) Learning SVMs from sloppily labeled data. \newblock In \textit{Proc. of the 19th Int. Conf. Artificial Neural Networks}, \textbf{1}, 884--893.
	
	\bibitem[{Stone(1977)}]{Stone:1977} Stone, C.~J. (1977) Consistent nonparametric regression. \textit{Ann. Statist.}, \textbf{5}, 595--620.
	
	\bibitem[{van Rooyen et~al.(2015)}]{vanRooyen:2015a} van Rooyen, B., Menon, A.~K. \& Williamson, R.~C. (2015) Learning with symmetric label noise: the importance of being unhinged. \newblock \textit{NIPS}, \textbf{28}, 10--18. 
	
	\bibitem[{Wilson(1972)}]{Wilson:1972} Wilson, D.~L. (1972) Asymptotic properties of nearest neighbour rules using edited data. \newblock \textit{IEEE Trans. on Sys., Man., Cybern.}, \textbf{2}, 408--421.
	
	\bibitem[{Wilson \& Martinez(2000)}]{Wilson:2000} Wilson, D.~L. \& Martinez, T.~R. (2000) Reduction techniques for instance based learning algorithms.
	\newblock \textit{Mach. Learn.}, \textbf{38}, 257--286.
	
	\bibitem[{Wright et~al.(2015)}]{Wright:2015}  Wright, C.~F., Fitzgerald, T.~W., Jones, W.~D., Clayton, S., McRae, J.~F., van Kogelenberg, M., King, D.~ A., Ambridge, K., Barrett, D.~M., Bayzetinova, T., Bevan, A.~P., Bragin, E., Chatzimichali, E.~A., Gribble, S., Jones, P., Krishnappa, N., Mason, L.~E., Miller, R., Morley, K.~I., Parthiban, V., Prigmore, E., Rajan, D., Sifrim, A., Swaminathan, G.~J., Tivey, A.~R., Middleton, A., Parker, M., Carter, N.~P., Barrett, J.~C., Hurles, M.~E., Fitzpatrick, D.~R. \& Firth, H.~V. (2015)  Genetic diagnosis of developmental disorders in the DDD study: a scalable analysis of genome-wide research data. \newblock \textit{The Lancet}, \textbf{385},  1305--1314.
	
	\bibitem[{Zhang et~al.(2016)}]{Zhang:2016} Zhang, Y., Chen, X., Zhou, D. \& Jordan, M.~I. (2016) Spectral methods meet EM: A provably optimal algorithm for crowdsourcing. 
	\newblock \textit{J. Mach. Learn. Res.} \textbf{17}, 1--44. 
	
\end{thebibliography}
\end{document}